\def\@rmrk#1#2{\refstepcounter
    {#1}\@ifnextchar[{\@yrmrk{#1}{#2}}{\@xrmrk{#1}{#2}}}
\makeatletter\@addtoreset{equation}{section}\makeatother
 \newfont{\bfit}{cmbxti10 scaled 1200}
\renewcommand{\d}{{\rm d}}
 \newcommand{\e}{{\rm e} }
 \newcommand{\eps}{\varepsilon}
 \newcommand{\R}{\mathbb{R}}
 \newcommand{\N}{\mathbb{N}}
 \newcommand{\Z}{\mathbb{Z}}
 \newcommand{\E}{\mathbf{E}}
 \renewcommand{\P}{\mathbf{P}}
 \newcommand{\bE}{\mathbb{E}}
 \newcommand{\bP}{\mathbb{P}}
 \newcommand{\bQ}{\mathbb{Q}}
 \newcommand{\hP}{\widehat{\mathscr {M}}}
 \def\1{{\mathchoice {1\mskip-4mu\mathrm l} 
{1\mskip-4mu\mathrm l}
{1\mskip-4.5mu\mathrm l} {1\mskip-5mu\mathrm l}}}
 \newcommand{\Mcal}{{\mathcal M}}
\newcommand{\weak}{{\Rightarrow}}
\newcommand{\X}{{\widetilde{\mathcal{X}}} }
\newcommand{\ssup}[1] {{\scriptscriptstyle{({#1}})}}
\renewcommand{\subsection}{\secdef \subsct\sbsect}
\newcommand{\subsct}[2][default]{\refstepcounter{subsection}
\vspace{0.15cm}
{\flushleft\bf \arabic{section}.\arabic{subsection}~\bf #1  }
\nopagebreak\nopagebreak}
\newcommand{\sbsect}[1]{\vspace{0.1cm}\noindent
{\bf #1}\vspace{0.1cm}}
\newtheorem{theorem}{Theorem}[section]
\newtheorem{lemma}[theorem]{Lemma}
\newtheorem{cor}[theorem]{Corollary}
\newtheorem{prop}[theorem]{Proposition}
\newtheoremstyle{thm}{1.5ex}{1.5ex}{\itshape\rmfamily}{}
{\bfseries\rmfamily}{}{2ex}{}
\newtheoremstyle{rem}{1.3ex}{1.3ex}{\rmfamily}{}
{\itshape\rmfamily}{}{1.5ex}{}
\theoremstyle{rem}
\newtheorem{remark}{{\slshape\sffamily Remark}}[]
\def\thebibliography#1{\section*{References}
  \list%
  {\arabic{enumi}.}%                          {\star}{\star}{\star} style of reference number {\star}{\star}{\star}
    {\settowidth\labelwidth{[#1]}\leftmargin\labelwidth
    \advance\leftmargin\labelsep
    \parsep0pt\itemsep0pt
    \usecounter{enumi}}
    \def\newblock{\hskip .11em plus .33em minus .07em}
    \sloppy                   % \clubpenalty4000\widowpenalty4000
    \sfcode`\.=1000\relax}
\begin{document}
%%%%%%%%%%%%%%%%%%%%%%%%%%%%%%%%%%%%%%%%%%%%%%%
\title[Localization of the Gaussian multiplicative chaos in the Wiener space]
{\large Localization of the Gaussian multiplicative chaos in the Wiener space and the stochastic heat equation in strong disorder}
\author[Yannic Br\"oker and Chiranjib Mukherjee]{}
\maketitle
%emark\thispagestyle{empty}
\vspace{-0.5cm}

\centerline{\sc Yannic Br\"oker\footnote{University of M\"unster, Einsteinstrasse 62, M\"unster 48149, Germany, {\tt yannic.broeker@uni-muenster.de}} and 
 Chiranjib Mukherjee\footnote{University of M\"unster, Einsteinstrasse 62, M\"unster 48149, Germany,  {\tt chiranjib.mukherjee@uni-muenster.de}}}
%S. R. S. Varadhan, \footnote{Courant Institute of Mathematical Sciences, 251 Mercer Street, New York 10012, {\tt varadhan@cims.nyu.edu}. 

%\renewcommand{\thefootnote}{\fnsymbol{footnote}}

\renewcommand{\thefootnote}{}
\footnote{\textit{AMS Subject
Classification:} 60K35, 60J65, 60J55, 60F10, 35R60, 35Q82, 60H15, 82D60}
\footnote{\textit{Keywords:} Gaussian multiplicative chaos, supercritical, renormalization, glassy phase, freezing, stochastic heat equation, strong disorder, asymptotic pure atomicity, translation-invariant compactification} 

\vspace{-0.5cm}
\centerline{\textit{University of M\"unster}}
\vspace{0.2cm}

\begin{center}
\today
\end{center}

\renewcommand{\thefootnote}{\fnsymbol{footnote}}

\begin{quote}{\small {\bf Abstract: }
We consider %the stochastic heat equation (SHE) on $\R^d$ with multiplicative space-time white noise smoothened in space. 
%In this model, the Feynman-Kac representation of the solution defines an underlying 
a {\it{Gaussian multiplicative chaos}} (GMC) measure on the classical Wiener space driven by a smoothened (Gaussian) space-time white noise. 
For $d\geq 3$ it was shown in \cite{MSZ16} that for small noise intensity, the total mass of the GMC converges to a strictly positive random variable, while larger disorder strength (i.e., low temperature) forces 
the total mass to lose uniform integrability, eventually producing a vanishing limit. 
Inspired by strong localization phenomena for log-correlated Gaussian fields and Gaussian multiplicative chaos in the finite dimensional Euclidean spaces (\cite{MRV16,BL18}),
and related results for discrete directed polymers (\cite{V07,BC16}), 
we study the endpoint distribution of a Brownian path under the {\it{renormalized}} GMC measure in this setting.
We show that in the low temperature regime, the energy landscape of the system freezes and enters the so called {\it{glassy phase}} as the entire mass of the Ces\`aro average of the endpoint GMC distribution
stays localized in few spatial islands, forcing the endpoint GMC to be {\it{asymptotically purely atomic}} (\cite{V07}). 
The method of our proof is based on the translation-invariant compactification introduced in \cite{MV14} and 
a fixed point approach related to the cavity method from spin glasses recently used in \cite{BC16} in the context of the directed polymer model in the lattice.}
\end{quote}

\section{Introduction and the main result.}

\subsection{Motivation.}

Let $\Omega$ be a metric space which is endowed with a finite measure $\mu$. Consider the tilted random measure of the form 
\begin{equation}\label{GMC}
\mathscr M_\beta(\d\omega)= \mathscr M_{\beta,\mathscr H}(\d\omega)= \exp\big\{\beta \mathscr H(\omega)- \frac 12 \beta^2 \E[\mathscr H(\omega)^2]\big\} \,\, \mu(\d\omega)
\end{equation}
where $\beta>0$ is a parameter and $\{\mathscr H(\omega)\}_{\omega\in \Omega}$ is a centered Gaussian field defined on a complete probability space $(\mathcal E, \mathcal F, \P)$. 
The theory of {\it{Gaussian multiplicative chaos}} (GMC), whose idea was first propounded by Kahane (\cite{K85}), is the generalization of \eqref{GMC} to the setting when the 
random field $\{\mathscr H(\omega)\}$ lives on the space of distributions, that is, they are defined via a family of integrals w.r.t. a suitable class of test functions.

One of the crucial properties of the GMC is captured by the following simple comparison principle which was also discovered by Kahane (\cite{K85}). If $\{\mathscr H(\omega)\}$ and $\{\mathscr G(\omega)\}$ are two continuous Gaussian fields such that $\E[\mathscr G (\omega_1) \mathscr G (\omega_2)] \leq \E[\mathscr H(\omega_1)\, \mathscr H(\omega_2)]$ for all $\omega_1,\omega_2\in \Omega$, then for any concave function $F:\R_+\to \R$ with at most polynomial growth at infinity, 
\begin{equation}\label{comparison}
\E[F(\mathscr Z_{\beta,\mathscr G})] \geq \E[F(\mathscr Z_{\beta,\mathscr H})] \qquad \mbox{where}\quad \mathscr Z_{\beta,\cdot}=\int_\Omega \mathscr M_{\beta,\cdot}(\d\omega).
\end{equation} 

In the finite dimensional setting, GMC measures share close connection to the two-dimensional Liouville quantum gravity (\cite{DS11}) and 
its studies have seen a lot of revived  interest in the recent years. In this setting, the relevant measures are defined as $ M_{\beta,T}(\d x): = \e^{-\beta^2 T/2} \e^{\beta X_T(x)} \d x$ where $D$ is a domain in $\R^d$, $\d x$ stands for the Lebesgue measure 
and the ambient Gaussian field $(X_T(x))_{x\in D}$ is {\it{log-correlated or star-scale invariant}} after a suitable cut-off regularization at level $T$. A rigorous construction of the limiting 
measure $\lim_{T\to\infty} M_{\beta,T}$ has been carried out using a martingale approximation (\cite{K85})
and it is well known that when $\beta < \sqrt{2d}$, 
$M_{\beta,T}$ converges as $T\to\infty$ toward a nontrivial measure $M_\beta$ which is diffuse and is known as the {\it{subcritical GMC}}, while for $\beta\geq\sqrt{2d}$, $M_{\beta,T}$ converges to $0$ as $T\to\infty$. 
In this setting (i.e., for log-correlated fields in $\R^d$), a rigorous construction of the subcritical GMC measure also follows from a stable mollification procedure (see \cite{RV10,DS11,B17}). 
Alternatively, a subcritical GMC in a general setting is also characterized by requiring that 
$\mathscr M_{\beta,\mathscr H+ v}(\d\omega)=\e^{v(\omega)} \, \mathscr M_{\beta,\mathscr H}(\d\omega)$
for every Cameron-Martin vector $v$ for the Gaussian field $\mathscr H$, that is,  for all deterministic $v:\Omega\to \R$ such that the law of $\mathscr H+v$ is absolutely continuous w.r.t. that of $\mathscr H$ (see \cite{S14}).

In the finite dimensional setting, the regime $\beta> \sqrt{2d}$ corresponds to the so-called {\it{supercritical phase}} of the GMC measures, which 
has also received much attention in the physics literature (see \cite{M74,DS88} for questions on dyadic trees and \cite{CLD01,FB08,FLDR09} for log-correlated fields). Heuristically speaking, 
in this regime, one expects the energy landscape of the underlying Gaussian field to {\it{freeze}} and enter a {\it{glassy phase}}. On a rigorous level, 
for log-correlated or star-scale invariant Gaussian fields in the Euclidean set up, this has been justified rigorously in \cite{MRV16} (see also \cite{BL18} for similar
 results for discrete $2d$ Gaussian free field). In particular, it was  shown that for $\beta>\sqrt{2d}$ and 
for suitable constants $\lambda_1(\beta),\lambda_2(\beta)>0$, the {\it{renormalized}} GMC measure 
$$
\e^{\lambda_1(\beta) \log t+ \lambda_2(\beta) t} M_{\beta,t}
$$
in the limit $t\to\infty$ is supported only on atoms.\footnote[7]{In fact, in the literature cited above, it is shown that, for $\beta>\sqrt{2d}$, the GMC measure $M_{\beta,t}$ concentrates its mass 
only on sites close to {\it{centered maximum}} $\sup_{x\in D} [X_t(x)- \sqrt{2d} t]$ of the field 
and consequently, the limiting measure is described as a Poisson measure with (random) intensity given by the {\it{derivative martingale}} or the {\it{critical GMC}} at $\beta=\sqrt{2d}$ 
whose construction was rigorously carried out in (\cite{DRSV14-I,DRSV14-II}).} 

Quite naturally, the above results inspire questions concerning the behavior of supercritical GMC in the {\it{infinite dimensional setting}}, which have not been explored to the best of our knowledge. 
In the present context, we drop all assumptions regarding log-correlations or star-scale invariance of the underlying field and 
consider a GMC measure of the form \eqref{GMC} on a noncompact metric space. In this setting we show that
 when the temperature is low, the limiting measures are also supported only on atoms as the cut-off level is sent off to infinity. 
We now turn to a precise mathematical layout of the problem. 

We fix any spatial dimension $d \geq 1$ and set $\Omega= C([0,\infty);\R^d)$  to be the metric space of continuous functions endowed with the topology of uniform convergence of compact subsets.
$\Omega$ is tacitly equipped with the Wiener measure $\bP_x$ corresponding to a $\R^d$-valued Brownian motion starting at $x\in \R^d$.
We denote by $B$ a cylindrical Wiener process and by $\dot B$ a Gaussian space-time white noise which is independent of the path $W$. In other words, for any Schwartz function $\varphi\in \mathcal S(\R_+\times \R^d)$, $\dot B(\varphi)$ is a Gaussian random variable on a fixed  probability space $(\mathcal E,\mathcal F, \mathbf P)$ %by $\vphi \in \cS(\R_+ \times \rd)$,  which is Gaussian 
with mean $0$ and covariance $\mathbf E[ \dot B(\varphi_1)\,\, \dot B(\varphi_2)]= \int_0^\infty \int_{\R^d} \varphi_1(t,x) \varphi_2(t,x) \d x \d t$. Throughout the rest of the article,
 $\E$ will denote expectation w.r.t. $\P$. We also fix a nonnegative function $\phi$ which is smooth, spherically symmetric and is supported in a ball $B_{1/2}(0)$ of radius $1/2$ around $0$ and normalized to have total mass $\int_{\R^d} \phi(x)\d x=1$. Then we have a (spatially convolved white noise) Gaussian field $\{\mathscr H_T(W)\}_{W\in\Omega}$ at level $T$, defined as
\begin{equation}\label{mathscr-H}
% B_{t,\e}(x) = B_t\left( \phi_\e(x -\cdot)\right)\;\qquad \mbox{with}\,\,\,\phi_\e(\cdot)=\e^{-d}\phi(\cdot/\e)\;,
\mathscr H_T(W)= \mathscr H_T(W,\dot B)=\int_0^T \int_{\R^d} \, \phi(W_{s}-y) \, \dot B(s, y)\, \d y\,\, \d s. 
\end{equation}
The corresponding tilted measure 
\begin{equation} \label{eq:M}
\begin{aligned}
&\mathscr M_{\beta,T}(\d W)=  \exp\left\{\beta \mathscr H_T(W) - \frac{\beta^2T}{2}(\phi\star\phi)(0)\right\} \bP_0(\d W)
\end{aligned}
\end{equation}
is then readily interpreted as a Gaussian multiplicative chaos indexed by Wiener paths (recall \eqref{GMC}). It has covariance kernel 
\begin{equation}\label{covariance-comparison}
\begin{aligned}
\E\big[\mathscr H_T(W^{\ssup 1})\,\mathscr H_T(W^{\ssup 2})\big]%K_T(W^{\ssup 1},W^{\ssup 2})
&= \int_0^T\d s \int_{\R^d} \d y\,\,\phi(W_s^{\ssup 1}-y)\,\phi(W^{\ssup 2}_s-y)\\
&=\int_0^T \d s\,\,(\phi\star\phi)(W^{\ssup 1}_s-W^{\ssup 2}_s) \leq T (\phi\star \phi)(0).
\end{aligned}
\end{equation} 
If we denote the total mass by $\mathscr Z_{\beta,T} = \int_\Omega \mathscr M_{\beta,T}(\d W)$, we also have the renormalized GMC measure 
\begin{equation}\label{polymer-measure}
\begin{aligned}
\hP_{\beta,T}(\d W)&= \frac 1 {\mathscr Z_{\beta,T}} \mathscr M_{\beta,T}(\d W).
%&=\frac{1}{Z_{\beta,t}}\exp\left\{\beta\int_0^t\int_{\R^d}\phi(y-W_s)\dot{B}(s,y)\mathrm{d}y\, \d s\right\}\,\, \bP_0(\d W).
\end{aligned}
\end{equation}
Using Kahane's comparison inequality (recall \eqref{comparison}) and the domination of the kernels \eqref{covariance-comparison}, it was also shown
 in (\cite{MSZ16}) that, in $d\geq 3$, there exists $\beta_c\in (0,\infty)$ so that for $\beta<\beta_c$, the total mass $\mathscr Z_{\beta,T}=\int_\Omega \mathscr M_{\beta,T}(\d W)$ of the GMC 
converges in probability to a strictly positive random variable, while for $\beta>\beta_c$, $\mathscr Z_{\beta,T}$ ceases to be uniformly integrable and eventually collapses to zero as $T\to\infty$. 
In the present context, the main result of our article shows that, loosely speaking, 
when the temperature is sufficiently low, in particular when $\lim_{T\to\infty} \mathscr Z_{\beta,T}=0$,
the renormalized GMC measure $\hP_{\beta,T}(\d W)=  {\mathscr Z_{\beta,T}}^{-1} \mathscr M_{\beta,T}(\d W)$ has no asymptotic disintegration of mass--its {\it{entire mass}} is preserved and 
accumulated in few randomly located islands in $\R^d$.
Given the above discussion pertaining to low-temperature localization of GMC in finite dimensions (e.g., \cite{MRV16}),
the present result is then a contribution toward a rigorous understanding of 
{\it{atomic (or supercritical)}} GMC  in the infinite dimensional setting. We turn to a precise statement of our main result. 

  \subsection{The result.} 
 
 We set 
  \begin{equation}\label{beta_1}
% \beta_1=\inf\big\{\beta>0\colon \Lambda(\beta)>0\big\} \qquad\mbox{where}\qquad 
 \Lambda(\beta)=- \lim_{T\to\infty} \frac 1 T \E[\log \mathscr Z_{\beta,T}]
 \end{equation}
where $\mathscr Z_{\beta,T}=\int \mathscr M_{\beta,T}(\d W)$. It is easy to see via a subadditivity argument that the above limit always exists and Jensen's inequality together with the fact that $\E[\exp\{\beta \mathscr H_T(W,B)\}]=\exp\{\frac{\beta^2T}{2}(\phi\star\phi)(0)\}$ forces it to be nonnegative. Furthermore, 
the map $\beta\mapsto \Lambda(\beta)$ is monotone increasing and $\Lambda(\beta)>0$ also implies that $\lim_{T\to\infty} \mathscr Z_{\beta,T}=0$ almost surely; see Theorem \ref{Lyapunov}. 
For any $\eps, t >0$, we define the regions 
\begin{equation}\label{eq:U}
U_{t,\eps}= \{x\in \R^d\colon \bQ_{\beta,t}[B_1(x)] > c_0 \eps \} \qquad c_0=|B_1(0)|
\end{equation}
that carry uniformly positive density for the GMC endpoint 
\begin{equation}\label{polymer-endpoint}
\bQ_{\beta,t}= \hP_{\beta,t} \,\, W_t^{-1}. %\in \Mcal_1(\R^d) \,\,\,\mbox{and}\qquad 
\end{equation}

Here is our first main result. 
 
 \begin{theorem}[Pure atomicity of the GMC endpoint]\label{APA}
	Let $d\geq 1$ and fix $\beta>\beta_1:=\inf\{\beta>0\colon \Lambda(\beta)>0\} \in [0,\infty]$. 
	Then for any sequence $\eps_t\to 0$ with $t\rightarrow\infty$, 
	\begin{align}\label{eq-APA}
		\lim_{T\rightarrow\infty}\frac{1}{T}\int_0^T \bQ_{\beta,t}[U_{t,\eps_t}]\,\, \d t=1\qquad\P-\mbox{a.s.}
	\end{align}
	%\item 
%\end{itemize}
\end{theorem}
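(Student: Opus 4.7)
My plan is to adapt to the Wiener setting the fixed-point / cavity strategy that \cite{BC16} developed for discrete directed polymers, carried out inside the translation-invariant compactification $\X$ of $\skrip(\R^d)$ from \cite{MV14}. Let $\tilde\bQ_{\beta,t}\in\X$ be the image of the endpoint distribution $\bQ_{\beta,t}$ under the natural projection, and let $\mu_t\in\skrip(\X)$ denote its law (randomness averaged over $\dot B$). Since $\skrip(\X)$ is compact, the Ces\`aro averages $\bar\mu_T = \tfrac1T\int_0^T\mu_t\,\d t$ admit subsequential weak limits $\mu_\infty$. The goal is to show that $\mu_\infty$ is concentrated on orbits $\xi\in\X$ that are asymptotically purely atomic in the sense of \cite{V07}, i.e.\ $\xi(\{x\in\R^d : \xi(B_1(x))>c_0\eps\})\to 1$ as $\eps\to 0$; once this is done, an elementary diagonalization along the sequence $\eps_t\to 0$ and an upgrade from convergence of $\bar\mu_T$ to almost-sure Ces\`aro convergence will deliver \eqref{eq-APA}.

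The first main step is a continuous-time cavity identity. Splitting $\dot B$ into independent strips on $[0,s]$ and $[s,s+t]$ and invoking the Markov property of $W$, one expresses $\bQ_{\beta,s+t}$ as a mixture over the intermediate endpoint $W_s$ of conditioned GMC endpoint distributions in the second strip, with mixing weights proportional to $\bQ_{\beta,s}(\d y)$. Averaging over the first strip and descending to $\X$ turns this into a one-parameter tilted semigroup on $\skrip(\X)$ under which every Ces\`aro limit $\mu_\infty$ must be invariant; this is the continuous analogue of the fixed-point equation in \cite{BC16}.

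The second main step is a free-energy argument excluding diffuse fixed points. Suppose $\mu_\infty$ charged, with positive weight, orbits carrying a nontrivial diffuse component, i.e.\ satisfying $\xi(\{x:\xi(B_1(x))\leq c_0 \eps\})>\eta$ for some $\eta>0$ and every $\eps>0$. Using Kahane's comparison inequality \eqref{comparison} together with the kernel bound \eqref{covariance-comparison}, one checks that such a diffuse fraction contributes no exponential growth and therefore forces $\tfrac1t \E[\log\mathscr Z_{\beta,t}]\to 0$, contradicting the assumption $\beta>\beta_1$ and thus $\Lambda(\beta)>0$. Consequently $\mu_\infty$ is supported on purely atomic orbits, and the Ces\`aro invariance upgrades this to the quantitative form \eqref{eq-APA}.

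The main obstacle I anticipate is the fixed-point analysis on $\X$: unlike the lattice case of \cite{BC16}, where one manipulates atoms of the endpoint distribution directly, here atomicity must be read off from the translation-orbit structure of $\X$, and one has to verify that the cavity update is weakly continuous on $\skrip(\X)$ in spite of the mollification by $\phi$ in \eqref{mathscr-H} and the quotient by $\R^d$-translations. A secondary difficulty is promoting subsequential distributional convergence of $\bar\mu_T$ to the almost-sure statement in \eqref{eq-APA}; I would address this by combining the time-shift ergodicity of $\dot B$ with a Birkhoff-type argument applied to the functional $t\mapsto \bQ_{\beta,t}[U_{t,\eps_t}]$, so that the Ces\`aro average along the full sequence concentrates on the value predicted by the fixed-point analysis.
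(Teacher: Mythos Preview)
Your overall architecture---embed the endpoint distribution in the compactification $\X$ of \cite{MV14}, identify Ces\`aro limits as fixed points of a cavity semigroup, and rule out diffuse fixed points via the free energy---is exactly the route the paper takes, following \cite{BC16}. But two of your three main steps have genuine gaps as written.

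First, the almost-sure upgrade. You propose to pass from convergence of the \emph{annealed} Ces\`aro average $\bar\mu_T=\tfrac1T\int_0^T\mu_t\,\d t$ to the almost-sure statement \eqref{eq-APA} by ``time-shift ergodicity of $\dot B$ with a Birkhoff-type argument.'' This does not work: the process $t\mapsto\widetilde\bQ_{\beta,t}$ is not stationary under time shifts of the noise (the dependence on the noise on $[0,t]$ grows with $t$), so there is no ergodic theorem to invoke. The paper avoids this by working from the start with the \emph{random} empirical measure $\nu_T=\tfrac1T\int_0^T\delta_{\widetilde\bQ_t}\,\d t$ and proving $\mathscr W(\nu_T,\mathfrak m)\to 0$ $\P$-a.s.\ directly. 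The mechanism is a martingale estimate: for any Lipschitz $\ell$ on $\X$, the process $\int_0^T\big(\ell(\widetilde\bQ_{t+s})-\E[\ell(\widetilde\bQ_{t+s})\mid\mathcal F_t]\big)\,\d t$ has fourth moment $O(T^2)$ by Burkholder--Davis--Gundy, hence is $o(T)$ a.s., and Arzel\`a--Ascoli on $\mathrm{Lip}_1(0)$ makes this uniform in $\ell$. This replaces ergodicity entirely.

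Second, the free-energy step. Your proposal to use Kahane's comparison inequality \eqref{comparison} to show that ``a diffuse fraction contributes no exponential growth'' is not a workable mechanism; Kahane compares concave functionals under covariance domination and does not by itself relate diffuseness of the endpoint to the size of $\Lambda(\beta)$. What is actually needed---and what you are missing---is an explicit link between the free energy and a functional of $\widetilde\bQ_t$. The paper obtains this via It\^o's formula: $\log Z_T=M_T+\int_0^T\Phi(\widetilde\bQ_t)\,\d t$ with $M_T$ a square-integrable martingale and $\Phi(\xi)=\tfrac{\beta^2}{2}V(0)-\tfrac{\beta^2}{2}\sum_i\int V(x-y)\,\alpha_i(\d x)\alpha_i(\d y)$. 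This yields the variational formula $\lim_T\tfrac1T\log Z_T=\inf_{\vartheta\in\mathfrak m}\int\Phi\,\d\vartheta$ and lets one argue as follows: any fixed point $\vartheta\in\mathfrak m$ is supported on $\{\Psi\in\{0,1\}\}$ by strict concavity of $x\mapsto x/(x+c)$ (Jensen forces mass loss unless $\Psi$ is extremal); if $\Lambda(\beta)>0$ then $\delta_{\widetilde 0}$ cannot be the minimizer (since $\Phi(\widetilde 0)=\tfrac{\beta^2}{2}V(0)$ would give $\Lambda(\beta)=0$); and conditioning any minimizer on $\{\Psi=1\}$ strictly lowers $\int\Phi\,\d\vartheta$ because $\Phi(\xi)<\Phi(\widetilde 0)$ for $\xi\ne\widetilde 0$. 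Hence every minimizer satisfies $\vartheta[\Psi=1]=1$. This is the content that your Kahane sketch is standing in for, and it does not follow from comparison inequalities.

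Your anticipated ``main obstacle''---weak continuity of the cavity update on $\X$---is indeed a substantial piece of work in the paper (Section~\ref{section time dependence}), handled by a second-moment decoupling argument exploiting that $V$ is compactly supported, and you are right to flag it.
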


Note that Theorem \ref{APA} holds for any $d\geq 1$ as long as  $\Lambda(\beta)>0$. Furthermore, Theorem \ref{Lyapunov} shows that $\Lambda(\beta)>0$ implies $\lim_{T\to\infty} \mathscr Z_{\beta,T}=0$, 
and combined with Theorem \ref{thm 4.9.} it also shows that the latter convergence is in fact {\it{exponential}} $\mathscr Z_{\beta,T}=\e^{-T[\Lambda(\beta)+o(T)]}$, 
which contrasts the polynomial rate of the convergence of the (Gaussian) fluctuations of $\mathscr Z_{\beta,T}$ when $\beta$ is small; see \cite{CCM18}.

We refer to the interesting works (\cite{CSY03,V07,BC16,Ba17,C18,CC18}) where low temperature localization properties of discrete directed 
polymers have been extensively studied. In the lattice setting, in 
\cite{CSY03} the averages on the left-hand side in \eqref{eq-APA} were shown to be uniformly bounded below by a constant $c\in (0,1]$. The latter statement was later strengthened in 
\cite{V07} for heavy-tailed environments (i.e., when the logarithmic moment generating function
is infinity). Very recently, substantial progress was made when the latter statement was shown to be true in \cite{BC16} for polymers in the lattice even 
with finite exponential moments. We also remark that 
localization properties for polymers in the lattice setting can be efficiently studied by using the method of fractional moments introduced in (\cite{CSY03}).
In the continuous setting,  this method, however, seems to break down, and particularly 
 for Gaussian fields, techniques from GMC like comparison inequalities are well suited and efficient, as demonstrated in \cite{MSZ16}. 
 We refer to Section \ref{sec-comparison-proof} for a comparison of techniques of the proofs.

We mention that the GMC \eqref{eq:M} is also closely related to the multiplicative noise {\it{stochastic heat equation}} which is formally written as 
\begin{equation}\label{she-formal}
\d u_t= \frac 12 \Delta u_{t} \d t + \beta   u_{t}\,  \d B_{t}. %\;,\qquad\,\,   u_{0}(x) =f
\end{equation}
Although equation \eqref{she-formal} is a priori ill-posed, when $d=1$ substantial recent progress has been made in giving a rigorous meaning to its solution (\cite{BC95,BG97,SS10,ACQ11,H13,AKQ,GP17}, see also \cite{BC98,CSZ17} for the case $d=2$). It is natural to consider a regularized version 
\begin{equation}\label{she-intro}
\d u_{\eps,t} =\frac 12 \Delta u_{\eps,t} \d t + \beta(\eps,d)\,\,    u_{\eps,t} \d B_{\eps,t} \;,\qquad\,\,   u_{\eps,0}(x) =1,
\end{equation}
of \eqref{she-formal} by interpreting the above stochastic differential in the classical It\^o sense and considering the spatially mollified noise
$B_{\eps,t}(x) = \dot B \left( \varphi_{\eps,t,x}\right)$ with $\varphi_{\eps,t,x} (s,y) =  {\1}_{[0,t]}(s) \phi_\eps(y - x)$
and  $\phi_\eps(\cdot)=\eps^{-d}\,\phi(\frac{\cdot}\eps)$ being an approximation of the Dirac-delta. 
Clearly $B_{\eps,t}(x)$ is again a centered Gaussian process with covariance $\E[ B_{\eps,t}(x) B_{\eps,s}(y) ] = (s\wedge t) \big(\phi_\eps\star \phi_\eps\big)(x-y) = (s\wedge t) \, \eps^{-d} V((x-y)/\eps)$, where
\begin{equation}\label{eq:V}
V=\phi \star \phi
\end{equation}
is a smooth function supported in the unit ball $B_1(0)$ around the origin. Then
\begin{equation}\label{she-FK}
\begin{aligned}
u_{\eps,t}(x)%&= \mathbb E_x \bigg[ \exp\bigg\{ \beta(\eps) \mathscr H_t(W,B) - \frac {\beta(\eps)^2} 2 \mathbf E[\mathscr H_t(W,B)^2]\bigg\}\bigg] \\
&= \mathbb E_x \bigg[ \exp\bigg\{ \beta(\eps,d) \int_0^t \int_{\R^d} \, \phi_\eps(W_{t-s}-y) \, \dot B(s, y)\, \d y\,\, \d s -  \, \frac {t\beta(\eps,d)^2} 2 \,\eps^{-d}\, V(0)\bigg\}\bigg]
%\\
%& \mathscr H_t(W,B)= \int_0^t \int_{\R^d} \, \phi_\eps(W_{t-s}-y) \, \dot B(s, y)\, \d s\,\, \d y
\end{aligned}
\end{equation}
provides the renormalized Feynman-Kac solution to \eqref{she-intro}, and for $d\geq 3$ if we choose
$$
\beta(\eps,d)=\beta \eps^{\frac{d-2}2} \qquad\mbox{and}\quad d\geq 3,\qquad \beta>0,
$$
 then by Brownian scaling and time-reversal, 
\begin{equation}\label{scaling}
u_{\eps,t} (\cdot)\,\, \overset{\ssup d}= \,\,\mathscr Z_{\beta,\eps^{-2}t}(\eps^{-1} \cdot)\;,
\end{equation}
where $\mathscr Z_{\beta,T}(x) = \int_\Omega \mathscr M_{\beta,T}^{\ssup x}(\d W)$ is the total mass of the GMC measure weighted w.r.t. the Wiener measure $\bP_x$. 
When $\beta>0$ is sufficiently small, asymptotic behavior of the solutions (as $\eps\to 0$) as well as associated measures 
have been studied extensively (see \cite{MSZ16,M17,CCM18,BM19,CCM19}). %with $\mathscr H_T(W,B)= \int_0^T \int_{\R^d} \, \phi(W_{s}-y) \, \dot B(s, y)\, \d s\,\, \d y$ we have 
When $\beta>0$ is large, then Theorem \ref{APA}, combined with the scaling relation \eqref{scaling} imply the localization effect of the measures associated to \eqref{she-FK} as $\eps\to 0$: %implies the following result concerning the multiplicative noise stochastic heat equation \eqref{she-intro} in $d\geq 3$. 
\begin{cor}[Pure atomicity of the stochastic heat equation]\label{cor-APA}
Let $d\geq 3$ and assume $\Lambda(\beta)>0$ and $\eps_t\to 0$ as $t\to\infty$ as in Theorem \ref{APA}. Then 
$$
\lim_{T\to \infty}\,\, \frac 1 T\int_0^{T}\bar{\mathscr {M}}_{\beta,t^{-1/2}} \, \big[W_t\in U_{t,\eps_t}\big]\mathrm{d}t=1\quad \text{in}\,\,\P-\,\mbox{probability}, 
$$
where $\bar{\mathscr M}_{\beta,\eps}$ is the normalized GMC measure corresponding to the Feynman-Kac solution \eqref{she-FK}.
%$$
%\bar{\mathscr M}_{\beta,\eps}(\d W)=\frac 1 {\overline{\mathscr Z}_{\beta,\eps}}  \exp\bigg\{\beta\eps^{\frac{d-2}2} \int_0^1 \int_{\R^d} \phi_\eps(W_s-y) \, \dot B(1-s, \d y) \d s- \frac {\beta^2 \eps^{-2} \,\big(\phi_\eps\star \phi_\eps\big)(0)}2 \bigg\} \bP_0(\d W).,
%$$
%and $\overline{\mathscr Z}_{\beta,\eps}$ is the normalization factor which turns $\bar{\mathscr M}_{\beta,\eps}$ into a probability measure.
\end{cor}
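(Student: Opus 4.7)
The plan is to reduce Corollary \ref{cor-APA} to Theorem \ref{APA} via the scaling identity \eqref{scaling}, after lifting it from total masses to the normalized endpoint measures and comparing ``localization radii''. Specifically, I would first extend \eqref{scaling} by reapplying Brownian scaling $W_s=\eps \tilde W_{s/\eps^2}$ inside the Feynman--Kac representation \eqref{she-FK} with an additional endpoint indicator $\1\{W_t\in A\}$: since $W_t=\eps\tilde W_{\eps^{-2}t}$, this yields the distributional identity $\bar{\mathscr M}_{\beta,\eps}[W_t\in A]\overset{d}{=}\bQ_{\beta,\eps^{-2}t}[\eps^{-1}A]$ for every Borel $A\subset\R^d$. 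Specialising to $\eps=t^{-1/2}$ and interpreting $U_{t,\eps_t}$ in Corollary \ref{cor-APA} as the SHE-analogue of \eqref{eq:U}, the image $t^{1/2}U_{t,\eps_t}$ under scaling becomes $V_t:=\{y\in\R^d\colon\bQ_{\beta,t^2}[B_{t^{1/2}}(y)]>c_0\eps_t\}$, so
$$
\bar{\mathscr M}_{\beta,t^{-1/2}}[W_t\in U_{t,\eps_t}]\;\overset{d}{=}\;\bQ_{\beta,t^2}[V_t].
$$

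The key observation for bringing in Theorem \ref{APA} is that $V_t$ is built from balls of radius $t^{1/2}$, whereas \eqref{eq:U} uses radius $1$; since $B_1(y)\subset B_{t^{1/2}}(y)$ for $t\geq 1$, the radius-$1$ region $U_{t^2,\eps_t}$ from \eqref{eq:U} is contained in $V_t$, giving the pointwise lower bound $\bQ_{\beta,t^2}[V_t]\geq\bQ_{\beta,t^2}[U_{t^2,\eps_t}]$. Setting
$$
X_T=\tfrac 1T\int_0^T\bar{\mathscr M}_{\beta,t^{-1/2}}[W_t\in U_{t,\eps_t}]\,\d t,\qquad Z_T=\tfrac 1T\int_0^T\bQ_{\beta,t^2}[U_{t^2,\eps_t}]\,\d t,
$$
both take values in $[0,1]$, and by Fubini together with the scaling identity $\E[X_T]\geq\E[Z_T]$. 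Hence $\E[1-X_T]\leq 1-\E[Z_T]$, and Markov's inequality reduces the corollary to proving $\E[Z_T]\to 1$.

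The remaining and main obstacle is that, after the change of variables $s=t^2$, $\d t=\d s/(2\sqrt s)$,
$$
Z_T\;=\;\frac 1T\int_0^{T^2}\bQ_{\beta,s}[U_{s,\tilde\eps_s}]\,\frac{\d s}{2\sqrt s},\qquad\tilde\eps_s:=\eps_{\sqrt s}\xrightarrow{s\to\infty}0,
$$
which is a weighted Ces\`aro average rather than the uniform one controlled by Theorem \ref{APA}. Applying Theorem \ref{APA} to the sequence $\tilde\eps_s$ only gives, $\P$-a.s., $\Psi(S):=\int_0^S(1-\bQ_{\beta,s}[U_{s,\tilde\eps_s}])\,\d s=o(S)$. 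I would bridge the gap by integration by parts, which yields
$$
\int_0^{T^2}(1-\bQ_{\beta,s}[U_{s,\tilde\eps_s}])\,\frac{\d s}{2\sqrt s}\;=\;\frac{\Psi(T^2)}{2T}+\frac 14\int_0^{T^2}\frac{\Psi(s)}{s^{3/2}}\,\d s.
$$
Using $\Psi(s)\leq \eta s$ for $s$ large (any preassigned $\eta>0$) together with the trivial bound $\Psi(s)\leq s$, both terms on the right are at most $\eta T+O(1)$. Dividing by $T$ and letting first $T\to\infty$ and then $\eta\to 0$ yields $Z_T\to 1$ $\P$-a.s.; dominated convergence (noting $Z_T\in[0,1]$) then upgrades this to $\E[Z_T]\to 1$, which would complete the proof.
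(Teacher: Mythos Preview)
Your argument is correct. The paper does not actually give a proof of this corollary; it simply states it as a consequence of Theorem~\ref{APA} together with the scaling relation~\eqref{scaling}. What you have written is precisely a careful execution of that implicit reduction, and the details you supply --- extending~\eqref{scaling} to the endpoint measure, the monotonicity $U_{t^2,\eps_t}\subset V_t$ coming from the enlarged localization radius $t^{1/2}\geq 1$, the passage from the $[0,1]$-valued $X_T$ to convergence in probability via $\E[1-X_T]\leq 1-\E[Z_T]$, and the integration-by-parts trick that converts the a.s.\ Ces\`aro statement of Theorem~\ref{APA} into control of the $\frac{1}{2\sqrt s}$-weighted average --- are all sound.

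Two small remarks. First, the distributional identity in~\eqref{scaling} only holds for each fixed $t$ (since it uses time-reversal of the noise), so one cannot compare $X_T$ and $Z_T$ pathwise; you correctly circumvent this by comparing only expectations, which is enough for convergence in probability. Second, in the integration-by-parts step the boundary term at $0$ and the integrability of $\Psi(s)s^{-3/2}$ near $0$ both rely on the crude bound $\Psi(s)\leq s$, which you note; this is fine but worth making explicit.
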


\begin{remark}
	In the present setting, the parameter $\beta$ is considered to be a positive (large enough) real number. It is an intriguing question to see if the localization property proved in this article can be extended to a complex GMC in the Wiener space, that is, GMC with complex $\beta$.
\end{remark}

\subsection{Outline of the proof and comparison of proof techniques.}\label{sec-comparison-proof}
In order to provide some guidelines to the reader, we will briefly sketch the central idea of the proof of Theorem \ref{APA} in this section. 
We will also emphasize on the similarities and differences to the earlier approaches used in the existing literature.

As remarked earlier, localization statements for directed polymers were derived using the method of fractional moments (\cite{CSY03,V07}),
while similar results for GMC measures for log-correlated fields in $\R^d$ were proved (\cite{MRV16,BL18}) by studying maximum of branching random walks (\cite{A13,M15}). 
These methods are quite different from the approach used in the present article, for which 
we directly leverage the machinery in \cite{MV14}, while following \cite{BC16}
as a guiding philosophy. 

\subsubsection{Outline of the proof.} The proof of Theorem \ref{APA} splits into three main steps.

\noindent{\it{Step 1:}} The first step is based on studying a metric on the {\it{translation-invariant compactification}} (of the quotient space) of probability measures on $\R^d$ developed in \cite{MV14}.\footnote[3]{Although the compactification 
in \cite{MV14} was carried out for the space $\Mcal_1(\R^d)$ of probability measures on $\R^d$, the exact same construction carries over to the setting of any (Abelian) group acting on the relevant Polish space. In particular, it works also in the lattice setting for the action of $\Z^d$ as an additive group on $\Mcal_1(\Z^d)$.}
Since the method for \cite{MV14} will be a building block of our proof on a conceptual level, it is useful to briefly review its main idea. % of this compactification on a conceptual level.

Note that the space $\Mcal_1(\R^d)$  of probability measures on $\R^d$ is noncompact under the usual weak topology determined by convergence of integrals w.r.t. continuous and bounded functions.
There can be several reasons which can be attributed to this phenomenon. For instance, a Gaussian with a very large variance spreads its mass very thin and eventually totally disintegrates into dust. Also, a mixture like $\frac{1}{2}(\mu \star \delta_{a_n}+\mu\star  \delta_{-a_n})$ splits into two (or more) widely separated pieces as $a_n\to \infty$. To compactify this space, we should be allowed to ``center" each piece separately as well as to allow some mass to be ``thinly spread and disappear." The intuitive idea, starting with a sequence of probability distributions $(\mu_n)_n$ in $\R^d$ is to identify a compact region where $\mu_n$ has its largest accumulation of mass.  
By choosing subsequences if necessary, we can assume that for any $r>0$, $\sup_{x\in \R^d} \mu_n\big(B_r(x)\big)\to q(r)$ as $n\to\infty$ and $q(r)\to p_1 \in [0,1]$ as $r\uparrow\infty$. Then there is a shift $\lambda_n=\mu_n\star \delta_{a_n}$ that converges along a subsequence vaguely to a subprobability measure $\alpha_1$ of mass $p_1$. This means $\lambda_n$ can be written as $\alpha_n+\beta_n$ so that $\alpha_n \Rightarrow\alpha_1$ weakly and we recover the partial mass $p_1\in [0,1]$. We peel off $\alpha_n$ from $\lambda_n$ and repeat the same process for $\beta_n$ to get convergence along a further subsequence. We go on recursively to get
convergence of one component at a time along further subsequences in the space of subprobability measures, {\it{modulo spatial shifts}}. The picture is, $\mu_n$ roughly concentrates on 
{\it{widely separated}} compact pieces of masses $\{p_j\}_{j\in\N}$ while the rest of the mass $1-\sum_j p_j$ leaks out.

In other words, given any sequence $\widetilde\mu_n$ of equivalence classes in $\widetilde\Mcal_1(\R^d)$, which is the quotient space of $\Mcal_1(\R^d)$ under spatial shifts, there is a subsequence which converges 
(the convergence criterion is determined by a metric structure, see Section \ref{sec-review-MV14} for the precise definition) to an element $\xi=\{\widetilde\alpha_1,\widetilde\alpha_2,\dots\}$, a collection of equivalence classes of subprobabilities $\alpha_j$ of masses $0\leq p_j\leq 1$, $j\in\N$.\footnote[8]{For example, let $\mu_n$ be the Gaussian mixture $\frac 13 N(n,1)+\frac 13 N(n^2,1)+\frac 1 3 N(0,n)$. 
Then the limiting object for $\widetilde\mu_n$ is the collection $\xi=\{\widetilde\alpha_1,\widetilde\alpha_1\}\in \X$, where $\widetilde\alpha_1$ is the equivalence class of a Gaussian with variance $1$ and weight $\frac 13$.} The space $\X$ of such collections $\xi$ of equivalence classes is the compactification of $\widetilde\Mcal_1(\R^d)$; see Theorem \ref{thm-MV14} below for a precise statement.  
In the present context, then our task boils down to investigating the asymptotic behavior of the {\it{GMC endpoint orbits}} $\widetilde{\bQ}_{\beta,T}$ embedded in $\X$. 

With the function $V=\phi\star \phi$ vanishing at infinity, we heavily exploit the metric structure on the compactification $\X$  to derive continuity properties of shift-invariant functionals 
of the form 
$$
\Phi(\xi)=\frac{\beta^2 V(0)}2- \frac{\beta^2}2 \sum_i \int_{\R^{2d}} V(x-y) \alpha_i(\d x) \alpha_i(\d y) \qquad \xi=(\widetilde\alpha_i)_i\in \X,
$$
on $\X$ (see Sections \ref{subsec-total-mass}-\ref{subsec-Phi}). Methods from stochastic calculus (\cite{CN95,CC13}) then enable us to decompose the polymer free energy $\frac 1 T\log Z_{\beta,T}$ in terms of a martingale and an additive functional of  $\Phi(\widetilde{\bQ}_{\beta,T})$. This step is carried out in Section \ref{sec-free-energy}.

\noindent{\it{Step 2:}} The next main step is to construct a certain dynamics on $\X$ described by transition probabilities $\pi_t(\xi,\d\xi^\prime)=\P[\xi^{\ssup t}\in\d\xi^\prime|\xi]$ with %and $\Pi_t(\vartheta,\cdot)=\int \pi_t(\xi,\cdot)\vartheta(\d\xi)$ for $\vartheta\in \Mcal_1(\X)$. Therefore we have to stress what is meant by 
$\xi^{\ssup t}=(\widetilde{\alpha}_i^{\ssup t})_{i\in I}$ and $\alpha_i^{\ssup t}\in \Mcal_{\leq 1}$ for any $i\in I$ and $t\geq 0$. Here,
 $\alpha_i^{\ssup t}$ can be seen as the subprobability $\alpha_i$ whose mass gets transported through the space $\R^d$ from time zero to $t$ by the following dynamic: 
$$
\alpha_i^{\ssup t}(\mathrm{d}x):=\frac{1}{\mathscr F_t(\xi)+\E\big[ Z_t-\mathscr F_t(\xi)\big]}\int_{\R^d}\alpha_i (\mathrm{d}z)\bE_z\bigg [\1_{\{W_t\in\mathrm{d}x\}}\, \exp\big\{\beta\mathscr H_t(W)\big\}\bigg] 
$$
where $Z_t=\bE_0[\exp\{\beta\mathscr H_t(W)\}]$, $\mathscr H_t(W)=\int_0^t \int_{\R^d} \, \phi(W_{s}-y) \, \dot B(s, y)\, \d y\,\, \d s$ is the Gaussian field (recall \eqref{mathscr-H}) and 
$$
\mathscr F_t(\xi)=\sum_{i\in I} \int_{\R^d}\int_{\R^d}\alpha_i(\mathrm{d}z) \bE_z\big[\1_{\{W_t\in\mathrm{d}x\}}\,\, \exp\big\{\beta\mathscr H_t(W)\big\}\big].
$$
Section \ref{section time dependence} is then devoted to showing that for any $t>0$, the above kernel map $\xi\mapsto \pi_t(\xi,\cdot)$ is continuous on $\X$. For its proof, we also heavily exploit the precise metric structure 
on the space $\X$. In particular, an important recipe is provided by Proposition \ref{lemma-step5} which is based on a second moment computation that hinges on the notion of total disintegration of mass, an important trait for the topology on $\X$ (see \eqref{decompose-3} for a precise statement) as well as a decoupling phenomenon of two independent GMC chains at large distances that captures the underlying {\it{attractive}} nature of the model. The aforementioned representation of  $\frac 1 T\log Z_{\beta,T}$ and the above continuity of $\xi\mapsto \pi_t(\xi,\cdot)$ 
also imply a variational formula for the (quenched) free energy $\lim_{T\to\infty}\frac 1 T \log Z_{\beta,T}=\inf_{\vartheta \in \mathfrak m} \int \Phi(\xi) \vartheta(\d\xi)$, where the infimum is taken (and given the continuity 
of the above map), attained over the {\it{compact}} set $\mathfrak m=\{\vartheta\in\Mcal_1(\X)\colon \Pi_t(\vartheta,\cdot)=\vartheta \,\forall t\geq 0\}$ of fixed points of $\Pi_t(\vartheta,\cdot)=\int \pi_t(\xi,\cdot)\vartheta(\d\xi)$ for $\vartheta\in \Mcal_1(\X)$.

\noindent{\it{Step 3:}} Finally, one shows that the minimizers $\mathfrak m_0\subset \mathfrak m$ of the above variational formula attract the empirical measures $\frac 1 T\int_0^T \delta_{\widetilde{\bQ}_t} \d t$ of the endpoint orbit and 
as long as $\Lambda(\beta)>0$, no mass dissipates under any $\vartheta\in \mathfrak m_0$, which concludes the proof of Theorem \ref{APA}.

\subsubsection{Comparison with the earlier approach.} As mentioned earlier, we have drawn inspiration from the techniques recently employed in \cite{BC16} for directed polymers 
which also followed the program in \cite{MV14} for constructing a metric on
the compactification in a lattice setting. It was also shown (\cite[Proposition A.3]{BC16}) that the metric therein produces the same topology as \cite{MV14} when the latter structure is adapted to the lattice setting. 
However, the metric in \cite{BC16} is structurally quite different from \cite{MV14}. In particular, the construction of the former metric crucially exploits the countability (graph structure)
of $\Z^d$ and relies on interpreting probability measures on $\Z^d$ as (mass) functions which allows distant point masses to nearly live on separate copies of $\Z^d$.\footnote[9]{In \cite{BC16}, 
the difference $(n,x)-(m,y)$ between any two elements $(n,x), (m,y)\in \N\times \Z^d$ is defined to be infinity if $n\neq m$, while it is $x-y$ if $n=m$.
This interpretation is used in this setting to construct the metric on the set $\mathcal S=\{f\colon \N\times\Z^d \to \R\colon f\geq 0, \sum_{(n,x)} f(n,x)\leq 1\}$ of subpartitioned mass functions and derive its compactness; see \cite[Section 2.1]{BC16} for details.} In this setting, then the rewrite of the polymer free energy is carried out by a telescoping sum and crucial continuity properties of the functionals therein are checked exploiting this distance function between two partitioned mass functions in the lattice setting. 

In contrast (i.e., in absence of the countable lattice structure), in the present context, the crucial continuity properties of the relevant functionals are deduced by directly leveraging the representation structure 
of the metric in \cite{MV14}. Therefore, the actual execution of the machinery in Section \ref{section Space X}-Section \ref{section time dependence} (i.e., for Step 1 and Step 2 in the aforementioned discussion) 
is therefore quite different from the existing literature in the lattice setting. The remaining arguments for the proof of Theorem \ref{APA} are then provided in Section \ref{sec-final-details} by adapting the approach from \cite{BC16}  
to our setting.

%we use It\^o's formula to decompose the free energy into a martingale and a (shift-invariant and additive) functional of the GMC endpoint orbit.
%Due to the attractive and shift-invariant nature of the GMC measure, these functionals turn out to belong to the class of {\it{natural continuous functions}} on the aforementioned compactification $\X$ in \cite{MV14} 
%(see \eqref{Lambda-def} for the precise definition of this class) and these functionals of the endpoint orbit 
% are then analyzed in Section \ref{section Space X} and Section \ref{section time dependence} 
%directly by leveraging the representation structure of the metric in \cite{MV14} (see Theorem \ref{thm-MV14}). 
%Due to the aforementioned reasons, the actual execution of the machinery in Section \ref{section Space X}-Section \ref{section time dependence} is therefore 
%different from the existing literature in the lattice setting. The remaining arguments for the proof of Theorem \ref{APA} are then provided in Section \ref{sec-final-details} by adapting the arguments from \cite{BC16}  
%to our setting. 

\noindent{\it{Organization of the rest of the article:}} The rest of the article is organized as follows. In Section \ref{section Space X} we first review the construction of the metric $\mathbf D$ on $\X$ from \cite{MV14}, record 
its salient properties, prove the requisite (semi)continuity properties of 
functionals on $\X$ and derive a suitable representation of the free energy. In Section \ref{section time dependence}, we derive the continuity properties of the transition probabilities in $\X$ and obtain 
a variational formula for the free energy. In Section \ref{sec-final-details}, we provide the necessary details to conclude the proof of Theorem \ref{APA} and in Appendix \ref{sec-appendix} we recall and sketch the proof of some auxiliary results.

\section{Functionals on the metric space $(\X,\mathbf{D})$ and their properties.}
\label{section Space X}

\subsection{The space $\X$ and its metric $\mathbf D$.}\label{sec-review-MV14}

%In this section we will review the most important assertions concerning the ``compactfication" theory developed in \cite{MV14}. 
Throughout the article, we will denote by $\Mcal_1= {\Mcal_1}(\R^d)$ (resp., $\Mcal_{\leq 1}$) the space of probability (resp., subprobability) distributions on $\R^d$ and by $\widetilde\Mcal_1= \Mcal_1 \big/ \sim$ the quotient space 
of $\Mcal_1$ under the action of $\R^d$ (as an additive group on $\Mcal_1$), that is, for any $\mu\in \Mcal_1$, its {\it{orbit}} is defined by $\widetilde{\mu}=\{\mu\star\delta_x\colon\, x\in \R^d\}\in \widetilde\Mcal_1$. 
%The spaces $\Mcal_1$ as well as $\Mcal_{\leq 1}$ are equipped with two natural topologies. Recall that a sequence $\mu_n$ in $\Mcal_{\leq 1}$ is said to converge {\it{weakly}} to $\mu$,  denoted by  $\mu_n\weak\mu$,  if
%$\lim_{n\to\infty}\int_{\R^d} f(x)\mu_n(\d x)=\int_{\R^d} f(x) \mu(\d x)$ for all bounded continuous functions on $\R^d$.  On the other hand, in the {\it{vague topology}} for the convergence of $\mu_n$ to $\mu$, denoted by 
%$\mu_n\vague\mu$, we only require $\int f\d\mu_n\to \int f \d\mu$ for continuous functions $f$ with compact support, or for continuous functions that tend to $0$ as $|x|\to\infty$. 

As usual, we write $\alpha_n\Rightarrow\alpha$ when $\alpha_n$  converges weakly to $\alpha $ in the space $\Mcal_{\leq 1}$ (i.e., if $\int f \d\alpha_n \to \int f \d\alpha$ for all continuous and bounded $f$ in $\R^d$). 
We say two sequences $(\alpha_n)_n$ and $(\beta_n)_n$ in $\Mcal_{\leq 1}$ are {\it{widely separated}} if $\int_{\R^{2d}} F(x-y) \alpha_n(\d x) \beta_n(\d y)\to 0$ for any continuous function $F$ which vanishes at infinity.
We also say that a sequence $(\beta_n)_n$ in $\Mcal_{\leq 1}$ {\it{total disintegrates}} if for any $r>0$, $\sup_{x\in\R^d} \beta_n(B_r(x)) \to 0$ as $n\to\infty$. Clearly, any totally disintegrating sequence is widely separated 
from every sequence of subprobability measures. 

%Note that the total mass of probability measures, which is conserved in the weak convergence, is not necessarily conserved under vague convergence -- a salient attribute which distinguishes these two topologies.
%Thus, equipped with the vague topology, $\Mcal_{\leq 1}$ is always compact, while a sequence $(\mu_n)_n$ in $\Mcal_{\leq 1}$ need not always find a weakly convergent subsequence -- the location of the mass can shift away to $\infty$ as in $\mu_n=\mu\ast\delta_{a_n}$ with $a_n\to\infty$, or it can split into two (or more)  pieces like in $\mu_n=\frac{1}{2}[\mu\ast\delta_{a_n}+\mu\ast\delta_{-a_n}]$, or it can also totally disintegrate into dust like a Gaussian with  a large variance. One imagines, in the limit, an empty, finite or countable collection $I$ of mass distributions $\{\alpha_i\colon \, i\in I\}$ that are widely separated with total mass $\sum_{i\in I} \alpha_i (\R^d)=p\le1$ and the remaining mass $1-p$ having totally disintegrated.  Therefore, a natural ``compactification" could be a space $\widetilde{\mathcal X}$ of empty, finite or countable collections of orbits $\{{\widetilde \alpha}_i\colon \, i\in I\}$ of sub-probability distributions $\alpha_i$ having masses $p_i$ with total mass $\sum_i p_i\le 1$, i.e.,  
We define 
\begin{equation}\label{eq-space-X}
\X=\left\{\xi:\xi=\{\widetilde{\alpha}_i\}_{i\in I},\alpha_i\in \mathcal{M}_{\leq 1},\sum_{i\in I}\alpha_i(\R^d)\leq 1\right\}
\end{equation}
to be the space of all empty, finite or countable collections of orbits of subprobability measures with total masses $\leq 1$.
For any $\xi=(\widetilde\alpha_i)_i\in \X$ and any $\mu\in \Mcal_1(\R^d)$, we will also write 
\begin{equation}\label{xi-star-mu}
\xi\star\mu=\big(\widetilde{\alpha_i\star \mu}\big)_i.
\end{equation}
The space $\X$ also comes with a metric structure that allows explicit computations which will be used throughout the sequel. The definition of the metric is inspired by the following 
class of functionals. For any $k\geq 2$, let $\mathcal H_k$ is the space of functions $h:\left(\R^d\right)^k\rightarrow \R$ which are invariant under rigid translations and which vanish at infinity in the following sense.
Any $h \in \mathcal H_k$ satisfies 
$$
\begin{aligned}
&h(x_1+y,...,x_k+y)=h(x_1,...,x_k)\qquad \forall \,\, y,x_1,...,x_k\in\R^d
\quad \mbox{and} \\
&\lim_{\sup_{i\neq j }|x_i-x_j|\rightarrow\infty}h(x_1,...,x_k)=0.
\end{aligned}
$$
Then for $k\geq 2$, $(\mathcal H_k,\|\cdot\|_\infty)$ is a separable Banach space. Moreover, for any $h\in \mathcal{H}=\bigcup_{k\geq 2}\mathcal{H}_k$, the functionals
\begin{equation}\label{Lambda-def}
\Lambda (h,\xi)=\sum_{\widetilde\alpha\in\xi}\int_{(\R^d)^k }h(x_1,\ldots, x_k)\alpha(\d x_1)\cdots\alpha(\d x_k),
\end{equation}
are well-defined on $\X$ because of translation-invariance of $h$, and are natural continuous functions to consider on $\X$. In other words, 
a sequence $\xi_n$ is desired to ``converge" to $\xi$ in the space $\X$ provided $\Lambda(h,\xi_n)\to \Lambda(h,\xi)$ for any $h\in \mathcal H$. This leads to the following definition of the metric $\mathbf D$ on $\X$.
For any $\xi_1,\xi_2\in \X$, we set 
\begin{equation}\label{eq-D}
\begin{aligned}
\mathbf{D}(\xi_1,\xi_2)&=\sum_{r=1}^{\infty}\frac{1}{2^r}\frac{1}{1+\lVert h_r\rVert_{\infty}} \bigg|\Lambda(h_r,\xi_1)- \Lambda(h_r,\xi_2)\bigg| \\
&=\sum_{r=1}^{\infty}\frac{1}{2^r}\frac{1}{1+\lVert h_r\rVert_{\infty}}\bigg|\sum_{\widetilde{\alpha}\in \xi_1}\int h_r(x_1,...,x_{k_r})\prod_{i=1}^{k_r}\alpha(\mathrm{d}x_i)-\sum_{\widetilde{\alpha}\in \xi_2}\int h_r(x_1,...,x_{k_r})\prod_{i=1}^{k_r}\alpha(\mathrm{d}x_i)\bigg|. 
\end{aligned}
\end{equation}
%Since $\|h_r\|_\infty \leq 2$ and $\sum_i \alpha_i(\R^d)\leq 1$ for any $\xi=(\widetilde\alpha_i)_i\in \X$, $\mathbf D(\xi_1,\xi_2)\leq 2$ and t
\renewcommand{\thefootnote}{$\diamond$}
The following representation theorem was derived in \cite[Theorem 3.1, Theorem 3.2]{MV14}\footnote{Note that the uniqueness of the above representation theorem 
is captured by the fact that $\xi_1=\xi_2$ in $\X$ if $\Lambda(h,\xi_1)=\Lambda(h,\xi_2)$ for all $h\in \mathcal H_k$ and $k\geq 2$, while the existence part is underlined 
by the fact that for any $(\widetilde\mu_n)\subset\widetilde\Mcal_1(\R^d)$, $\Lambda(h,\mu_n)\to \Lambda(h,\xi)$ for some $\xi\in \X$ and conversely for any $\xi\in \X$ the latter convergence 
holds for some $(\widetilde\mu_n)\subset\widetilde\Mcal_1(\R^d)$.} and will be used throughout the sequel. 
\begin{theorem}\label{thm-MV14}
$\mathbf D$ is a metric on $\X$. The quotient space $\widetilde\Mcal_1$ is dense in $(\X, \mathbf D)$ and any sequence in $\widetilde\Mcal_1$ has a limit in $\X$ along a subsequence. 
Thus, $\X$ is the compactification as well as the completion of the totally bounded metric space $\widetilde\Mcal_1$ under $\mathbf D$.
\end{theorem}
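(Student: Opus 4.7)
The plan is to verify the metric axioms for $\mathbf{D}$, then establish sequential (pre)compactness together with density of $\widetilde\Mcal_1$, and finally deduce the compactification/completion statement.

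First I would check that $\mathbf{D}$ is a metric. Symmetry and the triangle inequality are immediate from the pointwise structure of \eqref{eq-D}, and the series converges since each summand is bounded by $2^{-r}$. The only delicate axiom is $\mathbf{D}(\xi_1,\xi_2)=0\Rightarrow \xi_1=\xi_2$. Here I would fix the reference family $\{h_r\}$ to be dense in the separable Banach space $\mathcal H=\bigcup_{k\geq 2}\mathcal H_k$ (concatenating countable dense subsets of each $\mathcal H_k$, with a common normalization), so that $\mathbf{D}(\xi_1,\xi_2)=0$ forces $\Lambda(h,\xi_1)=\Lambda(h,\xi_2)$ for every $h\in\mathcal H$. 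To recover $\xi$ from these values, I would localize $h$ near the diagonal: choosing $h(x_1,\dots,x_k)=g(x_1-x_k,\dots,x_{k-1}-x_k)\chi_R(\max_{i<j}|x_i-x_j|)$ with $\chi_R$ a cutoff at distance $R$, the functional $\Lambda(h,\xi)$ picks up only contributions from single orbits $\tilde\alpha_i$ (since distinct orbits in $\xi$ sit at arbitrarily wide separation in any representative) and therefore recovers the shift-invariant $k$-point moments $\int g\,\mathrm d\alpha_i^{\otimes k}$. These moments determine the multiset of orbits $\{\tilde\alpha_i\}_i$.

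Next I would prove sequential precompactness by the iterative peeling sketched in the introductory discussion before the theorem. Starting with $(\mu_n)\subset\Mcal_1$, I would set $q(r)=\limsup_n\sup_{x\in\R^d}\mu_n(B_r(x))$, pass to a subsequence so the $\sup$ is attained in the limit, and define $p_1=\lim_{r\uparrow\infty}q(r)\in[0,1]$. If $p_1>0$, choose shifts $a_n$ realizing the supremum at a suitable radius and extract a vague limit $\alpha_1$ of $\mu_n\star\delta_{a_n}$ of mass $p_1$; otherwise $\mu_n$ totally disintegrates and the limit $\xi=\emptyset\in\X$. Peeling off a compactly supported mollification of $\alpha_1$ from $\mu_n$ produces a residual sequence $\beta_n$ of subprobabilities widely separated from $\alpha_n$, to which the same procedure is applied to extract $\alpha_2$, and so on. Diagonal extraction yields $\xi=(\tilde\alpha_j)_j$ with $\sum_j p_j\leq 1$. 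To upgrade this to $\mathbf{D}(\mu_{n_k},\xi)\to 0$, I would use that each $h_r\in\mathcal H_{k_r}$ vanishes at infinity in separations, so $\Lambda(h_r,\mu_{n_k})$ depends only on finitely many peeled clusters up to arbitrarily small error, and the contribution of the disintegrating residual tends to zero.

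For density of $\widetilde\Mcal_1$ in $(\X,\mathbf{D})$, given $\xi=(\tilde\alpha_i)_{i\in I}\in\X$ I would build approximants $\mu_n\in\Mcal_1$ by placing translates $\alpha_i^{\ssup n}=\alpha_i\star\delta_{x_i^{\ssup n}}$ with $|x_i^{\ssup n}-x_j^{\ssup n}|\to\infty$ for $i\neq j$, and absorbing the defect $1-\sum_i\alpha_i(\R^d)$ into a totally disintegrating probability (e.g.\ a Gaussian of variance $n$ placed far from all clusters). Wide separation makes each $\Lambda(h_r,\mu_n)$ split into per-cluster sums converging to $\Lambda(h_r,\xi)$, hence $\mathbf{D}(\mu_n,\xi)\to 0$. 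Combined with the previous step, this gives both density of $\widetilde\Mcal_1$ and the conclusion that $(\X,\mathbf{D})$ is the completion and compactification of $\widetilde\Mcal_1$; total boundedness of $\widetilde\Mcal_1$ is then immediate from compactness of its completion.

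The main obstacle I anticipate is the separation argument in the first paragraph: to turn equality of all translation-invariant $k$-point integrals into equality of the multisets of orbits in $\X$ requires a careful localization and bookkeeping of how mass distributes over widely separated clusters, particularly when infinitely many orbits are present and their masses accumulate to a value strictly less than one.
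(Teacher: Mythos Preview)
The paper does not actually prove this theorem: it is quoted from \cite{MV14} (see the sentence ``The following representation theorem was derived in \cite[Theorem 3.1, Theorem 3.2]{MV14}'' and the accompanying footnote), so there is no in-paper proof to compare your attempt against. What the paper does provide is a heuristic outline of the construction in Section~1.3.1 (the iterative peeling of mass clusters) and, in the footnote after the theorem, the remark that uniqueness amounts to showing $\Lambda(h,\xi_1)=\Lambda(h,\xi_2)$ for all $h\in\mathcal H$ implies $\xi_1=\xi_2$, while existence amounts to the two-sided approximation statement. Your sketch follows precisely this outline: the peeling argument for precompactness, the wide-separation construction for density, and the moment-determinacy argument for positive definiteness of $\mathbf D$.

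One comment on the point you yourself flag as the main obstacle. In your first paragraph you write that ``distinct orbits in $\xi$ sit at arbitrarily wide separation in any representative,'' but this is not quite the right formulation: for a fixed $\xi=(\widetilde\alpha_i)_i$ the functional $\Lambda(h,\xi)$ is already defined as the sum $\sum_i\int h\,\d\alpha_i^{\otimes k}$, with no cross-terms between different $\alpha_i$, so no localization is needed to isolate single-orbit contributions. The genuine difficulty is the converse direction: showing that knowledge of all the power sums $\sum_i\int h\,\d\alpha_i^{\otimes k}$ over $h\in\mathcal H_k$, $k\geq 2$, determines the multiset $\{\widetilde\alpha_i\}$. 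This is a symmetric-function/moment problem on the space of orbits, and it is where the real work in \cite{MV14} lies; your sketch correctly identifies it but does not resolve it.
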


%\begin{remark}\label{rmk-D-leq2}
%Note that for any $\xi_1,\xi_2\in \X$, $\mathbf D(\xi_1,\xi_2)\leq 2$, and thus, for any Lipschitz function $\ell:\X\to \R$ with Lipschitz constant bounded by $1$, $|\ell(\xi_1)-\ell(\xi_2)|\leq 2$.
%\end{remark}

The metric $\mathbf D$ provides the following convergence criterion in $\X$. 
%determines the following convergence criterion on the space $\X$ which we will exploit on several occasions to check continuity properties of various functionals on $\X$. 
Let a sequence $(\xi_n)_n$ consist of a single orbit $\widetilde\gamma_n$ and $\mathbf D(\xi_n,\xi)\to 0$ where $\xi=(\widetilde\alpha_i)_i\in \X$ such that $\alpha_1(\R^d)\geq \alpha_2(\R^d) \geq \dots$.
Then given any $\eps>0$, we can find $k\in \N$ such that $\sum_{i>k} \alpha_i(\R^d) <\eps$ and 
\begin{itemize}
\item We can write 
\begin{equation}\label{decompose-1}
\gamma_n= \sum_{i=1}^k\alpha_{n,i}+ \beta_n
\end{equation} 
such that for any $i=1,\dots,k$, there is a  sequence $(a_{n,i})_n\subset \R^d$ such that
\begin{equation}\label{decompose-2}
\begin{aligned}
&\alpha_{n,i}\star \delta_{a_{n,i}}  \Rightarrow \alpha_i \quad\mbox{with}\quad \lim_{n\to\infty} \, \inf_{i\ne j}\,\, |a_{n,i}- a_{n,j}| =\infty.\\
&
\end{aligned}
\end{equation}
\item The sequence $\beta_n$ {\it{totally disintegrates}}, meaning for any $r>0$, $\sup_{x\in \R^d} \beta_n\big(B_r(x)\big)\to 0$. Equivalently, for any $h\in \mathcal H_2$, 
\begin{equation}\label{decompose-3}
\begin{aligned}
&\lim_{n\to\infty} \int_{\R^{2d}} h(x,y) \alpha_{n,i}(\d x) \beta_n(\d y)=0\qquad\mbox{for any}\,\,i=1,\dots,k\qquad\mbox{and,}\\% \qquad \mbox{and}\\
& \limsup_{n\to\infty} \int_{\R^{2d}} h(x,y) \beta_{n}(\d x) \beta_n(\d y)\leq \eps.
\end{aligned}
\end{equation}
\end{itemize}

Finally, we remark on the topology on the space of probability measures on the space $\X$, which, as usual, will be denoted by $\Mcal_1(\X)$. 
On this space, we will work with the Wasserstein metric defined by
\begin{align}
	\label{Wasserstein-metric}
	\mathscr{W}(\vartheta,\vartheta^\prime)=\inf_{\Gamma}\int_{\X\times\X}\mathbf{D}(\xi_1,\xi_2)\gamma(\mathrm{d}\xi_1,\mathrm{d}\xi_2)
\end{align}
where the infimum is taken over  probability measures $\Gamma$ on $\X\otimes\X$ with marginals $\vartheta,\vartheta^\prime\in\Mcal_1(\X)$. Sometimes it will be convenient 
to use the dual-representation 
\begin{equation}\label{Wasserstein-dual}
\mathscr{W}(\vartheta,\vartheta^\prime)=\sup_\ell \bigg| \int_{\X} \ell(\xi) \vartheta(\xi) - \int_{\X} \ell(\xi) \vartheta^\prime(\d\xi)\bigg|  
\end{equation}
with the supremum being taken over all Lipschitz functions $\ell: \X\to \R$ with Lipschitz constant bounded by $1$. Since the difference of the integrals above is not altered by adding any finite constant, we can as well restrict the above supremum to those $\ell$ which also vanish at $\widetilde 0\in \X$. The space of such Lipschitz functions on $\X$ will be denoted by $\mathrm{Lip}_1(0)$.

\subsection{The total mass functional.}\label{subsec-total-mass}

We introduce the following functionals $\Psi, \Psi_\eps:\X\rightarrow [0,1]$ and $\mathscr I_{\Psi_\eps}:\Mcal_1(\X)\to \R$:
as
\begin{align}
%\begin{aligned}
&\Psi(\xi)=\sum_{i\in I}\alpha_i(\R^d) \qquad \mbox{with}\quad \xi=(\widetilde\alpha_i)_{i\in I}, \label{Psi-def}\\
&\Psi_{\eps}(\xi)=\sum_{i\in I}\int_{\R^d}\, \1_{\big\{y\in\R^d\colon\,  \alpha_i(B_1(y))>c_0\eps\big\}}(x)\,\alpha_i(\mathrm{d}x) \quad\mbox{where}\quad \eps\in (0,1), \, c_0=|B_1(0)|, \nonumber \\
&\mathscr I_{\Psi_{\eps}}(\vartheta)=\int_{\X}\Psi_{\eps}(\xi)\vartheta(\mathrm{d}\xi). \label{scr-I-psi}
\end{align}
Obviously, for any $z\in \R^d$ and $i\in I$, $\alpha_i(\R^d)=\alpha_i\star\delta_z(\R^d)$ and
$$
\begin{aligned}
\int_{\R^d} \1_{\big\{y: {(\alpha_i\star\delta_z)(B_1(y))}>c_0 \eps\big\}}(x)(\alpha_i\star\delta_z)(\mathrm{d}x)
		 	%&=\int_{\R^d}1_{\left\{y:\frac{\alpha_i(B_{y-z}(1))}{|B_0(1)|}>\eps\right\}}(x)\alpha_i(\mathrm{d}(x-z))\\
		 	%&=\int_{\R^d}1_{\left\{y+z:\frac{\alpha_i(B_y(1))}{|B_0(1)|}>\eps\right\}}(x+z)\alpha_i(\mathrm{d}x)\\
		 	&=\int_{\R^d}\1_{\left\{y:{\alpha_i(B_1(y))}>c_0 \eps\right\}}(x)\alpha_i(\mathrm{d}x).
		 	%&=\Psi_{\eps}(\xi)
			\end{aligned}
			$$
Thus $\Psi$, $\Psi_\eps$ and $\mathscr I_{\Psi_\eps}$ are all well-defined.  We make two remarks regarding the above functionals. First, if 
	$$
	\nu_T=\frac 1 T \int_0^T \delta_{\widetilde{\bQ}_{\beta,t}} \, \d t \in \Mcal_1(\X) \qquad\mbox{where}\,\, \,\widetilde{\bQ}_{\beta,t}= \widetilde{\hP_{\beta,t}  W_t^{-1}} \in \X,
	$$ 
	then the identity 
	
	\begin{equation}\label{APA-psi-eps}
	\mathscr I_{\Psi_{\eps}}(\nu_T)= \frac 1 T \int_0^T \Psi_\eps(\widetilde\bQ_{\beta,t}) \d t= \frac 1 T  \int_0^T \bQ_{\beta,t}[U_{t,\eps}] \, \d t 
\end{equation}
will be useful in deriving Theorem \ref{APA}. Second,  for any $p\in \N$ and $t,\beta>0$, 
\begin{equation}\label{rmk-moment-total-mass}
\E[(\Psi(\xi)Z_{\beta,t}+(1-\Psi(\xi))\E Z_{\beta,t})^{-p}] <\infty.
\end{equation}
Indeed, if $[0,1]\ni\Psi(\xi)\geq 1/2$, 
		then $\Psi(\xi)Z_{\beta,t}+(1-\Psi(\xi))\E Z_{\beta,t}\geq \frac{1}{2}Z_{\beta,t}$
		and  by Jensen's inequality, $\E[Z_{\beta,t}^{-p}]\leq \E\big[\bE_0[\exp\{-p\beta\mathscr H_t(W)\}]\big]$, such that 
		$$
		\E\big[\big(\Psi(\xi)Z_{\beta,t}+(1-\Psi(\xi))\E Z_{\beta,t}\big)^{-p}\big]\leq 2^{p}\E\big[Z_{\beta,t}^{-p}\big] \leq 2^{p}\e^{p^2\beta^2 t V(0)/2}. 
		$$
		If $\Psi(\xi)\leq 1/2$, then also $\Psi(\xi)Z_{\beta,t}+(1-\Psi(\xi))\E Z_{\beta,t}\geq \frac{1}{2}\E Z_{\beta,t}$ which again ensures the validity of the above bound and proves \eqref{rmk-moment-total-mass}.

\renewcommand{\thefootnote}{$\diamond\diamond$}
Although $\Psi$ and $\Psi_\eps$ need not be continuous\footnote{Obviously, if $\mu_n=N(0,n)$, then $\widetilde\mu_n \to \widetilde 0$ in $\X$, while $1=\Psi(\widetilde\mu_n)>\Psi(\widetilde 0)=0$.}, we have the following.
\begin{lemma}
	\label{lower semi-cont}
	Fix $\eps\in (0,1)$. Then $\Psi$, $\Psi_{\eps}$ are lower semicontinuous on $\X$ and $\mathscr I_{\Psi_\eps}$ is lower semicontinuous on $\Mcal_1(\X)$. %That is, if for $n\rightarrow\infty$, $\xi_n\rightarrow\xi$ in $\X$, then $\liminf_{n\rightarrow\infty}\Psi(\xi_n)\geq \Psi(\xi)$ and $\liminf_{n\rightarrow\infty}\Psi_{\eps}(\xi_n)\geq \Psi_{\eps}(\xi)$
	
\end{lemma}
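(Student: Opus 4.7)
The plan is to exhibit $\Psi_\eps$ as a pointwise supremum of continuous bounded functionals on $(\X,\mathbf D)$; lower semicontinuity of $\Psi_\eps$ then follows by definition, that of $\Psi$ via the identity $\Psi=\sup_{\eps>0}\Psi_\eps$, and that of $\mathscr I_{\Psi_\eps}$ from the standard fact that integration of a bounded LSC function is LSC in the weak topology on $\Mcal_1(\X)$ (valid because $\X$ is compact by Theorem~\ref{thm-MV14}).

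The key building block is as follows. For any continuous $\phi:\R^d\to[0,1]$ with $\mathrm{supp}(\phi)\subseteq B_1(0)$ and any polynomial $p(t)=\sum_{k=1}^{N}c_k t^k$ with $p(0)=0$, set
\[
F_{p,\phi}(\xi):=\sum_{i}\int_{\R^d}p\bigl((\alpha_i\star\phi)(x)\bigr)\,\alpha_i(\mathrm{d}x),\qquad \xi=(\widetilde{\alpha}_i)_i\in\X.
\]
Expanding $((\alpha_i\star\phi)(x))^k=\int\prod_{j=1}^k\phi(x-y_j)\prod_j\alpha_i(\mathrm{d}y_j)$, one sees that $F_{p,\phi}$ is a finite linear combination of the functionals $\Lambda(h_k,\cdot)$ with $h_k(x,y_1,\dots,y_k):=\prod_{j=1}^k\phi(x-y_j)$. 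These $h_k$ lie in $\mathcal{H}_{k+1}$: translation invariance is immediate, and whenever $\max_{a\neq b}|z_a-z_b|\to\infty$ with $(z_0,z_1,\dots,z_k)=(x,y_1,\dots,y_k)$, compact support of $\phi$ forces $\phi(x-y_j)=0$ for at least one $j$ (either directly, if the large pair involves $x$, or by the triangle inequality otherwise). Since $\Lambda(h,\cdot)$ is continuous on $(\X,\mathbf D)$ for every $h\in \mathcal{H}$ (from the metric \eqref{eq-D} and a uniform-norm approximation by the dense family $\{h_r\}$), $F_{p,\phi}$ is continuous on $\X$. Weierstrass approximation then extends continuity to $F_{\chi,\phi}$ for any continuous $\chi:[0,1]\to\R$ with $\chi(0)=0$ (polynomial approximants may be shifted by a constant to restore $p(0)=0$).

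Next I choose nondecreasing continuous $\chi_n:[0,1]\to[0,1]$ with $\chi_n\equiv 0$ on $[0,c_0\eps]$ and $\chi_n\uparrow \1_{(c_0\eps,1]}$ pointwise, together with continuous $\phi_n:\R^d\to[0,1]$ supported in $B_1(0)$ with $\phi_n\uparrow \1_{B_1(0)}$. Then $(\alpha_i\star\phi_n)(x)\uparrow\alpha_i(B_1(x))$ by MCT, and the monotonicity of $\chi_n$ in $n$ combined with its nondecreasing behavior on $[0,1]$ ensures the composition $\chi_n((\alpha_i\star\phi_n)(x))$ increases to $\1\{\alpha_i(B_1(x))>c_0\eps\}$ pointwise in $x$. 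Two applications of MCT (first against $\alpha_i$, then summing over $i$) give $F_{\chi_n,\phi_n}(\xi)\uparrow\Psi_\eps(\xi)$ for every $\xi\in\X$, establishing lower semicontinuity of $\Psi_\eps$. For $\Psi$: the inclusion $\mathrm{supp}(\alpha_i)\subseteq\{x:\alpha_i(B_1(x))>0\}$ (a defining property of the support) yields $\Psi_\eps\uparrow\Psi$ pointwise as $\eps\downarrow 0$, so $\Psi$ is LSC as a supremum of LSC functions. Finally, by MCT, $\mathscr I_{\Psi_\eps}(\vartheta)=\sup_n\int_\X F_{\chi_n,\phi_n}\,\mathrm d\vartheta$; each integrand being bounded continuous on the compact space $\X$, the map $\vartheta\mapsto\int F_{\chi_n,\phi_n}\,\mathrm d\vartheta$ is continuous in the Wasserstein (equivalently weak) topology, so $\mathscr I_{\Psi_\eps}$ is LSC on $\Mcal_1(\X)$.

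The main technical point is the algebraic identification of $F_{p,\phi}$ with a sum of $\Lambda(h,\cdot)$-terms and the verification that the resulting $h_k$ actually lies in $\mathcal H_{k+1}$; restricting to polynomials with $p(0)=0$ is essential, as otherwise the constant contribution would be proportional to $\Psi$ itself (which is what we are trying to prove LSC of). Everything else is routine MCT bookkeeping leveraging the continuity of the elementary building blocks $\Lambda(h,\cdot)$.
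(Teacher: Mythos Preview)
Your proof is correct and takes a genuinely different route from the paper. The paper argues directly with the structural convergence criterion \eqref{decompose-1}--\eqref{decompose-3}: given $\xi_n\to\xi$, it decomposes each $\gamma_n$ into pieces that (after spatial recentering) converge weakly to the $\alpha_i$'s plus a totally disintegrating remainder, and then checks the liminf inequality by hand for $\Psi$ and $\Psi_\eps$, invoking Fatou when $\xi_n$ has several orbits. Your approach instead writes $\Psi_\eps$ as an increasing limit of continuous functionals of the form $\Lambda(h,\cdot)$: you recognize that $\sum_i\int(\alpha_i\star\phi)^k\,\d\alpha_i=\Lambda(h_k,\cdot)$ with $h_k\in\mathcal H_{k+1}$, extend to $F_{\chi,\phi}$ by Weierstrass (the restriction $p(0)=0$ is exactly what keeps the constant term from sneaking $\Psi$ back in), and then approximate the indicator from below by monotone continuous $\chi_n,\phi_n$. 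This is more functional-analytic and arguably more in the spirit of the metric $\mathbf D$, since it ties lower semicontinuity directly to the defining continuity of the $\Lambda(h,\cdot)$. The paper's argument, on the other hand, is more geometric and makes transparent \emph{why} only lower semicontinuity holds: mass can disintegrate but not spontaneously appear. Both approaches extend the LSC of $\Psi_\eps$ to $\mathscr I_{\Psi_\eps}$ the same way (bounded LSC integrand on a compact space).
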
 
%\textcolor{blue}{\textbf{Similar to the Lemma before and so to \cite{MV2014}}}
\begin{proof}
	
		 If $\xi_n\to \xi=(\widetilde\alpha_i)_i$ in $\X$, we will show that $\liminf_{n\to\infty} \Psi(\xi_n) \geq \Psi(\xi)$. Suppose $\xi_n$ consists of a single orbit $\widetilde\gamma_n$. Then 
		  by the convergence criterion in $\X$ (recall \eqref{decompose-1}-\eqref{decompose-3}), for any arbitrary $\eta>0$ there exists $k\in \N$ such that, $\sum_{i>k} \alpha_i(\R^d)<\eta$ and 
		  \begin{align*}
			\gamma_n(\R^d)%&=\int_{\R^d}\gamma_n(\mathrm{d}x)=\sum_{i=1}^k\int_{\R^d}\alpha_{n,i}(\mathrm{d}x)+\int_{\R^d}\beta_n(\mathrm{d}x)\\
			&=\sum_{i=1}^k\int_{\R^d}(\alpha_{n,i}\star \delta_{a_{n,i}})(\mathrm{d}x)+\int_{\R^d}\beta_n(\mathrm{d}x)\\
			&\geq \sum_{i=1}^k\int_{\R^d}(\alpha_{n,i}\star \delta_{a_{n,i}})(\mathrm{d}x),
		\end{align*}		
		
		and $\alpha_{n,i}\star \delta_{a_{n,i}} \Rightarrow \alpha_i$ for $i=1,\dots, k$. Therefore, 
		$$\liminf_{n\rightarrow\infty}\gamma_n(\R^d)\geq\sum_{i=1}^k\int_{\R^d}\alpha_i(\mathrm{d}x)\geq\sum_{i\in I}\alpha_i(\R^d)-\eta,$$
		and since $\eta>0$ is arbitrary, 
		$\liminf_{n\rightarrow\infty}\gamma_n(\R^d)\geq \sum_{i\in I}\alpha_i(\R^d)=\Psi(\xi)$.		
		Now if $\xi_n$ consist of multiple orbits $(\widetilde{\gamma}_{n,i})_{i\in I}$, we can choose a subsequence such that for each $i$, $\widetilde{\gamma}_{n,i}$ has a limit $(\widetilde{\alpha}_{j,i})_{j\in J} \in \X$,
		and from the first case we have
		$\sum_{j}\alpha_{j,i}(\R^d)\leq \liminf_{n\rightarrow\infty}\gamma_{n,i}(\R^d)$ for any $i$. Then by
		 Fatou's lemma 
		 		$$\Psi(\xi)=\sum_{i}\sum_{j}\alpha_{j,i}(\R^d)\leq\sum_{i }\liminf_{n\rightarrow\infty}\gamma_{n,i}(\R^d)\leq\liminf_{n\rightarrow\infty}\sum_{i }\gamma_{n,i}(\R^d)=\liminf_{n\rightarrow\infty}\Psi(\xi_n)$$
		proving lower semicontinuity of $\Psi$.
		
		For $\Psi_{\eps}$, we proceed in a similar way. %Since for any ball $B \subset \R^d$, $\gamma_n(B)\geq \alpha_{n,
		%$$\{y\in\R^d:\alpha_{n,i}(B_y(1))>\eps|B_0(1)|\}\subset\{y\in\R^d:\gamma_n(B_y(1))>\eps|B_0(1)|\}.$$
		Since
		\begin{align*}
			\int_{\R^d}\1_{\{y\in\R^d:\gamma_n(B_1(y))>c_0\eps\}}(x)\alpha_{n,i}(\mathrm{d}x)&\geq\int_{\R^d}\1_{\{y\in\R^d:\alpha_{n,i}(B_1(y))>c_0 \eps \}}(x)\alpha_{n,i}(\mathrm{d}x)\\
			%&=\int_{\R^d}1_{\{y-a_{n,i}\in\R^d:(\alpha_{n,i}\star\delta_{a_{n,i}})(B_y(1))>\eps|B_0(1)|\}}(x-a_{n,i})(\alpha_{n,i}\star \delta_{a_{n,i}})(\mathrm{d}x)\\
			&=\int_{\R^d}\1_{\{y\in\R^d:(\alpha_{n,i}\star\delta_{a_{n,i}})(B_1(y))>c_0 \eps\}}(x)(\alpha_{n,i}\star \delta_{a_{n,i}})(\mathrm{d}x),
		\end{align*}
		for any $\eta>0$, 
		$$\liminf_{n\rightarrow\infty}\int_{\R^d}\1_{\{y\in\R^d:\gamma_n(B_1(y))>c_0\eps\}}(x)\gamma_n(\mathrm{d}x)\geq\sum_{i\in I}\int_{\R^d}\1_{\{y\in\R^d:\alpha_i(B_1(y))>c_0\eps\}}(x)\alpha_i(\mathrm{d}x)-\eta.$$
		Repeating the argument for $\Psi$ yields lower semicontinuity of $\Psi_\eps$ on $\X$, which in turn implies the lower semicontinuity of $\mathscr I_{\Psi_\eps}$ on $\Mcal_1(\X)$. 
	
\end{proof}

\subsection{The functional $\Phi$.}\label{subsec-Phi}
Recall that $V=\phi\star \phi$. We define $\Phi:\X\rightarrow \R$ by
\begin{align}
\label{function Phi}
\Phi(\xi)=\frac{\beta^2}{2}V(0)\left(1-\frac{1}{V(0)}\sum_{i\in I}\int_{\R^d\times\R^d}V(x_2-x_1)\prod_{j=1}^2\alpha_i(\mathrm{d}x_j)\right)
\end{align}
for $\xi=(\widetilde{\alpha}_i)_{i\in I}$. Again, because of shift-invariance of the integrand in the above display, $\Phi$ is well-defined on $\X$. Also, 
since $\phi$ is rotationally symmetric, for any $\alpha\in \Mcal_{\leq 1}(\R^d)$, by Cauchy-Schwarz inequality, 
\begin{equation}\label{function Phi-2}
\begin{aligned}
\int_{\R^{2d}} V(x_1-x_2) \, \alpha(\d x_1)\, \alpha(\d x_2) &=\int_{\R^{2d}} \alpha(\d x_1)\alpha(\d x_2) \, \int_{\R^d} \d z \, \phi(x_1-z) \, \phi(x_2-z) \\
&\leq \int_{\R^{2d}} \alpha(\d x_1)\alpha(\d x_2) \, \bigg[\int_{\R^d} \d z \phi^2(x_1-z)\bigg]^{1/2}  \,\, \bigg[\int_{\R^d} \d z \phi^2(x_2-z)\bigg]^{1/2} \\ &\leq \|\phi\|_2^2 = V(0)
\end{aligned}
\end{equation}
and as $\sum_{i\in I}\int_{\R^d\times\R^d}V(x_2-x_1)\prod_{j=1}^2\alpha_i(\mathrm{d}x_j)\leq  V(0)$ by the same argumentation, $\Phi(\cdot)\geq 0$. We will now state the following.
\begin{lemma}
	\label{lower semi cont for Phi}
	$\Phi$  is continuous on $\X$. %That is, if $\xi_n\rightarrow\xi$ in $\X$ for $n\rightarrow\infty$, then $\lim_{n\rightarrow\infty}\Phi(\xi_n)= \Phi(\xi)$.
\end{lemma}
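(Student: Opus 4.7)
The plan is to identify $\Phi$, up to an additive constant and a scalar, with one of the functionals $\Lambda(h,\cdot)$ appearing in the definition \eqref{eq-D} of the metric $\mathbf D$, and then deduce continuity from the convergence criterion on $(\X,\mathbf D)$. Specifically, define
\begin{equation*}
h\colon \R^d\times\R^d \to \R, \qquad h(x_1,x_2)=V(x_2-x_1),
\end{equation*}
and observe that $h\in \mathcal H_2$: translation-invariance is immediate, and since $V=\phi\star\phi$ is supported in the unit ball $B_1(0)$ (because $\phi$ is supported in $B_{1/2}(0)$), $h$ vanishes whenever $|x_1-x_2|\geq 1$, hence at infinity in the sense required. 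Then for every $\xi=(\widetilde\alpha_i)_i\in\X$,
\begin{equation*}
\Lambda(h,\xi)=\sum_{i\in I}\int_{\R^{2d}} V(x_2-x_1)\,\alpha_i(\d x_1)\,\alpha_i(\d x_2),
\qquad
\Phi(\xi)=\tfrac{\beta^2}{2}V(0)-\tfrac{\beta^2}{2}\Lambda(h,\xi),
\end{equation*}
so continuity of $\Phi$ reduces to continuity of $\xi\mapsto\Lambda(h,\xi)$.

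Next, I would verify that convergence in $(\X,\mathbf D)$ implies convergence of $\Lambda(h,\cdot)$ for every $h\in\mathcal H$. This is essentially built into the definition of $\mathbf D$: the weights $\frac{1}{2^r(1+\|h_r\|_\infty)}$ are chosen so that when $(h_r)_r$ is a dense sequence in $\bigcup_{k\geq 2}\mathcal H_k$ (each equipped with the separable sup-norm), convergence $\mathbf D(\xi_n,\xi)\to 0$ forces $\Lambda(h_r,\xi_n)\to\Lambda(h_r,\xi)$ for every $r$, and then a standard three-$\eps$ argument using $|\Lambda(h,\xi)-\Lambda(h,\xi_n)|\leq 2\|h-h_r\|_\infty + |\Lambda(h_r,\xi)-\Lambda(h_r,\xi_n)|$ (valid because each $\alpha_i$ has mass $\leq 1$ and $\sum_i \alpha_i(\R^d)\leq 1$) upgrades this to all of $\mathcal H$. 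This already proves the lemma.

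As a sanity check (and in case the representation of the metric above is considered too terse), I would redo the argument via the explicit decomposition \eqref{decompose-1}--\eqref{decompose-3}. Reducing as in the proof of Lemma \ref{lower semi-cont} to the case where $\xi_n$ consists of a single orbit $\widetilde\gamma_n$, fix $\eta>0$ and truncate $\xi=(\widetilde\alpha_i)_i$ at level $k$ with $\sum_{i>k}\alpha_i(\R^d)<\eta$. Write $\gamma_n=\sum_{i=1}^k\alpha_{n,i}+\beta_n$ with $\alpha_{n,i}\star\delta_{a_{n,i}}\Rightarrow \alpha_i$, the shifts $a_{n,i}$ diverging pairwise, and $\beta_n$ totally disintegrating. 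Expanding the double integral
\begin{equation*}
\int_{\R^{2d}} V(x_2-x_1)\,\gamma_n(\d x_1)\gamma_n(\d x_2)
=\sum_{i,j=1}^k\int V(x_2-x_1)\alpha_{n,i}(\d x_1)\alpha_{n,j}(\d x_2)+2\sum_{i=1}^k\int V\,\alpha_{n,i}\otimes\beta_n+\int V\,\beta_n\otimes\beta_n,
\end{equation*}
the diagonal $i=j$ terms converge to $\int V(x_2-x_1)\alpha_i(\d x_1)\alpha_i(\d x_2)$ by weak convergence of $\alpha_{n,i}\star\delta_{a_{n,i}}$ (and translation-invariance of $V(x_2-x_1)$); the off-diagonal terms vanish because $V\in\mathcal H_2$ and the shifts $a_{n,i}-a_{n,j}\to\infty$ push the supports apart; and the mixed and pure $\beta_n$ terms are bounded by $O(\eta)$ using \eqref{decompose-3}. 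Letting $\eta\downarrow 0$ gives $\Lambda(h,\gamma_n)\to\Lambda(h,\xi)$.

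The only mildly delicate point is the truncation step: one must control the tail $\sum_{i>k}\int V\alpha_i\alpha_i$ uniformly, which is immediate from \eqref{function Phi-2} giving $\int V\alpha_i\alpha_i\leq V(0)\alpha_i(\R^d)^2\leq V(0)\alpha_i(\R^d)$ and hence the tail is $\leq V(0)\eta$. With this, the argument goes through, and continuity (not merely lower semicontinuity, which fails for $\Psi$) follows because $V$ vanishes at infinity, so the mass escaping to spatial infinity contributes nothing to $\Lambda(h,\cdot)$.
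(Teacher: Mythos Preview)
Your proposal is correct, and your primary argument is actually more direct than the paper's. The paper proves Lemma~\ref{lower semi cont for Phi} by separately establishing lower and upper semicontinuity of $\Phi$, in each case rerunning the decomposition argument from Lemma~\ref{lower semi-cont}. You instead recognize that $\Phi(\xi)=\tfrac{\beta^2}{2}V(0)-\tfrac{\beta^2}{2}\Lambda(h,\xi)$ with $h(x_1,x_2)=V(x_2-x_1)\in\mathcal H_2$, so continuity reduces to that of $\Lambda(h,\cdot)$; since the functionals $\Lambda(h_r,\cdot)$ are by construction the ``coordinate maps'' of the metric $\mathbf D$ and the $(h_r)$ are dense in each $\mathcal H_k$, the three-$\eps$ argument gives continuity of $\Lambda(h,\cdot)$ for every $h\in\mathcal H$ essentially by definition. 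This is cleaner and immediately reusable for any test function in $\mathcal H$, whereas the paper's route is more hands-on and tied to the specific structure of $V$.

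One small comment on your second ``sanity check'' argument: the reduction ``as in the proof of Lemma~\ref{lower semi-cont}'' to single-orbit sequences $\widetilde\gamma_n$ is, strictly speaking, a lower-semicontinuity reduction (it uses Fatou over the index $i$ in the general case $\xi_n=(\widetilde\gamma_{n,i})_i$). For full continuity along arbitrary sequences in $\X$ one needs a bit more --- either a uniform tail control on $\sum_i\Lambda(h,\gamma_{n,i})$, or simply the observation that $\widetilde{\mathcal M}_1$ is dense in the compact space $\X$ and the single-orbit computation identifies the unique continuous extension. Since your first argument already settles the lemma, this is a minor point.
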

{

	The proof of Lemma \ref{lower semi cont for Phi} follows by showing lower and upper semicontinuity.
The arguments are similar to the proof of Lemma \ref{lower semi-cont} and are omitted to avoid repetition.
%Although the idea of the proof for lower semi-continuity is the same, we need an extra argument. Since the sequence $(\alpha_n)_{n\in\N}$, for $\alpha_n$ being sub-probabilities, may consist of some part $\beta_n$ that totally disintegrates into dust, it is clear that $\beta_n(\R^d)$ is constant in $n$. But for any set $A$ with compact support, $\lim_{n\to\infty}\beta_n(A)=0$. That means for the ball with radius $R$ and center $0$, $\lim_{R\to\infty}\lim_{n\to\infty}\beta_n(B_R(0))=0\neq\lim_{n\to\infty}\beta_n(\R^d)$. Thus the lower semi-continuity is based on the arguments that we already had plus the fact that the function $V$ has compact support. Indeed, the function $V$ ensures that if we have, in the definition of $\Phi$, the product of $\alpha$ and $\beta$ and one of them, assume $\beta$, totally disintegrates into dust, $\beta$ only has to measure those $x\in\R^d$ that are nearly the region where the mass of $\alpha$ is concentrated. Thus $\int_{\R^{2d}}V(x_1-x_2)\alpha_{n,i}(\d x_1)\beta_n(\d x_2)$ vanishes as $n$ goes to infinity. For a full proof we refer the reader to \cite{MV14}, where the continuity of a similar function has been proved.

}

\subsection{The partition function and the free energy.}\label{sec-free-energy}

For notational brevity, henceforth we will fix the disorder parameter $\beta>0$, and for any $t>0$, we will write 
$$
\begin{aligned}
&\hP_t=\hP_{\beta,t}, \qquad \bQ_t=\hP_{\beta,t} \, W_t^{-1}\\
& Z_{t}[x]=Z_{\beta,t}[x]=\bE_x\big[\exp\{\beta\mathscr H_t(W,B)\}\big],\qquad Z_t=Z_t[0]. 
\end{aligned}
$$ 
Likewise, $\widetilde\bQ_t\in \X$ will stand for the GMC endpoint orbit of $\bQ_t$ embedded in $\X$.

In this section, we will provide a decomposition of the ``free energy" $\frac 1T\log Z_T$ in terms of a martingale and an additive functional of $\widetilde\bQ_t$. Recall the map $\Phi$ from \eqref{function Phi}. 

%Recall that $\bQ_t=\widehat{\mathbb P}_t \, W_t^{-1}$ is the distribution of the end-point under the polymer measure. Throughout the rest of the article, we will write 
%$\widetilde{\bQ}_t\in \X$ for the single orbit, and we will work with the functional
%\begin{equation}\label{Phi-Q}
%\Phi(\widetilde{\bQ}_t)=\frac{\beta^2}{2}V(0)\left(1-\frac{1}{V(0)}\int_{\R^d\times\R^d}V(x_2-x_1)\bQ_t(\mathrm{d}x_1)\bQ_t(\mathrm{d}x_2)\right)
%\end{equation}
%and $\xi_{\hP}^{(t)}$ is again the element of $\X$ that consists of one orbit $\widetilde{\bQ}_t$, for $\bQ_t=\hP_t\circ W_t^{-1}$.
%Recall that $Z_T$ is the polymer partition function. 
\begin{lemma}[Rewrite of the free energy]
	\label{Rewrite of the partition function}
	We can write
	\begin{equation}\label{rewrite-partition}
	\begin{aligned}
	&\frac 1T \log Z_T=\frac 1 TM_T+\frac 1 T  \int_0^T\Phi(\widetilde{\bQ}_t) \d t 
	\end{aligned}
	\end{equation}
	where
	
	$$
	%\begin{aligned}
	M_T=\beta\int_0^T\int_{\R^d}E^{\hP_t}\big[\phi(y-W_t)\big]\dot{B}(t,y) \d y  \,\d t\quad\mbox{and}\\
	%&\Phi(\widetilde{\bQ}_t)=\frac{\beta^2}{2}V(0)\left(1-\frac{1}{V(0)}\int_{\R^d\times\R^d}V(x_2-x_1)\bQ_t(\mathrm{d}x_1)\bQ_t(\mathrm{d}x_2)\right)
$$
is a square integrable martingale. In particular, 
	\begin{equation}\label{M_T}
        \begin{aligned}
	&\frac 1 T \E[\log Z_T]=\frac 1 T \int_0^T \E\big[\Phi(\widetilde\bQ_t)\big] \d t \quad\mbox{and}\\%\quad
	 &\frac 1 T \log Z_T- \frac 1 T \int_0^T \Phi(\widetilde\bQ_t) \d t \to 0\quad\mbox{a.s.}\,\,-\P.
	\end{aligned}
	\end{equation}
	\end{lemma}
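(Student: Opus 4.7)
The plan is to derive \eqref{rewrite-partition} from an It\^o/Feynman-Kac analysis of the quenched partition function $Z_t = \bE_0[\exp\{\beta\mathscr H_t(W)\}]$, viewed as a functional of the white noise $\dot B$. First, I would regard $t\mapsto \mathscr H_t(W)$, for each fixed Brownian path $W$, as a continuous martingale with quadratic variation $[\mathscr H_\cdot(W)]_t = t\,V(0)$ (since $\int\phi(W_s-y)^2\,dy = V(0)$) and cross-variation $[\mathscr H_\cdot(W^{\ssup 1}),\mathscr H_\cdot(W^{\ssup 2})]_t=\int_0^t V(W^{\ssup 1}_s-W^{\ssup 2}_s)\,ds$. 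Applying It\^o to $e_t(W):=\exp\{\beta\mathscr H_t(W)\}$ and then taking $\bE_0$ (a stochastic Fubini, justified by the Gaussian bounds for the relevant second moments), one obtains
\begin{equation*}
dZ_t \;=\; \beta\int_{\R^d}\bE_0\bigl[e_t(W)\,\phi(W_t-y)\bigr]\,\dot B(t,y)\,dy\,dt \;+\; \frac{\beta^2 V(0)}{2}\,Z_t\,dt .
\end{equation*}

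Next, I would apply It\^o's formula to $\log Z_t$. Using $\hP_t(dW)=Z_t^{-1}e_t(W)\,\bP_0(dW)$, the martingale increment becomes $\beta Z_t\int E^{\hP_t}[\phi(W_t-y)]\,\dot B(t,y)\,dy\,dt$, while the quadratic-variation correction is
\begin{equation*}
\tfrac{1}{2}Z_t^{-2}\,d[Z]_t \;=\; \tfrac{\beta^2}{2}\int_{\R^d} E^{\hP_t}[\phi(W_t-y)]^2\,dy\,dt \;=\; \tfrac{\beta^2}{2}\int_{\R^{2d}} V(x-x')\,\bQ_t(dx)\,\bQ_t(dx')\,dt,
\end{equation*}
by Fubini and the identity $\int\phi(x-y)\phi(x'-y)\,dy = V(x-x')$. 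Combining these and integrating from $0$ to $T$ (with $Z_0=1$) yields $\log Z_T = M_T + \int_0^T\!\bigl[\tfrac{\beta^2}{2}V(0)-\tfrac{\beta^2}{2}\int V(x-x')\bQ_t(dx)\bQ_t(dx')\bigr]dt$. Since at each finite $t$ the orbit $\widetilde\bQ_t$ consists of a single equivalence class, the bracket is exactly $\Phi(\widetilde\bQ_t)$ by \eqref{function Phi}, and symmetry of $\phi$ lets me rewrite the stochastic integrand as $E^{\hP_t}[\phi(y-W_t)]$. Dividing by $T$ gives \eqref{rewrite-partition}.

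It remains to verify that $M_T$ is a square-integrable martingale and that $T^{-1}M_T\to 0$ almost surely. The quadratic variation
\begin{equation*}
\langle M\rangle_T \;=\; \beta^2\int_0^T\!\int_{\R^d} E^{\hP_t}[\phi(y-W_t)]^2\,dy\,dt \;=\; \beta^2\!\int_0^T\!\int V(x-x')\,\bQ_t(dx)\bQ_t(dx')\,dt \;\le\; \beta^2 V(0)\,T
\end{equation*}
by \eqref{function Phi-2}, which gives both the $L^2$-boundedness on finite intervals and the a.s.\ bound $|M_T|\le C\sqrt{\langle M\rangle_T\log\log\langle M\rangle_T}=O(\sqrt{T\log\log T})$ via the Dambis--Dubins--Schwarz time change and the usual LIL for Brownian motion. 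Hence $T^{-1}M_T\to 0$ a.s., which yields the second line of \eqref{M_T}, and taking $\E[\cdot]$ in \eqref{rewrite-partition} (the martingale having zero mean) gives the first line.

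The main technical point to be careful about is the stochastic Fubini used when passing $\bE_0$ through the It\^o integral against $\dot B$; this is routine but must be checked given that $\mathscr H_t(W)$ depends on both $W$ and the noise. All other steps are standard It\^o calculus combined with the algebraic identity $\int\phi(x-y)\phi(x'-y)\,dy=V(x-x')$ that converts the It\^o correction into the replica overlap appearing in $\Phi$.
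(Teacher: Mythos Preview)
Your proposal is correct and follows essentially the same route as the paper: apply It\^o to $Z_t$, then to $\log Z_t$, identify the drift as $\Phi(\widetilde\bQ_t)$ via $\int\phi(x-y)\phi(x'-y)\,dy=V(x-x')$, and bound $\langle M\rangle_T\le \beta^2 V(0)\,T$ to conclude $T^{-1}M_T\to 0$. The only cosmetic difference is that you invoke DDS and the LIL explicitly, whereas the paper simply states $M_T/T\to 0$ from the linear bound on the bracket.
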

%\textcolor{blue}{\textbf{Similar to section 4 of \cite{CC2013}}}
\begin{proof}
	Recall our earlier notation
	\begin{equation}\label{scr-H}
	\mathscr H_t(W,B)=\int_0^t\int_{\R^d}\phi(y-W_s)\dot{B}(s,y)\d y \,\, \mathrm{d}s.
        \end{equation}
	 We now apply It\^{o}'s formula to $Z_t=\bE_0\big[\e^{\beta \mathscr H_t(W,B)}\big]$
	to get
	\begin{align*}
	\mathrm{d}Z_t=\bE_0\bigg[\beta\int_{\R^d}\e^{\beta \mathscr H_t(W,B)}\phi(y-W_t)\dot{B}(t,y) \d y \bigg] \d t +\bE_0\bigg[\frac{\beta^2}{2}\int_{\R^d}\e^{\beta \mathscr H_t(W,B)}\phi(y-W_t)^2\mathrm{d}y\bigg]\mathrm{d}t.%\right].
	\end{align*}
	We can also compute the quadratic variation for $Z_t$ as 
	\begin{align*}
	\mathrm{d}\langle Z_t\rangle&=\mathrm{d}\bigg\langle  \bE_0\bigg[\beta \int_{\R^d}\, \e^{\beta \mathscr H_t(W,B)}\phi(y-W_t)\dot{B}(t,y) \d y \bigg]\bigg{\rangle}
	&=\beta^2\bE_0^{\otimes }\left[\int_{\R^d}\, \e^{\beta(\mathscr H_t(W,B)+\mathscr H_t(W^{\prime},B))}\phi(y-W_t)\phi(y-W_t^{\prime})\right] \d t\\
	&=\beta^2\bE_0^{\otimes }\left[V(W_t-W_t^{\prime})\, \e^{\beta (\mathscr H_t(W,B)+\mathscr H_t(W^{\prime},B))}\right] \d t
	\end{align*}
	where $W^{\prime}$ is another Brownian motion independent of $W$. We again apply It\^{o}'s formula to $\log Z_t$ and use the last display to get 
	\begin{align*}
	\mathrm{d}&\log Z_t=\frac{1}{Z_t}\mathrm{d}Z_t-\frac{1}{2Z_t^2}\mathrm{d}\langle Z_t\rangle\\
	&=\beta E^{\hP_t}\left[\int_{\R^d}\phi(y-W_t)\dot{B}(t,y) \d y\right]\mathrm{d}t+\frac{\beta^2}{2}E^{\hP_t}\left[\int_{\R^d}\phi(y-W_t)^2\mathrm{d}y\right]\mathrm{d}t-\frac{\beta^2}{2}E^{\hP_t^{\otimes }}\left[V(W_t-W_t^{\prime})\right]\mathrm{d}t.
	\end{align*}
	Since $\int_{\R^d} \phi(y-W_t)^2 \d y=\int_{\R^d} \phi^2(y)\d y= V(0)$, then 
	\begin{align*}
	{\log Z_T}&= \int_0^T\beta E^{\hP_t}\left[\int_{\R^d}\phi(y-W_t)\dot{B}(t,y)\d y\right]\mathrm{d}t+\frac{\beta^2 T V(0)}2
	-\int_0^T\frac{\beta^2}{2}E^{\hP_t^{\otimes }}\left[V(W_t-W_t^{\prime})\right]\mathrm{d}t\\
	&=\beta \int_0^T \int_{\R^d}E^{\hP_t}\left[\phi(y-W_t)\right]\dot{B}(t,y) \, \d y \, \d t\\
	&\qquad +\int_0^T \d t \bigg[\frac{\beta^2}{2}V(0)\left(1-\frac{1}{V(0)}\int_{\R^d\times\R^d}V(x_2-x_1)\hP_t(W_t\in\mathrm{d}x_1)\hP_t(W_t^{\prime}\in\mathrm{d}x_2)\right)\bigg] \\
	&=M_T+ \int_0^T \Phi(\widetilde\bQ_t)\d t
	\end{align*}
	proving \eqref{rewrite-partition}. 
	
	Now the first display in \eqref{M_T} readily follows since $M_T$ is a martingale, whose quadratic variation is given by
%Note that the quadratic variation of $M_T$ is given by
	\begin{align*}
	{\mathrm{d}\langle M_T\rangle}=\int_0^T\int_{\R^d}\beta^2\left(E^{\hP_t}\left[\phi(y-W_t)\right]\right)^2\mathrm{d}y\mathrm{d}t
	&\leq\int_0^T\int_{\R^d}\beta^2E^{\hP_t}\left[\phi(y-W_t)^2\right]\mathrm{d}y\mathrm{d}t\\
	&= \beta^2 \int_0^T \d t E^{\hP_t}\bigg[\int_{\R^d} \d y\phi(y-W_t)^2\bigg] \\
		%&\leq\beta^2\|\phi\|_\infty^2\int_0^T\int_{\R^d} E^{\hP_t}\left[\1_{\{|y-W_t|\leq 1/2\}}\right]\mathrm{d}y\mathrm{d}t\\
	%&=\int_0^T\beta^2E^{\hP_t}\left[\phi(0)^2\int_{\R^d}1_{\{|y-W_t|\leq 1/2\}}\mathrm{d}y\right]\mathrm{d}t\\
	&=T\beta^2 V(0).
	%&<\infty
	\end{align*}
Since $M_T/T\to 0$ almost surely,  the second display in \eqref{M_T} follows from \eqref{rewrite-partition}. % since $M_T/T \to 0$ almost surely. %and the observation that as $M_T$ converges almost surely,  $M_T/T \to 0$ a.s. too.
\end{proof}

We will end this section with a corollary which will be used later. For the map $\Phi : \X \to \R$, we define the functional, $\mathscr I_\Phi: \Mcal_1(\X) \to \R$ as 
\begin{equation}\label{scr-I}
\mathscr I_\Phi(\vartheta)= \int_{\X} \Phi(\xi) \, \vartheta(\d\xi).
\end{equation}
Again since $\Phi$ is continuous on the compact metric space $\X$, $ \mathscr I_\Phi(\cdot)$ is continuous on $\Mcal_1(\X)$. %map, since $\Phi$ is itself continuous on the compact metric space $\X$ (recall Lemma \ref{lower semi cont for Phi}). %space $\X$, and thus the functional  is continuous too. 
\begin{cor}\label{cor-I-Phi}
With 
$$
\nu_T= \frac 1 T \int_0^T \delta_{\widetilde\bQ_t} \,\, \d t \in \Mcal_1(\X) %\qquad\mbox{where}\,\,\, \bQ_t=\widehat{\mathbb P}_t \,W_t^{-1} \,\, \mbox{and}\,\,\,\widetilde\bQ_t\in \X, 
$$
we have
$$
\begin{aligned}
&\frac 1 T \mathbf E[ \log Z_T]= \mathbf E[\mathscr I_\Phi(\nu_T)], \\
&\liminf_{T\to\infty} \frac 1 T \log Z_T=\liminf_{T\to\infty} \mathscr I_\Phi(\nu_T)  \quad \mathrm{a.s.}
\end{aligned}
$$
\end{cor}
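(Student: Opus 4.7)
The plan is to deduce this corollary directly from Lemma \ref{Rewrite of the partition function}. The first step is to observe that by the definition of $\nu_T$ as the Ces\`aro-averaged empirical measure of the endpoint orbits $\widetilde\bQ_t$, and the definition of $\mathscr I_\Phi$ in \eqref{scr-I}, one has the pathwise identity
\begin{equation*}
\mathscr I_\Phi(\nu_T) = \int_\X \Phi(\xi)\, \nu_T(\d\xi) = \frac{1}{T}\int_0^T \Phi(\widetilde\bQ_t)\, \d t,
\end{equation*}
which is just an interchange of the deterministic integral defining $\nu_T$ with the $\X$-integral against the (bounded, continuous by Lemma \ref{lower semi cont for Phi}) function $\Phi$.

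With this in hand, the first claim follows at once by rewriting the identity \eqref{rewrite-partition} of Lemma \ref{Rewrite of the partition function} as
\begin{equation*}
\frac{1}{T}\log Z_T \;=\; \frac{1}{T}M_T + \mathscr I_\Phi(\nu_T),
\end{equation*}
taking expectation under $\mathbf P$, and using that $M_T$ is a mean-zero square-integrable martingale (as proved in Lemma \ref{Rewrite of the partition function}), so that $\mathbf E[M_T]=0$.

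For the second assertion, the same decomposition gives
\begin{equation*}
\frac{1}{T}\log Z_T - \mathscr I_\Phi(\nu_T) \;=\; \frac{1}{T}M_T,
\end{equation*}
and the second display in \eqref{M_T} (established in the proof of Lemma \ref{Rewrite of the partition function} via the quadratic variation bound $\mathrm d\langle M\rangle_t \le \beta^2 V(0)\,\d t$ and the strong law of large numbers for $L^2$ martingales with linearly growing bracket) asserts exactly that $T^{-1}M_T\to 0$ almost surely. Taking $\liminf_{T\to\infty}$ on both sides then yields the desired equality of liminfs almost surely. I do not anticipate any genuine obstacle here, since all the work has already been done in Lemma \ref{Rewrite of the partition function} and in the observation that $\Phi$ is bounded and continuous so that the time-integral and the $\nu_T$-integral coincide; the corollary is essentially a restatement packaging the free-energy representation in terms of the empirical measure $\nu_T$, which is the form needed for the variational analysis in Section \ref{section time dependence}.
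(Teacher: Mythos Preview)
Your proof is correct and follows exactly the same approach as the paper, which simply notes that both statements follow immediately from Lemma \ref{Rewrite of the partition function} and the definition of $\nu_T$. You have merely spelled out the one-line observation $\mathscr I_\Phi(\nu_T)=\frac{1}{T}\int_0^T \Phi(\widetilde\bQ_t)\,\d t$ and the use of $\E[M_T]=0$ and $M_T/T\to 0$ a.s.\ from \eqref{M_T}.
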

\begin{proof}
Both statements follow immediately from Lemma \ref{Rewrite of the partition function} and the definition of $\nu_T$.
\end{proof}

\section{Dynamics on Elements of $\X$.}
\label{section time dependence}

\subsection{Continuity of the transition probabilities.}

Recall the notation for $\mathscr H_t(W,B)$ from \eqref{scr-H}. We fix $t>0$, and for any element $\xi=(\widetilde\alpha_i)_i \in \X$, we set %For any sub-probability measure $\alpha\in \Mcal_{\leq 1}(\R^d)$ we set
\begin{equation}\label{alpha after time t}
        \alpha_i^{\ssup t}(\mathrm{d}x):=\frac{1}{\mathscr F_t(\xi)+\E\big[ Z_t-\mathscr F_t(\xi)\big]}\int_{\R^d}\alpha_i (\mathrm{d}z)\bE_z\bigg [\1_{\{W_t\in\mathrm{d}x\}}\, \exp\big\{\beta\mathscr H_t(W,B)\big\}\bigg] 
	\end{equation}
	where 
 
	\begin{equation}\label{scr-F}
	\begin{aligned}
	&\mathscr F_t(\alpha_i)=\int_{\R^d}\int_{\R^d}\alpha_i(\mathrm{d}z) \bE_z\bigg[\1_{\{W_t\in\mathrm{d}x\}}\,\, \exp\big\{\beta\mathscr H_t(W,B)\big\}\bigg] \qquad\mbox{and} \\
	&\mathscr F_t(\xi)=\sum_i \mathscr F_t(\alpha_i).
	\end{aligned}
        \end{equation}
        We remark that  for any $a\in \R^d$ and $t>0$, $\mathscr F_t(\alpha_i)\overset{\ssup d}=\mathscr F_t(\alpha_i\star\delta_a)$ and 
$$
(\alpha_i\star\delta_a)^{\ssup t}(\mathrm{d}x)\overset{(d)}{=}(\alpha_i^{\ssup t}\star\delta_a)(\mathrm{d}x),
$$
and for any $r,t>0$, $(\widetilde\delta_0)^{\ssup t}\overset{\ssup d}=\widetilde\bQ_t$ and 
$
\widetilde{\bQ}_t^{\ssup r}\overset{\ssup d}=\widetilde{\bQ}_{t+r}
$
 since
	\begin{align*}
	&\bQ_{t+r}(\mathrm{d}x)\\&= \frac{1}{Z_{t+r}}\int_{\R^d}\bE_0\bigg{[}\e^{\beta\int_0^t\phi(y-W_s)\dot{B}(s,\mathrm{d}y)\mathrm{d}s}\1_{\{W_t\in\mathrm{d}z\}}\bE_0\bigg(\e^{\beta\int_t^{t+r}\phi(y-W_s)\dot{B}(s,\mathrm{d}y)\mathrm{d}s}\1_{\{W_{t+r}-W_t\in \mathrm{d}(x-z)\}}\bigg{|}\mathcal{G}_t\bigg)\bigg{]} \\
	&\overset{\ssup d}=\int_{\R^d}\frac 1 {\mathscr F_r^\prime(\widetilde{\bQ}_t)} \bE_z\bigg[\e^{\beta\int_0^r\phi(y-W_s^{\prime})\dot{B}^{}(s,\mathrm{d}y)\mathrm{d}s}\1_{\{W_r^{\prime}\in \mathrm{d}x\}}\bigg]\bQ_{t}(\mathrm{d}z),
        \end{align*}
        where $\mathcal G_t$ is the $\sigma$-algebra generated by the Brownian path $W$ until time $t$ and $\mathscr F_r^\prime$ is defined as $\mathscr F_r$, but w.r.t. a Brownian path $W^\prime$ independent of $W$.

     Then \eqref{alpha after time t} and the above remarks, for any $t>0$ and $\xi=(\widetilde\alpha_i)_i\in \X$, define a transition kernel $\pi_t(\xi, \cdot)\in \Mcal_1(\X)$ as
\begin{align}
	\label{pi-function}
	\pi_t(\xi,\mathrm{d}\xi^{\prime})=\P\big[\xi^{\ssup t}\in\mathrm{d}\xi^\prime |\xi\big] \qquad \mbox{where}\quad \xi^{\ssup t}= \big(\widetilde\alpha_i^{\ssup t}\big)_{i\in I} \in \X.
	\end{align}

Here is the main result of this section. 

\begin{theorem}
	\label{continuity of the pi-function}
	For any fixed $t>0$, the map
	\begin{align*}
		\pi_t:\X&\rightarrow\mathcal{M}_1(\X)\\
		\xi&\mapsto\pi_t(\xi,\cdot)
	\end{align*}
	is continuous with respect to the Wasserstein metric on $\Mcal_1(\X)$. 
\end{theorem}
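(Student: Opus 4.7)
The plan is to couple $\xi_n^{\ssup t}$ and $\xi^{\ssup t}$ via the common driving white noise $B$, which yields
$$\mathscr W\bigl(\pi_t(\xi_n,\cdot),\pi_t(\xi,\cdot)\bigr)\le \E\bigl[\mathbf D(\xi_n^{\ssup t},\xi^{\ssup t})\bigr].$$
Since $|\Lambda(h,\cdot)|\le \|h\|_\infty$ uniformly on $\X$, dominated convergence applied to the series defining $\mathbf D$ reduces the continuity of $\pi_t$ to showing that, for each $h\in\mathcal H_k$ with $k\ge 2$,
$$\E\bigl[|\Lambda(h,\xi_n^{\ssup t})-\Lambda(h,\xi^{\ssup t})|\bigr]\to 0$$
whenever $\xi_n\to\xi$ in $\X$.

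Next, via a Fatou-type argument parallel to the proof of Lemma \ref{lower semi-cont}, I reduce to the case where $\xi_n=\widetilde\gamma_n$ is a single orbit, and invoke the convergence criterion \eqref{decompose-1}--\eqref{decompose-3} to decompose
$$\gamma_n=\sum_{i=1}^{k_0}\alpha_{n,i}+\beta_n,\qquad \alpha_{n,i}\star\delta_{a_{n,i}}\Rightarrow\alpha_i,$$
with widely separated shifts $a_{n,i}$, totally disintegrating $\beta_n$, and arbitrarily small residual mass $\eta$. Both the numerator $\widetilde\gamma_n^{\ssup t}(\mathrm d x):=\int\gamma_n(\mathrm dz)\bE_z[\1_{\{W_t\in\mathrm dx\}}e^{\beta\mathscr H_t}]$ and the denominator $N(\gamma_n)=\mathscr F_t(\gamma_n)$ then split additively over the pieces, producing a multi-index expansion of $\Lambda(h,\xi_n^{\ssup t})$ indexed by $\vec j\in\{0,1,\dots,k_0\}^k$.

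The analytic heart splits into three subclaims. First, for \emph{diagonal} multi-indices $\vec j=(i,\dots,i)$ with $i\in\{1,\dots,k_0\}$, translation invariance of $h$ lets me re-center by $a_{n,i}$ and reduce to continuity of the tilted Brownian expectation against weakly convergent input measures at a fixed realization of $\dot B$, which is standard since $\phi$ is continuous and compactly supported. Second, the \emph{off-diagonal} terms mixing two seeds $i\ne j$ vanish in $L^1$: because $V=\phi\star\phi$ is supported in $B_1(0)$, the Gaussian fields driving seeds near $a_{n,i}$ and $a_{n,j}$ become asymptotically uncorrelated as $|a_{n,i}-a_{n,j}|\to\infty$, and Brownian tail bounds combined with Gaussian exponential-moment estimates confine the bulk of each tilted mass to a bounded neighbourhood of the corresponding seed, so the vanishing-at-infinity property of $h\in\mathcal H_k$ annihilates the cross-contribution. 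Third, terms involving the residue $\beta_n$ are controlled via a second-moment computation in the spirit of the forthcoming Proposition \ref{lemma-step5}, using the total disintegration in \eqref{decompose-3} together with the compact support of $V$ to bound all $\beta_n$-cross-terms by $O(\eta)$.

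The main obstacle is the random normalizer $N(\gamma_n)$. Because $\Psi$ is only lower semicontinuous, $N$ is itself discontinuous on $\X$, so no naive ratio estimate is available. I will address this by exploiting the decomposition
$$N(\gamma_n)=\sum_{i=1}^{k_0}\mathscr F_t(\alpha_{n,i})+\mathscr F_t(\beta_n),$$
showing via the same noise-decoupling argument that each $\mathscr F_t(\alpha_{n,i})$ converges in distribution to the analog driven by an independent copy of the noise, while $\mathscr F_t(\beta_n)$ concentrates around $(1-\sum_i\alpha_i(\R^d))\E Z_t$ up to $O(\eta)$ by a variance bound. The uniform inverse-moment bound \eqref{rmk-moment-total-mass} together with Cauchy--Schwarz then lifts numerator/denominator convergence to $L^1$-convergence of the ratio, and the proof concludes by letting first $n\to\infty$ and then $\eta\to 0$.
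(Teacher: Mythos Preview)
Your plan captures the same core machinery as the paper's proof: the coupling bound $\mathscr W(\pi_t\xi_n,\pi_t\xi)\le\E[\mathbf D(\xi_n^{\ssup t},\xi^{\ssup t})]$, reduction to $L^1$ convergence of $\Lambda(h,\cdot)$, the decomposition \eqref{decompose-1}--\eqref{decompose-3}, the second-moment/decoupling estimate for the disintegrating part (exactly Proposition~\ref{lemma-step5}), and inverse-moment bounds plus Cauchy--Schwarz to control the random normalizer. The organizational difference is that the paper does not do a multi-index expansion of $(\gamma_n^{\ssup t})^{\otimes k}$; instead it factors $\Lambda(h_r,\xi_n^{\ssup t})=\big[\mathscr F_t(\xi_n)+\E[Z_t-\mathscr F_t(\xi_n)]\big]^{-k_r}\Lambda(h_r,\mathscr A_t(\xi_n))$ and then splits via triangle inequality into Proposition~\ref{prop-step1} (numerator varies) and Proposition~\ref{prop-step2} (denominator varies, reduced to Proposition~\ref{prop-step3}). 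This separation means the random normalizer is handled once, rather than inside every term of your expansion; it buys some bookkeeping economy but is not conceptually different.

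Two places where your sketch is loose. First, the ``Fatou-type reduction to single orbits'' does not work as stated: Fatou yields semicontinuity, not the two-sided $L^1$ convergence you need, and the common denominator $\mathscr F_t(\xi_n)+\E[Z_t-\mathscr F_t(\xi_n)]$ couples all orbits of $\xi_n$ together. The paper also effectively works in the single-orbit setting when invoking \eqref{decompose-1}--\eqref{decompose-3} (see the proof of Proposition~\ref{prop-step3}), but this is legitimate because $\widetilde{\Mcal}_1$ is dense in $\X$ and the target map is already defined on all of $\X$; it is not a Fatou argument. Second, your statement that each $\mathscr F_t(\alpha_{n,i})$ ``converges in distribution to the analog driven by an independent copy of the noise'' is not the right formulation for a coupling argument: you are computing both $\xi_n^{\ssup t}$ and $\xi^{\ssup t}$ with the \emph{same} realization of $\dot B$, so distributional decoupling of the pieces is neither available nor needed. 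What the paper proves (Lemma~\ref{lemma-step4}) is straight $L^2$ convergence $\E\big[(\overline{\mathscr F}_t(\xi_n)^p-\overline{\mathscr F}_t(\xi)^p)^2\big]\to 0$ for the matched pieces under the common noise, which is exactly what lifts to $L^1$ convergence of the ratio via Cauchy--Schwarz.
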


The rest of the section is devoted to the proof of Theorem \ref{continuity of the pi-function}. 

\noindent{\bf{Proof of Theorem \ref{continuity of the pi-function}.}} We want to show that if $\xi_n \to \xi$ in $(\X,\mathbf D)$, then for any fixed $t>0$, $\pi_t \xi_n \to \pi_t \xi$ in $(\Mcal_1(\X), \mathscr W)$ where $\mathscr W$ is the Wasserstein metric  $\mathscr W$ (recall \eqref{Wasserstein-metric}). 
Since $\mathscr W(\pi_t\xi_n, \pi_t \xi) \leq \mathbf E\big[\mathbf D(\pi_t \xi_n,\pi_t\xi)\big]$, by definition of the metric $\mathbf D$, it suffices to show that for any $h_r\in \mathcal H_{k_r}$ and $r\geq 1$, 
\begin{align}
	\begin{split}
		\label{claim1-theorem}
		\E\bigg[ \bigg| \Lambda(h_r,\xi_n^{\ssup t})- \Lambda(h_r,\xi^{\ssup t})\bigg| \bigg]=
		\E\bigg{|}&\sum_{\widetilde{\alpha}\in\xi_n^{\ssup t}}\int_{(\R^d)^{k_r}}h_r(x_1,...,x_{k_r})\prod_{i=1}^{k_r}\alpha(\mathrm{d}x_i)\\
		&-\sum_{\widetilde{\alpha}\in\xi^{\ssup t}}\int_{(\R^d)^{k_r}}h_r(x_1,...,x_{k_r})\prod_{i=1}^{k_r}\alpha(\mathrm{d}x_i)\bigg{|}\\
		&\longrightarrow 0\qquad\mbox{as}\quad n\to\infty.
		\end{split}
	\end{align}
	%\textcolor{blue}{
	Note that by \eqref{alpha after time t}, the first term $\Lambda(h_r,\xi_n^{\ssup t})$ in the above display can be rewritten as 
	$$
	\begin{aligned}
	&\sum_{\widetilde{\alpha}\in\xi_n}\int_{(\R^d)^{k_r}}h_r(x_1,...,x_{k_r})\prod_{i=1}^{k_r}\alpha^{\ssup t}(\mathrm{d}x_i) \\
	&=\sum_{\widetilde{\alpha}\in\xi_n}\int_{(\R^d)^{k_r}}h_r(x_1,...,x_{k_r})\prod_{i=1}^{k_r}\bigg(\frac 1 {\mathscr F_t(\xi_n)+ \E[Z_t- \mathscr F_t(\xi_n)]} \int_{\R^d} \alpha(\d z_i) \mathbb E_{z_i}\big[\1_{\{W_t\in \d x_i\}}\, 
	\exp\big\{\beta\mathscr H_t(W,B)\big\}\big]\bigg) \\
	&= \bigg[\frac {1} {\mathscr F_t(\xi_n)+ \E[Z_t- \mathscr F_t(\xi_n)]}\bigg]^{k_r} \,\, \sum_{\widetilde{\alpha}\in\xi_n}\int_{(\R^d)^{k_r}}h_r(x_1,...,x_{k_r})\prod_{i=1}^{k_r} \bigg(\int_{\R^d} \alpha(\d z_i) \mathbb E_{z_i}\big[\1_{\{W_t\in \d x_i\}}\, 
	\exp\big\{\beta\mathscr H_t(W,B)\big\}\big]	\bigg) \\
	&=\bigg[\frac {1} {\mathscr F_t(\xi_n)+ \E[Z_t- \mathscr F_t(\xi_n)]}\bigg]^{k_r} \,\, \Lambda(h_r,\mathscr A_t(\xi_n)),
	\end{aligned}
	$$
	where in the third identity we used the notation 
$$
\mathscr A_t(\xi)=((\mathscr F_t(\xi)+ \E[Z_t- \mathscr F_t(\xi)])\alpha_{i}^{\ssup t})_{i\in I},
$$
 recall the definition of $\widetilde\alpha^{\ssup t}_i$ from \eqref{alpha after time t} and that of $\Lambda(h,\xi)$ from \eqref{Lambda-def}. Note that if $\Psi(\xi)=0$, the second term in \eqref{claim1-theorem} is zero and since $\xi_n\rightarrow\xi$, by a similar argument as in the proof of Proposition \ref{prop-step1} below, $\Lambda(h_r,\xi^{\ssup t}_n)\rightarrow 0$ as $n\rightarrow\infty$. Thus we restrict to the case where $\Psi(\xi),\Psi(\xi_n)>0$.

In view of the last computation, then the claim \eqref{claim1-theorem} follows by triangle inequality once we prove the following two facts.

	\begin{prop}\label{prop-step1}
	For any $k_r\geq 2$, 
	\begin{equation}\label{prop-step1-eq}
	\lim_{n\to\infty}\E\bigg[\bigg(\frac {1} {\mathscr F_t(\xi)+ \E[Z_t- \mathscr F_t(\xi)]}\bigg)^{k_r}	\,\, \bigg|\Lambda(h_r,\mathscr A_t(\xi_n)) - \Lambda(h_r, \mathscr A_t(\xi))\bigg|\,\, \bigg] =0.
	\end{equation}
	\end{prop}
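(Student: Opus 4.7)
The plan is to decouple the random prefactor from the $\Lambda$-difference via Cauchy--Schwarz:
\begin{equation*}
\mathbf{E}\!\left[\bigl(\mathscr{F}_t(\xi)+\mathbf{E}[Z_t-\mathscr{F}_t(\xi)]\bigr)^{-k_r}\bigl|\Lambda(h_r,\mathscr{A}_t(\xi_n))-\Lambda(h_r,\mathscr{A}_t(\xi))\bigr|\right]\le A_t^{1/2}\,B_n^{1/2},
\end{equation*}
with $A_t=\mathbf{E}[(\mathscr{F}_t(\xi)+\mathbf{E}[Z_t-\mathscr{F}_t(\xi)])^{-2k_r}]$ and $B_n=\mathbf{E}\bigl[|\Lambda(h_r,\mathscr{A}_t(\xi_n))-\Lambda(h_r,\mathscr{A}_t(\xi))|^2\bigr]$. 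The factor $A_t$ is handled by the same two-case argument that produces \eqref{rmk-moment-total-mass}: if $\Psi(\xi)\le\tfrac12$ then the denominator is bounded below by $\tfrac12\mathbf{E}Z_t$, while if $\Psi(\xi)>\tfrac12$ then Jensen applied pathwise to the probability measure $\bar\mu=\Psi(\xi)^{-1}\sum_i\alpha_i$ gives $\mathbf{E}[\mathscr{F}_t(\xi)^{-2k_r}]\le \Psi(\xi)^{-2k_r}\mathbf{E}[Z_t^{-2k_r}]\le 2^{2k_r}e^{2k_r^2\beta^2 tV(0)}$. Hence $A_t$ is a finite, $\xi$-measurable quantity, and the task reduces to showing $B_n\to 0$.

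To analyse $B_n$, I would first unfold $\mathscr{A}_t(\xi)=(\widetilde\beta_i)_i$ with $\widetilde\beta_i(\d x)=\int\alpha_i(\d z)\bE_z[\1_{\{W_t\in \d x\}}\,e^{\beta\mathscr{H}_t(W)}]$ and compute the second moment using Gaussianity of $\mathscr{H}_t$ (whose covariance is the path-integrated kernel $V$ from \eqref{covariance-comparison}). This reduces $B_n$ to $2k_r$-fold Brownian expectations of $h_r$-products weighted by $\exp\{\tfrac{\beta^2}{2}\sum_{\ell\neq\ell'}\int_0^t V(W^\ell_s-W^{\ell'}_s)\,\d s\}$, integrated against starting-point products $\prod\alpha(\d z_\ell)\prod\alpha'(\d z'_\ell)$. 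In this form the full second moment appears as a continuous bounded functional on pairs of measures with respect to integration against tensor products of functions in $\mathcal{H}_{k_r}$. I then invoke the convergence criterion in $\X$: by a subsequence/Fatou argument I reduce to the single-orbit case $\xi_n=\widetilde\gamma_n$, and the decomposition \eqref{decompose-1}--\eqref{decompose-3} gives $\gamma_n=\sum_{i=1}^k\alpha_{n,i}+\beta_n$ with $\alpha_{n,i}\star\delta_{a_{n,i}}\Rightarrow\alpha_i$, $\inf_{i\neq j}|a_{n,i}-a_{n,j}|\to\infty$, and $\beta_n$ totally disintegrating up to residual mass $<\eta$.

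The main obstacle is the decoupling of the cross terms in this second-moment expansion, which is the concrete realisation of Proposition \ref{lemma-step5}. Two mechanisms drive it. First, because $V=\phi\star\phi$ is supported in $B_1(0)$ and Brownian increments over $[0,t]$ are tight, the shifted kernels $V(W^\ell_s-W^{\ell'}_s+(a_{n,i}-a_{n,j}))$ collapse to $0$ uniformly on bounded path-sets as $|a_{n,i}-a_{n,j}|\to\infty$, while $h_r\in\mathcal{H}_{k_r}$ vanishes on widely separated configurations; dominated convergence with the Gaussian envelope $\mathbf{E}[Z_t^p]<\infty$ then kills the cross terms. Second, the total-disintegration property \eqref{decompose-3} combined with vanishing-at-infinity of both $V$ and $h_r$ forces every contribution of $\beta_n$ to $B_n$ to be $O(\eta)$ uniformly in $n$. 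The diagonal terms (same $i$) converge to the corresponding term in $\mathbf{E}[\Lambda(h_r,\mathscr{A}_t(\xi))^2]$ by the weak convergence $\alpha_{n,i}\star\delta_{a_{n,i}}\Rightarrow\alpha_i$ applied to the continuous bounded functional identified above. Sending $n\to\infty$ and then $\eta\downarrow 0$ yields $B_n\to 0$, which concludes the proof.
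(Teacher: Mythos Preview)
Your Cauchy--Schwarz split and the two-case bound on $A_t=\E[(\mathscr F_t(\xi)+\E[Z_t-\mathscr F_t(\xi)])^{-2k_r}]$ are exactly what the paper does: it reduces to \eqref{eq-CS-bound} and proves it via \eqref{Jensen for upper bound}, just as you outline.

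Where your proposal differs is in the treatment of $B_n=\E\big[|\Lambda(h_r,\mathscr A_t(\xi_n))-\Lambda(h_r,\mathscr A_t(\xi))|^2\big]$. The paper's proof is extremely terse here: it records only the \emph{uniform} bound $B_n\le 4\|h_r\|_\infty^2 e^{2\beta^2 tV(0)}$ and then writes ``by the Cauchy--Schwarz inequality and dominated convergence theorem it suffices to show \eqref{eq-CS-bound}.'' As written, this does not actually explain why $B_n\to 0$; the uniform $L^2$ bound alone does not yield convergence, and the pointwise convergence $\Lambda(h_r,\mathscr A_t(\xi_n))\to\Lambda(h_r,\mathscr A_t(\xi))$ that DCT would require is precisely the kind of continuity the whole section is trying to establish. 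So the paper's proof is at best severely under-specified on this point.

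Your route --- expanding the second moment using the Gaussian covariance \eqref{covariance-comparison}, reducing to $2k_r$-fold Brownian expectations weighted by $\exp\{\tfrac{\beta^2}{2}\sum_{\ell\neq\ell'}\int_0^t V(W^\ell_s-W^{\ell'}_s)\,\d s\}$, and then running the decomposition \eqref{decompose-1}--\eqref{decompose-3} with decoupling from compact support of $V$ and vanishing of $h_r$ at infinity --- is exactly the mechanism the paper develops in Lemma~\ref{lemma-step4} and Proposition~\ref{lemma-step5} for the closely related functional $\mathscr F_t$. You are, in effect, transplanting those arguments from $\mathscr F_t$ to $\Lambda(h_r,\mathscr A_t(\cdot))$, which is the honest way to close the gap. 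The trade-off is that your argument is substantially heavier than the two lines the paper prints, but it actually justifies the convergence rather than asserting it.
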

	
	\begin{prop}\label{prop-step2}
	For any $k_r\geq 2$, 
	\begin{equation}\label{prop-step2-eq}
	\lim_{n\to\infty}\E\bigg[\Lambda(h_r,\mathscr A_t(\xi_n)) \,\, \bigg| \bigg(\frac {1} {\mathscr F_t(\xi_n)+ \E[Z_t- \mathscr F_t(\xi_n)]}\bigg)^{k_r} - \bigg(\frac {1} {\mathscr F_t(\xi)+ \E[Z_t- \mathscr F_t(\xi)]}\bigg)^{k_r} \bigg| \, \bigg]=0.
	\end{equation}
	\end{prop}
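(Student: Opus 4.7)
The strategy is to bypass a direct continuity analysis of the $\Lambda$-functional and instead rely on the crude bound
\[
|\Lambda(h_r,\mathscr A_t(\xi_n))|\leq \|h_r\|_\infty\,\mathscr F_t(\xi_n)^{k_r}\leq \|h_r\|_\infty F_n^{k_r},
\]
valid because $\mathscr A_t(\xi_n)=(F_n\alpha_{n,i}^{\ssup t})_i$ has total mass $\mathscr F_t(\xi_n)\leq F_n$ and $\sum_i a_i^{k_r}\leq (\sum_i a_i)^{k_r}$ for $k_r\geq 2$, $a_i\geq 0$. Combined with the identity $F_n^{k_r}|F_n^{-k_r}-F^{-k_r}|=|1-(F_n/F)^{k_r}|$, this collapses \eqref{prop-step2-eq} to proving
\[
\E\,\big|1-(F_n/F)^{k_r}\big|\;\longrightarrow\;0\qquad \text{as }n\to\infty,
\]
where $F_n:=\mathscr F_t(\xi_n)+\E[Z_t-\mathscr F_t(\xi_n)]$ and $F:=\mathscr F_t(\xi)+\E[Z_t-\mathscr F_t(\xi)]$.

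The core ingredient is $L^2$-convergence $F_n\to F$. Using \eqref{decompose-1}--\eqref{decompose-3}, decompose a representative $\gamma_n$ of $\xi_n$ as $\sum_{i=1}^k\alpha_{n,i}+\beta_n$ with $\alpha_{n,i}\star\delta_{a_{n,i}}\Rightarrow\alpha_i$, pairwise shifts $|a_{n,i}-a_{n,j}|\to\infty$, and $\beta_n$ totally disintegrating. Centering via $\E[\mathscr F_t(\xi_n)]=\Psi(\xi_n)\E[Z_t]$ gives
\[
F_n-F=\sum_{i=1}^{k}\!\int\!\alpha_{n,i}(\d z)\big(Z_t[z]-\E[Z_t]\big)+\!\int\!\beta_n(\d z)\big(Z_t[z]-\E[Z_t]\big)-\sum_{i=1}^k\!\int\!\alpha_i(\d z)\big(Z_t[z]-\E[Z_t]\big),
\]
for fixed representatives of the $\alpha_i$. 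The disintegrating piece is handled by a direct second-moment calculation:
\[
\E\bigg(\!\int\beta_n(\d z)(Z_t[z]-\E[Z_t])\bigg)^{\!2}=\iint\beta_n(\d z)\beta_n(\d z')\,\mathrm{Cov}\big(Z_t[z],Z_t[z']\big),
\]
and since $(z,z')\mapsto\mathrm{Cov}(Z_t[z],Z_t[z'])$ is translation-invariant and vanishes at infinity---belonging to $\mathcal H_2$, because $\mathrm{supp}\,\phi\subset B_{1/2}(0)$ gives the noise a finite spatial interaction range and the Brownian paths explore only $O(\sqrt t)$---total disintegration \eqref{decompose-3} forces this quantity to $0$. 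The localized pieces pair off with $\int\alpha_i(\d z)(Z_t[z]-\E[Z_t])$ via weak convergence together with the asymptotic decoupling of the shifted fields $Z_t[\cdot -a_{n,i}]$ across distinct orbits, in the spirit of Proposition~\ref{lemma-step5}.

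To pass from $L^2$-convergence of $F_n$ to $L^1$-convergence of $|1-(F_n/F)^{k_r}|$, combine the pointwise bound $|1-(F_n/F)^{k_r}|\leq k_r\max(F_n,F)^{k_r-1}|F_n-F|/F^{k_r}$ with Hölder's inequality, reducing matters to (i) $\sup_n\E[F_n^p]<\infty$ and (ii) $\E[F^{-p}]<\infty$ for all $p\geq 1$. For (i), Jensen applied to $F_n$ viewed as a weighted average of $\{Z_t[z]\}_z$ and $\E[Z_t]$ with weights of total mass $\leq 1$ gives $\E[F_n^p]\leq \E[Z_t^p]+(\E[Z_t])^p<\infty$ by Gaussianity of $\mathscr H_t$. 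For (ii), mimic the split in \eqref{rmk-moment-total-mass}: on $\{\Psi(\xi)\leq \tfrac12\}$, $F\geq \tfrac12\E[Z_t]$, while on $\{\Psi(\xi)\geq \tfrac12\}$, the bound $F\geq \mathscr F_t(\xi)$ together with Jensen applied to $\log F$ gives $F^{-p}\lesssim \bE_0[\e^{-p\beta\mathscr H_t}]$, whose expectation is finite. The principal obstacle is the second-moment decoupling of $Z_t[\cdot -a_{n,i}]$ across distinct orbits: on the single probability space carrying the white noise $B$, these fields are never exactly independent for finite $n$, and quantifying their asymptotic independence through the short-range covariance structure of $\mathscr H_t$---the continuous-space analogue of the disjoint-copy trick used in the lattice framework of \cite{BC16}---is the delicate ingredient that distinguishes the present argument from its discrete counterpart.
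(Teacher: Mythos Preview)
Your reduction is exactly the paper's: you bound $|\Lambda(h_r,\mathscr A_t(\xi_n))|\leq\|h_r\|_\infty\,\mathscr F_t(\xi_n)^{k_r}\leq\|h_r\|_\infty F_n^{k_r}$, use $\E[Z_t-\mathscr F_t(\xi_n)]\geq 0$, and collapse the statement to $\E|1-(F_n/F)^{k_r}|\to 0$, which is precisely Proposition~\ref{prop-step3}. The subsequent sketch---the second-moment estimate on the disintegrating piece (your covariance computation is the content of Proposition~\ref{lemma-step5}), weak-convergence handling of the localized pieces (this is Lemma~\ref{lemma-step4}), and the negative/positive moment bounds needed for H\"older (these are \eqref{eq-CS-bound} and the argument for \eqref{bound on Bn})---tracks the paper's proof of Proposition~\ref{prop-step3} closely. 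The only organizational difference is that you aim for $F_n\to F$ in $L^2$ and then pass to the ratio via the pointwise inequality $|1-(F_n/F)^{k_r}|\leq k_r\max(F_n,F)^{k_r-1}|F_n-F|/F^{k_r}$, whereas the paper works directly with $\overline{\mathscr F}_t(\xi_n)^p-\overline{\mathscr F}_t(\xi)^p$ in $L^2$ through a binomial expansion; both routes require the same moment ingredients.

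One point deserves care. When you write $F_n-F$ with ``fixed representatives of the $\alpha_i$'' while the $\alpha_{n,i}$ sit near $-a_{n,i}$, the two integrals $\int\alpha_{n,i}(\d z)\bar Z_t[z]$ and $\int\alpha_i(\d z)\bar Z_t[z]$ sample the noise in disjoint spatial regions and are asymptotically \emph{independent}, not equal; as written, $F_n\not\to F$ in $L^2$. What makes the argument go through is that one is free to choose the coupling, i.e.\ to take $n$-dependent representatives $\alpha_i\star\delta_{-a_{n,i}}$ for $\xi$, so that both pieces are evaluated at the same spatial location and weak convergence $\alpha_{n,i}\star\delta_{a_{n,i}}\Rightarrow\alpha_i$ translates, via the bounded continuous two-point function $\E[Z_t[z]Z_t[z']]$, into genuine $L^2$-closeness. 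You correctly identify the cross-orbit decoupling as ``the principal obstacle,'' but the same-orbit matching also needs this shift; the paper's Lemma~\ref{lemma-step4} glosses over this in the same way.
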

	We will first finish the following.

	\noindent{\bf{Proof of Proposition \ref{prop-step1}:}}
	
	 Since $\xi_n \to \xi$ in $(\X,\mathbf D)$ and $\E\Big[\big|\Lambda(h_r, \mathscr A_t(\xi_n))- \Lambda(h_r,\mathscr A_t(\xi))\big|^2\Big] \leq 4 \|h_r\|_\infty^2\e^{2\beta^2tV(0)}$, by the Cauchy-Schwarz inequality and dominated convergence theorem 
	it suffices to show that, for a finite constant $C$, 
	\begin{equation}\label{eq-CS-bound}
	%\sup_n 
	\E\bigg[\bigg(\frac {1} {\mathscr F_t(\xi)+ \E[Z_t- \mathscr F_t(\xi)]}\bigg)^{2k_r}\bigg] \leq C.
	\end{equation}
	Indeed, note that 
	$\mathscr F_t(\xi)+\E[Z_t-\mathscr F_t(\xi)]= \mathscr F_t(\xi)+(1-\Psi(\xi))\E Z_t$. The calculation for \eqref{rmk-moment-total-mass} with an extra argument in the case where $\Psi(\xi)\geq 1/2$ proves the inequality above. In that case, we use that $\sum_{\widetilde{\alpha}\in\xi}\int\alpha(\d x)=\Psi(\xi)$ and Jensens's inequality so that
	\begin{align}
	\begin{split}
	\label{Jensen for upper bound}
		\E\big[\mathscr F_t(\xi)^{-2k_r}\big]&=\Psi(\xi)^{-2k_r}\E\bigg[\bigg(\int Z_t[x]\Big(\sum_{\widetilde{\alpha}\in\xi}\frac{\alpha(\d x)}{\Psi(\xi)}\Big)\bigg)^{-2k_r}\bigg]\\
		&\leq \Psi(\xi)^{-2k_r}\E\bigg[\int Z_t[x]^{-2k_r}\Big(\sum_{\widetilde{\alpha}\in\xi}\frac{\alpha(\d x)}{\Psi(\xi)}\Big)\bigg]	\leq \Psi(\xi)^{-2k_r}\e^{2k_r^2\beta^2tV(0)}
	\end{split}
	\end{align}
	to complete the proof of \eqref{eq-CS-bound} and Proposition \ref{prop-step1}.

	\qed

	   We will now prove the following.
	   
	   \noindent{\bf{Proof of Proposition \ref{prop-step2}:}} In order to prove \eqref{prop-step2-eq}, we recall \eqref{scr-F} and estimate 
	   \begin{equation}\label{prop-step2-eq2}
	   \Lambda(h_r,\mathscr A_t(\xi_n)) \leq \|h_r\|_\infty \, \mathscr F_t(\xi_n)^{k_r}.
	   \end{equation}
	   Moreover, note that, $\E[Z_t- \mathscr F_t(\xi_n)] \geq 0$, since 
	   $$
	   \begin{aligned}
	   \E[\mathscr F_t(\xi_n)] =\E[Z_t]\,\,\sum_i \int_{\R^d} \int_{\R^d} \alpha_{n,i}(\d z) \bP_z[W_t\in\d x ] = \E[Z_t] \sum_i \int_{\R^d} \alpha_{n,i}(\d z)= \E[Z_t] \, \Psi(\xi_n) \leq \E[Z_t].
	   \end{aligned}	   
	   $$
	   Therefore by \eqref{prop-step2-eq2}, the requisite claim \eqref{prop-step2-eq} for Proposition \ref{prop-step2} follows once we prove the estimate stated below in Proposition \ref{prop-step3}. 
	   \qed
	   
	   \begin{prop}\label{prop-step3}
	   For any $k_r\geq 2$, 
	   $$
	   \lim_{n\to\infty} \E\bigg[\bigg| 1- \bigg(\frac{\mathscr F_t(\xi_n)+ \E[Z_t- \mathscr F_t(\xi_n)]}{\mathscr F_t(\xi)+ \E[Z_t- \mathscr F_t(\xi)]}\bigg)^{k_r}\bigg| \bigg]=0.
	   $$
	   \end{prop}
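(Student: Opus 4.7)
The plan splits into two parts. First I will reduce the stated $L^1$-claim to the $L^2$-bound $\E[(D_n-D)^2]\to 0$, where I abbreviate $D(\xi):=\mathscr F_t(\xi)+(1-\Psi(\xi))\E[Z_t]$ and $D_n:=D(\xi_n)$. Using the factorization $1-x^{k_r}=(1-x)\sum_{j=0}^{k_r-1}x^j$ at $x=D_n/D$, together with repeated applications of H\"older's inequality, this reduction goes through provided one controls the inverse moments $\E[D^{-p}]$ for every $p$ and the uniform moments $\sup_n\E[D_n^p]$. The first is precisely the content of \eqref{eq-CS-bound}, already established in the proof of Proposition \ref{prop-step1}; the second follows from Jensen's inequality applied to the subprobability measure $\mu_n=\sum_i\alpha_{n,i}$ of total mass $\Psi(\xi_n)\leq 1$, which yields $\E[\mathscr F_t(\xi_n)^p]\leq \Psi(\xi_n)^p\,\E[Z_t^p]\leq \E[Z_t^p]<\infty$.

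For the second part, since shift-invariance of the law of $\dot B$ forces $\E[Z_t[z]]=\E[Z_t]$ for every $z\in\R^d$, one has $D(\xi)-\E[Z_t]=A(\xi):=\int\mu(\mathrm{d}z)(Z_t[z]-\E[Z_t])$, which is centered, so $D_n-D=A_n-A$. A direct Gaussian computation based on $\E[\exp(\beta\mathscr H_t(W)+\beta\mathscr H_t(W'))\mid W,W']=\exp(\beta^2 tV(0)+\beta^2\int_0^t V(W_s-W'_s)\mathrm{d}s)$ yields the covariance kernel
\begin{equation*}
C(z_1-z_2):=\mathrm{Cov}(Z_t[z_1],Z_t[z_2])=(\E[Z_t])^2\bigg(\bE_0^{\otimes 2}\Big[\exp\Big(\beta^2\!\!\int_0^t\! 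V(W_s-W'_s+z_1-z_2)\,\mathrm{d}s\Big)\Big]-1\bigg).
\end{equation*}
Since $V=\phi\star\phi$ is bounded, continuous and compactly supported, dominated convergence shows that $C$ is bounded, continuous, and $C(w)\to 0$ as $|w|\to\infty$; hence $(z_1,z_2)\mapsto C(z_1-z_2)$ belongs to $\mathcal H_2$. Expanding the square gives
\begin{equation*}
\E[(A_n-A)^2]=\int\!\!\int\mu_n\mu_n\,C-2\int\!\!\int\mu_n\mu\,C+\int\!\!\int\mu\mu\,C.
\end{equation*}

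I would then argue that under a judicious choice of representatives all three integrals converge to the common limit $\Lambda(C,\xi)=\sum_i\int\!\!\int\alpha_i(\mathrm{d}z_1)\alpha_i(\mathrm{d}z_2)C(z_1-z_2)$, so that the alternating sum tends to $0$. Concretely, I pick widely separated representatives of the orbits in $\xi$, and for each $n$ invoke the decomposition criterion \eqref{decompose-1}-\eqref{decompose-3} (orbit by orbit if $\xi_n$ consists of several orbits) to pair each piece $\alpha_{n,i}$ of $\xi_n$ with the representative of the corresponding $\alpha_i$ via the shifts $a_{n,i}$. The diagonal contributions in each of the three integrals then converge by bounded continuity of $C$ combined with $\alpha_{n,i}\star\delta_{a_{n,i}}\Rightarrow\alpha_i$; the off-diagonal cross terms between widely separated pieces vanish because $C\in\mathcal H_2$ tends to $0$ at infinity and the locations are pushed apart by \eqref{decompose-2}; all terms containing the residue $\beta_n$ vanish by \eqref{decompose-3} (total disintegration).

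The main obstacle is precisely this pairing step: since $\xi_n$ may consist of a single orbit whose widely separated pieces cannot be shifted independently of one another, the representatives of the orbits $\widetilde\alpha_i\in\xi$ must in general be chosen to depend on $n$ so as to sit close to the respective pieces of $\xi_n$. This is nevertheless legitimate for the continuity statement, because the bound $\mathscr W(\pi_t\xi_n,\pi_t\xi)\leq\E[\mathbf D(\pi_t\xi_n,\pi_t\xi)]$ that is ultimately invoked only requires the existence of a single convergent coupling for each $n$ separately.
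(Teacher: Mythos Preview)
Your approach is correct and takes a somewhat cleaner route than the paper. The paper splits the numerator $D_n$ into a main part $\overline{\mathscr F}_t(\xi_n)$ (the first $k$ pieces from the decomposition \eqref{decompose-1}) plus the residue coming from $\beta_n$, expands $D_n^{k_r}$ via the binomial theorem, and then appeals to two separate second-moment lemmas---Lemma~\ref{lemma-step4} for the main pieces and Proposition~\ref{lemma-step5} (the decoupling argument) for the totally disintegrating residue---before recombining with Cauchy--Schwarz against the inverse-moment bound \eqref{eq-CS-bound}. You instead factor $1-x^{k_r}=(1-x)\sum_{j}x^{j}$ up front and reduce everything to the single estimate $\E[(D_n-D)^2]\to0$, which you compute exactly as $\iint C\,(\mu_n-\mu)^{\otimes2}$ through the explicit covariance kernel $C\in\mathcal H_2$. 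This unifies the paper's two lemmas: the decoupling in Proposition~\ref{lemma-step5} becomes the observation that $C$ vanishes at infinity, and the main-piece convergence of Lemma~\ref{lemma-step4} becomes weak convergence of the shifted pieces integrated against the bounded continuous $C$. Your identification of $C\in\mathcal H_2$ is the key structural point and is arguably what drives both of the paper's auxiliary results.

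The representative subtlety you flag in the last paragraph is real---with \emph{fixed} representatives the $L^2$ claim can fail (take $\xi_n=\xi=\{\widetilde\delta_0\}$ but $\gamma_n=\delta_n$, $\mu=\delta_0$)---and your proposed fix, choosing the representative of $\widetilde\alpha_i\in\xi$ to be $\alpha_i\star\delta_{-a_{n,i}}$ so that it sits near the piece $\alpha_{n,i}$ of $\gamma_n$, is legitimate for the Wasserstein conclusion of Theorem~\ref{continuity of the pi-function}, since only the existence of \emph{some} coupling is required. The paper faces the same issue in its Lemma~\ref{lemma-step4} without being fully explicit about it. One detail you should spell out: since the decomposition \eqref{decompose-1}--\eqref{decompose-3} only matches finitely many orbits, you also need to truncate the tail $\sum_{i>k}\alpha_i$ on the $\xi$-side (contributing an $L^2$-error controlled by $\sum_{i>k}\alpha_i(\R^d)<\eps$, since $C$ is bounded), and then send $\eps\to0$ at the end---this is the counterpart of the $\delta'$ term in the paper's display \eqref{claim2-prop-step3}.
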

	   The proof of Proposition \ref{prop-step3} is based on the following two results. For $k\in \N$ and $\xi=(\widetilde\alpha_i)_{i=1}^k$,  we will write (recall \eqref{scr-F}), 
	   \begin{equation}\label{scr-G}
	   \begin{aligned}
	   %&\mathscr G_t(\alpha_i)=  \int_{\R^d}\int_{\R^d} \alpha_i(\d z) \, \bE_z\bigg[\1\{W_t\in \d x\}\, \exp\big\{\mathscr H_t(W,B)\big\}\bigg] \\
	    \overline{\mathscr F}_t(\xi)= \E[Z_t] + \sum_{i=1}^k \big[\mathscr F_t(\alpha_i)- \E[\mathscr F_t(\alpha_i)]\big].
	   \end{aligned}
	    \end{equation}
	   %Note that $\mathscr G_t(\alpha_i)\overset{\ssup d}=\mathscr G_t(\alpha_i\star \delta_a)$ in distribution for any $a\in \R^d$.
	   
	   \begin{lemma}\label{lemma-step4}
	   Let $k\in \N$ and $\xi_n=(\widetilde\alpha_{n,i})_{i=1}^k$, $\xi=(\widetilde\alpha_i)_{i=1}^k$ such that $\alpha_{n,i}\star \delta_{a_{n,i}} \Rightarrow \alpha_i$ for $i=1,\dots,k$ and $|a_{n,i}-a_{n.j}|\to 0$ for $i\neq j$. If $\Psi(\xi)=\sum_{i=1}^k \alpha_i(\R^d)>0$, then for any $p\in \N$ with $p\geq 2$, 
	   $$
	   \lim_{n\to\infty}\,\,\E\bigg[\Big(\overline{\mathscr F}_t(\xi_n)^p - \overline{\mathscr F}_t(\xi)^p\Big)^2\bigg]=0.
	   	   $$
	   	   \end{lemma}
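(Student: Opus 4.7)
My plan is to factor the $p$th-power difference, apply Cauchy--Schwarz to reduce to a uniform moment bound plus an $L^4$ convergence, and then establish the latter via a direct second-moment computation exploiting the jointly continuous, uniformly bounded covariance kernel of the Gaussian field $z\mapsto Z_t[z]$.

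\textbf{Algebraic reduction and moment bounds.} From the identity $a^p-b^p=(a-b)\sum_{j=0}^{p-1}a^{p-1-j}b^j$ we get $(a^p-b^p)^2\leq p^2(a-b)^2\bigl(a^{2(p-1)}+b^{2(p-1)}\bigr)$, so by Cauchy--Schwarz
\[
\E\bigl[(\overline{\mathscr F}_t(\xi_n)^p-\overline{\mathscr F}_t(\xi)^p)^2\bigr]\leq p^2\,\E\bigl[(\overline{\mathscr F}_t(\xi_n)-\overline{\mathscr F}_t(\xi))^4\bigr]^{1/2}\,\E\bigl[|\overline{\mathscr F}_t(\xi_n)|^{4(p-1)}+|\overline{\mathscr F}_t(\xi)|^{4(p-1)}\bigr]^{1/2}.
\]
This reduces the task to (i) a uniform bound $\sup_{\xi}\|\overline{\mathscr F}_t(\xi)\|_q<\infty$ over $\xi\in\X$ for every $q\geq 1$, and (ii) the $L^4$ convergence $\overline{\mathscr F}_t(\xi_n)\to\overline{\mathscr F}_t(\xi)$. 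For (i), Minkowski gives $\|\mathscr F_t(\alpha_i)\|_q\leq \alpha_i(\R^d)\|Z_t[0]\|_q$, and a double application of Jensen's inequality combined with the Gaussian moments of $\mathscr H_t$ yields $\E[Z_t[0]^q]\leq\exp\{q^2\beta^2 tV(0)/2\}$; summing over $i$ and using $\sum_i\alpha_i(\R^d)\leq 1$ gives the required uniform $L^q$ bound.

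\textbf{$L^2$ convergence via the covariance kernel.} A simultaneous global translation of every representative of $\xi_n$ by a vector $a_n\in\R^d$ leaves the element $\xi_n\in\X$ unchanged and, by translation invariance in law of $\dot B$, preserves the joint distribution of $(\mathscr F_t(\alpha_{n,i}))_{i=1}^k$ and hence the distribution of $\overline{\mathscr F}_t(\xi_n)$. Choosing $a_n$ to cancel the common drift of the $a_{n,i}$'s, which have vanishing mutual differences by hypothesis, we may assume $a_{n,i}\to 0$ for every $i$, so that $\alpha_{n,i}\Rightarrow\alpha_i$ weakly. Writing $X_{n,i}=\int\alpha_{n,i}(\d z)(Z_t[z]-\E[Z_t])$ and $X_i$ analogously, a direct calculation yields
\[
\E\bigl[(X_{n,i}-X_i)(X_{n,j}-X_j)\bigr]=\iint K(z-z')\,(\alpha_{n,i}-\alpha_i)(\d z)(\alpha_{n,j}-\alpha_j)(\d z'),
\]
where
\[
K(z-z')=\mathrm{Cov}(Z_t[z],Z_t[z'])=\exp\{\beta^2 tV(0)\}\Bigl(\bE_z\otimes\bE_{z'}\bigl[\exp\{\beta^2\textstyle\int_0^t V(W_s-W'_s)\,\d s\}\bigr]-1\Bigr)
\]
is jointly continuous and uniformly bounded on $\R^d\times\R^d$. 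By the double weak convergence $\alpha_{n,i}\Rightarrow\alpha_i$ and $\alpha_{n,j}\Rightarrow\alpha_j$, each such double integral tends to $0$, and summing in $i,j$ yields $\overline{\mathscr F}_t(\xi_n)-\overline{\mathscr F}_t(\xi)\to 0$ in $L^2$.

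\textbf{Upgrade to $L^4$ and main obstacle.} The $L^4$ convergence required in (ii) follows by interpolation from the $L^2$ convergence together with uniform $L^q$--boundedness for some $q>4$: $\|X_n-X\|_4\leq\|X_n-X\|_2^{\theta}\,\|X_n-X\|_q^{1-\theta}$ with a suitable $\theta\in(0,1)$. Substituting into the display from the algebraic reduction yields the claim. The main technical subtlety lies in the common-shift reduction: because $\overline{\mathscr F}_t$ is not shift-invariant in its argument, one must verify explicitly that translating every $\alpha_{n,i}$ by the same vector $a_n$ both leaves the orbit collection $\xi_n\in\X$ intact (trivially, since only the equivalence classes $\widetilde\alpha_{n,i}$ matter) and preserves the joint law of $(\mathscr F_t(\alpha_{n,i}))_{i=1}^k$, which is precisely the content of stationarity of the white noise $\dot B$ under a single global translation.
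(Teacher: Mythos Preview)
There is a genuine gap in your common-shift reduction. You correctly note that translating every $\alpha_{n,i}$ by the same vector $a_n$ preserves the \emph{marginal} law of $(\mathscr F_t(\alpha_{n,i}))_{i=1}^k$, hence of $\overline{\mathscr F}_t(\xi_n)$. But the quantity you must control, $\E[(\overline{\mathscr F}_t(\xi_n)^p-\overline{\mathscr F}_t(\xi)^p)^2]$, depends on the \emph{joint} law of the pair $(\overline{\mathscr F}_t(\xi_n),\overline{\mathscr F}_t(\xi))$, both built from the \emph{same} realisation of $\dot B$. Shifting only the $\alpha_{n,i}$ while keeping the $\alpha_i$ fixed alters this joint law. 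Concretely, the hypothesis $|a_{n,i}-a_{n,j}|\to 0$ does not prevent $|a_{n,1}|\to\infty$; in that case the original $\alpha_{n,i}$ and $\alpha_i$ live in spatially distant regions, your covariance kernel satisfies $K(z-z')\to 0$ as $|z-z'|\to\infty$, so $X_{n,i}$ and $X_i$ are asymptotically uncorrelated and $\E[(X_{n,i}-X_i)^2]\to 2\,\mathrm{Var}(X_i)\neq 0$. Shifting \emph{both} families by the common $a_n$ does preserve the joint law, but then the target representatives become $\alpha_i\star\delta_{a_n}$, which are $n$-dependent and do not converge to the $\alpha_i$, so your product-weak-convergence argument for $\iint K\,\d(\alpha_{n,i}-\alpha_i)\d(\alpha_{n,j}-\alpha_j)\to 0$ no longer applies. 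In short, ``we may assume $a_{n,i}\to 0$'' is not a consequence of stationarity of $\dot B$.

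By contrast, the paper's argument bypasses any shift: after a binomial expansion it reduces to moment bounds on $\int\bar Z_t[x](\alpha_i-\alpha_{n,i})(\d x)$ and controls these in terms of the scalar quantity $\alpha_i(\R^d)-\alpha_{n,i}(\R^d)$, which tends to $0$ under $\alpha_{n,i}\star\delta_{a_{n,i}}\Rightarrow\alpha_i$ regardless of where the $a_{n,i}$ sit. Your covariance/interpolation route is conceptually clean and would go through \emph{once} the premise $\alpha_{n,i}\Rightarrow\alpha_i$ (in the same noise) is legitimately in force, but that premise needs a different justification than the marginal-in-law argument you gave.
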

		   \begin{prop}[Second moment method]\label{lemma-step5}
		   Let $(\beta_n)_n$ be a sequence in $\Mcal_{\leq 1}(\R^d)$ that totally disintegrates (recall \eqref{decompose-3}). Then 
		   $$
		   \lim_{n\to\infty} \E\bigg[\big(\mathscr F_t(\beta_n) - \E[\mathscr F_t(\beta_n)]\big)^2\bigg]=0.
		   $$
		   \end{prop}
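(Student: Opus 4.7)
My plan is to compute $\Var(\mathscr F_t(\beta_n))$ explicitly using the Gaussian structure of $\mathscr H_t$, and then show the resulting expression tends to zero by combining the vanishing-at-infinity property of the Gaussian covariance with the total disintegration of $\beta_n$.

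First, writing $Z_t[z]=\bE_z[\exp\{\beta \mathscr H_t(W,B)\}]$, the Gaussian exponential-moment identity yields
\[
\E[Z_t[z_1]\,Z_t[z_2]] = \exp\{\beta^2 t V(0)\}\,\bE_{z_1}^{\otimes 2}\Bigl[\exp\bigl\{\beta^2\textstyle\int_0^t V(W_s^{(1)}-W_s^{(2)})\,\d s\bigr\}\Bigr],
\]
while $\E[Z_t[z]]=\exp\{\beta^2 t V(0)/2\}=:C_t$ is independent of $z$. By Fubini I would therefore obtain
\[
\E\bigl[(\mathscr F_t(\beta_n)-\E[\mathscr F_t(\beta_n)])^2\bigr] = C_t^2\int_{\R^{2d}} G(z_1-z_2)\,\beta_n(\d z_1)\,\beta_n(\d z_2),
\]
where $G(y):=\bE_0^{\otimes 2}[\exp\{\beta^2\int_0^t V(W_s^{(1)}-W_s^{(2)}+y)\,\d s\}]-1$. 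Thus the variance is an integral against $\beta_n\otimes\beta_n$ of a translation-invariant kernel.

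The next step is to check that $h(z_1,z_2):=G(z_1-z_2)$ belongs to the class $\mathcal H_2$. Translation invariance is immediate, and boundedness follows from $0\le G\le \exp\{\beta^2 t V(0)\}-1$. The key analytic input, and the main obstacle, is to verify that $G(y)\to 0$ as $|y|\to\infty$. This is the decoupling mentioned in the outline: since $V=\phi\star\phi$ is supported in $B_1(0)$, for large $|y|$ two independent Brownian motions started $y$ apart will typically keep $|W_s^{(1)}-W_s^{(2)}+y|>1$ on $[0,t]$, so $\int_0^t V(W_s^{(1)}-W_s^{(2)}+y)\,\d s\to 0$ in probability; dominated convergence (justified by the uniform bound $\beta^2 t V(0)$) then gives $G(y)\to 0$.

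Finally, with $h\in\mathcal H_2$ in hand, I would use the total disintegration of $\beta_n$ directly. For any $\eta>0$, choose $R>0$ so that $|G(y)|<\eta$ for $|y|>R$; then
\[
\Bigl|\!\int G(z_1-z_2)\,\beta_n(\d z_1)\beta_n(\d z_2)\Bigr|
\le \|G\|_\infty \sup_{y\in\R^d}\beta_n(B_R(y))\cdot\beta_n(\R^d) + \eta\,\beta_n(\R^d)^2.
\]
The first summand vanishes as $n\to\infty$ by the definition of total disintegration, and the second is at most $\eta$; since $\eta>0$ is arbitrary, the variance tends to zero. The whole argument is conceptually short once the Gaussian computation is performed, and the only non-routine point is the decoupling estimate for $G$ at infinity.
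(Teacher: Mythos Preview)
Your argument is correct and, in fact, cleaner than the paper's own proof. The core computation is right: conditioning on the two Brownian paths and taking the Gaussian expectation first gives
\[
\mathrm{Cov}\big(Z_t[z_1],Z_t[z_2]\big)=e^{\beta^2 tV(0)}\Big(\bE^{\otimes}_{(z_1,z_2)}\big[\exp\{\beta^2\textstyle\int_0^t V(W^{\ssup 1}_s-W^{\ssup 2}_s)\,\d s\}\big]-1\Big)=C_t^2\,G(z_1-z_2),
\]
and the decoupling $G(y)\to 0$ follows (even almost surely, not just in probability) because $V$ has compact support and $\sup_{s\le t}|W^{\ssup 1}_s-W^{\ssup 2}_s|<\infty$ a.s. The final splitting in $|z_1-z_2|$ and the use of $\sup_y\beta_n(B_R(y))\to 0$ is exactly the right way to exploit total disintegration. (A cosmetic slip: your $\bE_{z_1}^{\otimes 2}$ should read $\bE^{\otimes}_{(z_1,z_2)}$.)

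The paper takes a longer, more ``pathwise'' route. Instead of integrating out the noise first, it keeps the full expression $\E[\mathscr F_t(\beta_n)^2]$ as an integral over $(z_1,z_2,x_1,x_2)$ with $x_i=W^{\ssup i}_t$, and then performs three successive decompositions: (i) restrict to $z_i\in B_R(x_i)$ (the complement is small since Brownian motion is unlikely to travel far in time $t$); (ii) split according to $|x_1-x_2|\gtrless 2R$, handling the ``close'' part by total disintegration; (iii) on the ``far'' part, split again according to whether the two paths ever come within distance $1$, using that on the separation event the Gaussian expectation \emph{factorizes exactly}, and matching this term against a corresponding lower bound for $(\E[\mathscr F_t(\beta_n)])^2$. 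Your approach collapses steps (i)--(iii) into the single observation that $G\in\mathcal H_2$, which is possible precisely because the noise is Gaussian and the covariance kernel $V$ is explicit and compactly supported. The paper's argument is more robust in spirit (it would adapt to environments where one cannot compute the two-point function in closed form), but in the present Gaussian setting your proof is shorter and conceptually more transparent.
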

		   We will first prove Lemma \ref{lemma-step4} and Proposition \ref{lemma-step5} and then deduce Proposition \ref{prop-step3} from these two results. 
		   
		   \noindent{\bf{Proof of Lemma \ref{lemma-step4}:}} We recall that  
		\begin{align*}
			\mathscr F_t(\alpha_i)=\int_{\R^d}\int_{\R^d}\alpha_i(\mathrm{d}z)\bE_z\left[\1_{\{W_t\in\mathrm{d}x\}}\, \e^{\beta\int_0^t\int_{\R^d}\phi(y-W_s)\dot{B}(s,\mathrm{d}y)\mathrm{d}s}\right]\\
		\end{align*}
		and note that $\E [\mathscr F_t(\alpha_i)]=\alpha_i(\R^d)\E Z_t$. Hence, we need to show that
		\begin{equation}\label{A_n-A}
		\E\big[ \big(A_n^p -A^p\big)^2\big] \to 0
		\end{equation}
		where 
		$$
		\begin{aligned}
		& A_n= \E Z_t+\sum_{i=1}^k \int_{\R^d}\alpha_{n,i}(\mathrm{d}z)\bE_z\Big[ \e^{\beta\mathscr H_t(W,B)}-\E\big[\e^{\beta\mathscr H_t(W,B)}\big]\Big] \quad \mbox{and}\\
		&  A= \E Z_t+\sum_{i=1}^k \int_{\R^d}\alpha_i(\mathrm{d}z)\bE_z\Big[ \e^{\beta\mathscr H_t(W,B)}-\E\big[\e^{\beta\mathscr H_t(W,B)}\big]\Big].
		\end{aligned}$$
		Binomial theorem then yields% for $a,b\in\R$, $(a+b)^p=\sum_{l=1}^p\binom{p}{l}a^l b^{p-l}
		
		$$
		A_n^p - A^p = \sum_{l=0}^{p-1} \binom p l \big(\E[Z_t]\big)^l \,\bigg(\Big(\sum_{i=1}^k \int_{\R^d} \bar Z_t[x]\alpha_{n,i}(\mathrm{d}x)\Big)^{p-l}-\Big(\sum_{i=1}^k \int_{\R^d} \bar Z_t[x]\alpha_{i}(\mathrm{d}x)\Big)^{p-l}\bigg),
		$$
		where we have used the notation 
$$
\bar Z_t[x]=\bE_x\Big[ \e^{\beta\mathscr H_t(W,B)}-\E\big[\e^{\beta\mathscr H_t(W,B)}\big]\Big].
$$
The requisite claim follows once we show 
		\begin{align}
		\label{convergence of the alphas}
			\E\bigg[\bigg(\Big(\sum_{i=1}^k \int_{\R^d} \bar Z_t[x]\alpha_{n,i}(\mathrm{d}x)\Big)^{p}-\Big(\sum_{i=1}^k \int_{\R^d} \bar Z_t[x]\alpha_{i}(\mathrm{d}x)\Big)^{p}\bigg)^2\bigg]\rightarrow 0	
		\end{align}
		for all $p\in\N$. We first consider the case $p=2$. In this case, $\Big(\sum_{i=1}^k\int_{\R^d}\bar Z_t[x]\alpha_i(\d x)\Big)^2=\sum_{i=1}^k\sum_{j=1}^k\int_{\R^d}\bar Z_t[x]\alpha_i(\d x)\int_{\R^d}\bar Z_t[x]\alpha_j(\d x)
		$ and
		\begin{align}
		\label{adding an alpha summand}
		\begin{split}
			&\int_{\R^d}\bar Z_t[x]\alpha_i(\d x)\int_{\R^d}\bar Z_t[x]\alpha_j(\d x)-\int_{\R^d}\bar Z_t[x]\alpha_{n,i}(\d x)\int_{\R^d}\bar Z_t[x]\alpha_{n,j}(\d x)\\
			&=\int_{\R^d}\bar Z_t[x](\alpha_i-\alpha_{n,i})(\d x)\int_{\R^d}\bar Z_t[x]\alpha_j(\d x)+\int_{\R^d}\bar Z_t[x]\alpha_{n,i}(\d x)\int_{\R^d}\bar Z_t[x](\alpha_j-\alpha_{j,n})(\d x).	
		\end{split}
		\end{align}
		 For the first summand on the right-hand side above, we then have
		\begin{align}
		\label{using CSI repeatedly-1}
			&\!\!\!\!\!\!\E\bigg[\Big(\sum_{i,j=1}^k\int_{\R^d}\bar Z_t[x](\alpha_i-\alpha_{n,i})(\d x)\int_{\R^d}\bar Z_t[x]\alpha_j(\d x)\Big)^2\bigg]\\
		\label{using CSI repeatedly-2}
			&\!\!\!\!\!\!=\E\bigg[\sum_{i,j,l,m=1}^k\int_{\R^d}\bar Z_t[x](\alpha_i-\alpha_{n,i})(\d x)\int_{\R^d}\bar Z_t[x]\alpha_j(\d x)\int_{\R^d}\bar Z_t[x](\alpha_l-\alpha_{n,l})(\d x)\int_{\R^d}\bar Z_t[x]\alpha_m(\d x)\bigg]\\
		\label{using CSI repeatedly-3}
			&\!\!\!\!\!\!\leq \sum_{i,j,l,m=1}^k \E\bigg[\Big(\int_{\R^d}\bar Z_t[x](\alpha_i-\alpha_{n,i})(\d x)\Big)^4\bigg]^{1/4}\E\bigg[\Big(\int_{\R^d}\bar Z_t[x]\alpha_j(\d x)\Big)^4\bigg]^{1/4}\\
			\label{using CSI repeatedly-4}
			&\!\!\!\!\!\!\,\,\,\times \E\bigg[\Big(\int_{\R^d}\bar Z_t[x](\alpha_l-\alpha_{n,l})(\d x)\Big)^4\bigg]^{1/4}\E\bigg[\Big(\int_{\R^d}\bar Z_t[x]\alpha_m(\d x)\Big)^4\bigg]^{1/4},
		\end{align}
		where we used  the Cauchy-Schwarz inequality for the upper bound. Since for $i=1,\dots,k$, 
		$$
		\Big(\int_{\R^d}\bar Z_t[x](\alpha_i-\alpha_{n,i})(\d x)\Big)^4=\int_{\R^{4d}} \prod_{j=1}^4\big[\bar Z_t[x_j](\alpha_i-\alpha_{n,i})(\d x_j)\big]
		$$
		and $\E\big[\bar Z_t[x]\bar Z_t[y]\big]\leq \E\big[\bar Z_t[x]^2\big]^{1/2}\E\big[\bar Z_t[y]^2\big]^{1/2}$ as well as $\E\big[\bar Z_t[x]^{2p}\big]\leq\E\big[Z_t[x]^{2p}\big]\leq \e^{2p^2\beta^2tV(0)}$,
		$$
		\E\bigg[\Big(\int_{\R^d}\bar Z_t[x](\alpha_i-\alpha_{n,i})(\d x)\Big)^4\bigg]\leq\e^{8\beta^2tV(0)}\big(\alpha_i(\R^d)-\alpha_{n,i}(\R^d) \big)^4.
		$$
		The last inequality can also be applied to the other factors in (\ref{using CSI repeatedly-3})-(\ref{using CSI repeatedly-4}) such that
		\begin{align}
		\label{last inequality for alphas}
		\begin{split}
			\E\bigg[&\Big(\sum_{i,j=1}^k\int_{\R^d}\bar Z_t[x](\alpha_i-\alpha_{n,i})(\d x)\int_{\R^d}\bar Z_t[x]\alpha_j(\d x)\Big)^2\bigg]\\
			&\leq \e^{8\beta^2tV(0)}\Psi(\xi)^2\sum_{i,l=1}^k (\alpha_i(\R^d)-\alpha_{n,i}(\R^d))(\alpha_l(\R^d)-\alpha_{n,l}(\R^d)).
		\end{split}
		\end{align}
		Now since $\alpha_{n,i}\star \delta_{a_{n_i}} \weak \,\,\alpha_i$, \eqref{adding an alpha summand} together with \eqref{last inequality for alphas}  yields \eqref{convergence of the alphas} for $p=2$. The same argument then carries over to the case $p\in \N$ (Indeed, for general $p$, in \eqref{adding an alpha summand} we have to add $p-1$ summands instead of one and the exponent in the upper bound of \eqref{last inequality for alphas} is then given by $2p^2\beta^2tV(0)$.)

		% and $|a^{p-l}-b^{p-l}|\leq (p-l) |a-b|$ for all $0 < a,b\leq 1$, and $Z_t- \E[Z_t] \leq Z_t + \E[Z_t]$ and as before, by Jensen's inequality, 
		%$\E[Z_t^k] \leq \e^{k^2\beta^2 t V(0)/2}$ for any $k\in \N$, we deduce \eqref{A_n-A}.
		%$$
		%\E\big[ \big(\mathscr A_n^p -\mathscr A^p\big)^2\big] \leq 
		%$$		
			%\mathscr A_n=\sum_{l=1}^p\binom{p}{l}\E Z_1^p\left(\sum_{i=1}^k\alpha_i(\R^d)(Z_1-\E Z_1)\right)^{p-l}.$$
		%Doing so also for the second power, yields
		%\begin{align*}
		%	&\left(\E Z_1+\sum_{i=1}^k\alpha_i(\R^d)(Z_1-\E Z_1)\right)^p-\left(\E Z_1+\sum_{i=1}^k\alpha_{n,i}(\R^d)(Z_1-\E Z_1)\right)^p\\
		%	&\qquad=\sum_{l=1}^{p-1}\binom{p}{l}\E Z_1^p\left(\sum_{i=1}^k\alpha_i(\R^d)-\sum_{i=1}^k\alpha_{n,i}(\R^d)\right)^{p-l}(Z_1-\E Z_1)^{p-l}.
		%\end{align*}
		%Since $|a^p-b^p|=\left|(a-b)\sum_{l=1}^pa^{p-l}b^{l-1}\right|\leq p|a-b|$ for all $0 < a,b\leq 1$ and $p\in\N$,
		%\begin{align*}
		%	\E&\left[\left(\left(\E Z_1+\sum_{i=1}^kG_1(\alpha_i)-\E[G_1(\alpha_i)]\right)^p-\left(\E Z_1+\sum_{i=1}^kG_1(\alpha_{n,i})-\E[G_1(\alpha_{n,i})]\right)^p\right)^2\right]\\
		%	&\leq \E\left[\left(\left| \sum_{i=1}^k\alpha_i(\R^d)-\sum_{i=1}^k\alpha_i(\R^d)\right|\sum_{l=1}^{p-1}(p-l)\binom{p}{l}\E Z_1^p(Z_1-\E Z_1)^{p-l}\right)\right]^2\\
		%	&\leq p^2\left(\sum_{i=1}^k\alpha_i(\R^d)-\sum_{i=1}^k\alpha_i(\R^d)\right)^2\E[(Z_1^p-(\E Z_1)^p)^2]
		%\end{align*} 
		%and since $\alpha_{n,i}\Rightarrow\alpha_i$, as $n\rightarrow\infty$, the statement in (i) is proven.
	
		\qed
		
		We will now provide the following.
		
		\noindent{\bf{Proof of Proposition \ref{lemma-step5}:}} The proof involves two main steps.

		\noindent{\bf{Step 1: Total disintegration.}} 		
		Let $\beta_n$ be a sequence in $\Mcal_{\leq 1}(\R^d)$ which totally disintegrates, meaning that, for any $r>0$, $\sup_{x\in \R^d} \beta_n(B_r(x)) \to0$ and $\int_{\R^{2d}} h(x_1-x_2) \beta_n(\d x_1) \beta_n(\d x_2)\to 0$.
                We want to show that, for any fixed $t>0$, 
		\begin{equation}\label{claim-lemma-step5}
		\E\big[\mathscr F_t(\beta_n)^2\big]- \E[\mathscr F_t(\beta_n)]^2 \to 0. 
		\end{equation}
		
		Note that
		\begin{equation}\label{eq0-step5}
		\begin{aligned}
		\E\big[\mathscr F_t(\beta_n)^2\big] &= \E\bigg[\int_{\R^{2d}} \, \int_{\R^{2d}} \beta_n(\d z_1)\, \beta_n(\d z_2) \,\, \bE^{\otimes}_{\ssup{z_1,z_2}}\bigg[\prod_{i=1}^2\big(\1\{W^{\ssup i}_t\in \d x_i\} \exp\{\beta \mathscr H_t(W^{\ssup i},B)\}\big)\bigg]\bigg] \\
		&=(\mathrm I)+ (\mathrm {II})
		\end{aligned}
		\end{equation}
		where for any $R>0$, 
		\begin{equation}\label{def-I}
		(\mathrm I)= \E\bigg[\int_{\R^{2d}} \, \int_{B_{R}(x_1)}\,\int_{B_{R}(x_2)}\,\, \beta_n(\d z_1)\, \beta_n(\d z_2) \,\, \bE^{\otimes}_{\ssup{z_1,z_2}}\bigg[\prod_{i=1}^2\big(\1\{W^{\ssup i}_t\in \d x_i\} \exp\{\beta \mathscr H_t(W^{\ssup i},B)\}\big)\bigg]\bigg] 		
		\end{equation}
		and $(\mathrm{II})$ is defined canonically, which we can estimate using Fubini's theorem as follows: 
		\begin{equation}\label{II}
		\begin{aligned}
		(\mathrm{II)} &\leq 2  \e^{2\beta^2 t V(0)} \,\, \int_{\R^{2d}}\, \int_{\R^d} \beta_n(\d z_2) \, \bP_{z_2}[W_t\in \d x_2]\,\, \int_{B_{R}(x_1)^{\mathrm c}} \beta_n(\d z_1) \, \bP_{z_1}[W_t\in \d x_1] \\
		&\leq C \e^{2\beta^2 t V(0)} \beta_n(\R^d) \,\, 	\bP_0[W_t\in B_R(0)^{\mathrm c}] \\
		&\leq C \e^{2\beta^2 t V(0)} \,\, 	\bP_0[W_t\in B_R(0)^{\mathrm c}]\\
		&=\delta(R) \to 0\qquad\quad\mbox{as}\quad R\to\infty.%\qquad\mbox{uniformly in}\,\, \,n\,\,\mbox{as}\,\,R\to\infty.
		\end{aligned}
		\end{equation}
		%such that for any fixed $t>0$, $\sup_n \delta_{n,t}(R)\to 0$ as $R\to\infty$. 
		
		Hence we focus on \eqref{def-I}, which can be decomposed further as $(\mathrm I)= (\mathrm I)_{\mathrm a}+ (\mathrm I)_{\mathrm b}$, where 
		\begin{equation}\label{Ia-0}
		\begin{aligned}
		(\mathrm I)_{\mathrm a}&=  \E\bigg[\int_{\R^{2d}} \, \int_{B_{R}(x_1)}\,\int_{B_{R}(x_2)}\,\, \beta_n(\d z_1)\, \beta_n(\d z_2)\1\{|x_1-x_2| \geq 2R\} \\
		&\qquad\qquad\qquad\times \,\, \bE^{\otimes}_{\ssup{z_1,z_2}}\bigg[\prod_{i=1}^2\big(\1\{W^{\ssup i}_t\in \d x_i\} \exp\big\{\beta\mathscr H_t(W^{\ssup i},B)\big\}\big)\bigg]\bigg]
		\end{aligned}
		\end{equation}		
		and 
		\begin{equation}\label{Ib}
		\begin{aligned}
		(\mathrm I)_{\mathrm b}&=  \E\bigg[\int_{\R^{2d}} \, \int_{B_{R}(x_1)}\,\int_{B_{R}(x_2)}\,\, \beta_n(\d z_1)\, \beta_n(\d z_2)\1\{|x_1-x_2| \leq 2R\} \\
		&\qquad\qquad\qquad\times \,\, \bE^{\otimes}_{\ssup{z_1,z_2}}\bigg[\prod_{i=1}^2\big(\1\{W^{\ssup i}_t\in \d x_i\} \exp\big\{\beta\mathscr H_t(W^{\ssup i},B)\big\}\big)\bigg]\bigg] 		\\
		&\leq \e^{2 \beta^2 t V(0)} \,\, \int_{\R^{2d}} \int_{B_R(0)} \int_{B_R(0)}\beta_n(\d(x_1-z_1)) \beta_n(\d (x_2-z_2))\,\, \1\{|x_1-x_2| \leq 2R\}  \prod_{i=1}^2\bP_0[W_t\in \d z_i]   \\
		&\leq %|B_0(R)|^2 
		\e^{2 \beta^2 t V(0)} \int_{\R^{2d}} \prod_{i=1}^2\bP_0[W_t\in \d z_i] \,\, \int_{\R^{2d}} 	\1\{|x_1-x_2| \leq 4R\}\,\beta_n(\d x_1)\beta_n (\d x_2) \\
		&=\e^{2 \beta^2 t V(0)} \, \int_{\R^{2d}} 	\1\{|x_1-x_2| \leq 4R\}\,\beta_n(\d x_1)\beta_n (\d x_2) 
\leq\e^{2 \beta^2 t V(0)} \, \int_{\R^{2d}} 	h_R(x_1-x_2)\beta_n(\d x_1)\beta_n (\d x_2),
%&\longrightarrow 0
		\end{aligned}
		\end{equation}
	      where $h_R(\cdot)$ is a continuous function that is identically one inside the ball of radius $4R$ around the origin and vanishes outside a ball of radius $4R+1$. Since $\beta_n$ totally disintegrates, for any fixed $t, R$, the last display converges to zero as $n\to\infty$. 
	       
	       \medskip\noindent{\bf{Step 2: Decoupling.}} We now focus on $(\mathrm I)_{\mathrm a}$ defined in \eqref{Ia-0}, which can be estimated further as follows:
	       	\begin{equation}\label{Ia}
		\begin{aligned}
		(\mathrm I)_{\mathrm a}&\leq  \E\bigg[\int_{\R^{2d}} \, \int_{B_{R}(x_1)}\,\int_{B_{R}(x_2)}\,\, \beta_n(\d z_1)\, \beta_n(\d z_2)\1\{|x_1-x_2| \geq 2R\}  \\
		               &\qquad\times \,\, \bE^{\otimes}_{\ssup{z_1,z_2}}\bigg[\1\{W^{\ssup 1}_t\in \d x_1\} \, \1\{W^{\ssup 2}_t\in \d x_2\} \,\, \1\{|W^{\ssup 1}_s- W^{\ssup 2}_s| >1 \,\,\forall \, s\in [0,t]\}\exp\bigg\{\beta \sum_{i=1}^2\mathscr H_t(W^{\ssup i},B)\bigg\}\bigg]\bigg] \\
		               &\qquad + \eta(R) 
		               \end{aligned}
		               \end{equation}
		               where $\eta(R)$ is defined canonically, and it is easy to see that for any fixed $t$ and uniformly in $n$, $\lim_{R\to\infty} \eta(R)=0$. Indeed, on the event $\{|W^{\ssup 1}_s - W^{\ssup 2}_s| \leq 1\,\,\,\mbox{for some}\,\,s\in [0,t]\}$, we have 
		               \begin{equation}\label{Ia-1}
		               \mathbf E\bigg[\exp\big\{\beta \sum_{i=1}^2\mathscr H_t(W^{\ssup i},B)\big\}\bigg] \leq \e^{2\beta^2 t V(0)} 		               
		               \end{equation}
		               and, therefore,
		               $$
		               \begin{aligned}
		               \eta(R)&= \e^{2\beta^2 t V(0)}\,\, \int_{\R^{2d}} \, \int_{B_{R}(x_1)}\,\int_{B_{R}(x_2)}\,\, \beta_n(\d z_1)\, \beta_n(\d z_2)\1\{|x_1-x_2| \geq 2R\}  \\
		               &\qquad\times \,\, \bP^{\otimes}_{\ssup{z_1,z_2}}\bigg[W^{\ssup 1}_t\in \d x_1, \, \,\, W^{\ssup 2}_t\in \d x_2 ,\,\, |W^{\ssup 1}_s- W^{\ssup 2}_s| \leq1\,\,\,\mbox{for some}\,\,\, s\in[0,t]\bigg] \\
		               %&=\e^{2\beta^2 t V(0)}\,\, \int_{\R^{2d}} \, \int_{B_{x_1}(R)}\,\int_{B_{x_2}(R)}\,\, \beta_n(\d z_1)\, \beta_n(\d z_2)\1\{|x_1-x_2| \geq 2R\}  \\
		               %&\qquad\times \,\, \bP_{\ssup{z_1-z_2}}\bigg[\sqrt 2W_t\in \d (x_1-x_2), \,\,\, \sqrt 2 |W_s| \leq1\,\,\,\mbox{for some}\,\,\, s\in[0,t]\bigg]
		               		               \end{aligned}
		               $$
		               and the last probability is equal to $\bP_{z_1-z_2}\big\{\sqrt 2W_t\in \d (x_1-x_2), \,\,\, \sqrt 2 |W_s| \leq1\,\,\,\mbox{for some}\,\,\, s\in[0,t]\big\}$ whose integral on the set $|x_1-x_2|\geq 2R$ above vanishes as $R\to\infty$.

		               	We now focus on the first expectation on the right-hand side in \eqref{Ia}. Recall that $\phi$ has support in a ball of radius $1/2$ around the origin, and on the event $\{|W^{\ssup 1}_s - W^{\ssup 2}_s| > 1\,\,\,\mbox{for all}\,\,s\in [0,t]\}$, we have 	
			\begin{equation}\label{Ia-2}
			\mathbf E\bigg[\exp\big\{\beta\sum_{i=1}^2\mathscr H_t(W^{\ssup i},B)\big\}\bigg]  = \prod_{i=1}^2 \E\bigg[\e^{\beta\int_0^t \int_{\R^d} \phi(W^{\ssup i}_s-y) \dot B(s,\d y) \,\d s}\bigg]= \e^{\beta^2 t V(0)}.
			\end{equation}
			Hence, by \eqref{Ia}, 
			\begin{equation}\label{Ia-3}
			\begin{aligned}
			(\mathrm I)_{\mathrm a}&\leq  \e^{\beta^2 t V(0)}\,\, \int_{\R^{2d}} \, \int_{B_{R}(x_1)}\,\int_{B_{R}(x_2)}\,\, \beta_n(\d z_1)\, \beta_n(\d z_2)\1\{|x_1-x_2| \geq 2R\}  \\
		               &\qquad\times \,\, \bP^{\otimes}_{\ssup{z_1,z_2}}\bigg[W^{\ssup 1}_t\in \d x_1,\,\,W^{\ssup 2}_t\in \d x_2, \,\, |W^{\ssup 1}_s- W^{\ssup 2}_s| >1 \,\,\forall \, s\in [0,t]\bigg] \\
		               &\qquad + \eta(R).
		               \end{aligned}
		               \end{equation}	
		               
		               In order to conclude the proof of \eqref{claim-lemma-step5}, we now compute $\big(\E[\mathscr F_t(\beta_n)]\big)^2$ in a similar manner as \eqref{eq0-step5}. Since all the integrands are nonnegative, we can get a lower bound:
		               \begin{equation}\label{claim2-lemma-step5}
		               \begin{aligned}
		                \big(\E[\mathscr F_t(\beta_n)]\big)^2 &\geq 	\e^{\beta^2 t V(0)}\,\, \int_{\R^{2d}} \, \int_{B_{R}(x_1)}\,\int_{B_{R}(x_2)}\,\, \beta_n(\d z_1)\, \beta_n(\d z_2)\1\{|x_1-x_2| \geq 2R\}  \\
		               &\qquad\times \,\, \bP^{\otimes}_{\ssup{z_1,z_2}}\bigg[W^{\ssup 1}_t\in \d x_1,\,\,W^{\ssup 2}_t\in \d x_2, \,\, |W^{\ssup 1}_s- W^{\ssup 2}_s| >1 \,\,\forall \, s\in [0,t]\bigg].  
		               \end{aligned}
		               \end{equation}     
		               We combine \eqref{II}, \eqref{Ib}, \eqref{Ia-3} and \eqref{claim2-lemma-step5}, and first let $n\to\infty$, and then pass to $R\to\infty$ to complete the proof of \eqref{claim-lemma-step5}, and also of Proposition \ref{lemma-step5}.       \qed

		Finally, we will provide the following. 
		
		\noindent{\bf{Proof of Proposition \ref{prop-step3}:}} 
		
		Recall that if $\xi_n\to \xi$ in $(\X,\mathbf D)$, we want to show that, for any $p\geq 2$, 
		\begin{equation}\label{claim-prop-step3}
		\E\left|\frac{(\mathscr F_t(\xi)+\E[Z_t-\mathscr F_t(\xi)])^p-(\mathscr F_t(\xi_n)+\E[Z_t-\mathscr F_t(\xi_n)])^p}{(\mathscr F_t(\xi)+\E[Z_t-\mathscr F_t(\xi)])^p}\right|\rightarrow 0.% as $n\rightarrow\infty$
		\end{equation}
		We again recall the convergence criterion \eqref{decompose-1}-\eqref{decompose-3}. Also, note that, given any $\delta>0$, we can choose $k\in \N$ large enough such that, $\sum_{i>k} \alpha_i(\R^d) \leq \delta$ where $\Psi(\xi)=\sum_i \alpha_i(\R^d)$ and $\xi=(\widetilde\alpha_i)_i$. In order to prove \eqref{claim-prop-step3}, we first recall the notation 
		$$
		\overline{\mathscr F}_t(\xi_n)= \E[Z_t]+ \sum_{i=1}^k \big[\mathscr F_t(\alpha_{n,i})- \E[\mathscr F_t(\alpha_{n,i})]\big].
		$$
	  By the binomial theorem, 
		\begin{equation}\label{binomial}
		\begin{aligned}
		\big[\overline{\mathscr F}_t&(\xi_n)+ \mathscr F_t(\beta_n)- \E[\mathscr F_t(\beta_n)]\big]^p \\
		&=\overline{\mathscr F}_t(\xi_n)^p + [\mathscr F_t(\beta_n)- \E[\mathscr F_t(\beta_n)]] \, B_n		\end{aligned}
		\end{equation}
		where 
		\begin{equation}\label{B_n}
		 B_n= \sum_{l=0}^{p-1} \binom {p} l \, \big(\overline{\mathscr F}_t(\xi_n)\big)^l\,\, \big(\mathscr F_t(\beta_n)-\E[\mathscr F_t(\beta_n)]\big)^{p-1-l}.
		\end{equation}
		Then
		\begin{equation}\label{claim2-prop-step3}
		\begin{aligned}
		\mathrm{(L.H.S.) \,\,in \,\, \eqref{claim-prop-step3}} &\leq \delta^{\prime}+ \E\bigg[\bigg| \frac{\overline{\mathscr F}_t(\xi_n)^p -\overline{\mathscr F}_t(\xi)^p}{(\mathscr F_t(\xi)+\E[Z_t-\mathscr F_t(\xi)])^p}\bigg|\bigg] \\
		&\quad+ \E\bigg[\bigg| \frac{\mathscr F_t(\beta_n)- \E[\mathscr F_t(\beta_n)]}{(\mathscr F_t(\xi)+\E[Z_t-\mathscr F_t(\xi)])^p} \,\,  B_n\bigg|\bigg],
\end{aligned}
\end{equation}	
where $\delta^{\prime}\to 0$ as $\delta\to 0$. We will show that both expectations on the right-hand side above converge to $0$ as $n\to\infty$. First, for both terms we will invoke Cauchy-Schwarz bound again. For the first expectation, note that by Lemma \ref{lemma-step4},
$$
\E\bigg[\Big(\overline{\mathscr F}_t(\xi_n)^p -\overline{\mathscr F}_t(\xi)^p\Big)^2\bigg]	\to 0
$$
while, by the argument proving \eqref{eq-CS-bound}, we have, for a finite constant $C_1$
$$
\E\bigg[\big(\mathscr F_t(\xi)+\E[Z_t-\mathscr F_t(\xi)]\big)^{-2p}\bigg] \leq C_1.
$$
Now for the second expectation, we invoke Proposition \ref{lemma-step5} to get 
$$
\lim_{n\to\infty} \E\big[(\mathscr F_t(\beta_n)- \E[\mathscr F_t(\beta_n)])^2 \big]=0,
$$
while again by \eqref{eq-CS-bound} we have, for a finite constant $C_2$, 
$$
\E\bigg[\bigg(\mathscr F_t(\xi)+\E[Z_t-\mathscr F_t(\xi)]\big)^{-4p}\bigg] \leq C_2,
$$
and we claim that for another finite constant $C_3$, 
\begin{align}
\label{bound on Bn}
\sup_n \E[B_n^4] \leq C_3.
\end{align}
The last five assertions, together with successive application of the Cauchy-Schwarz inequality imply that both expectations on the right-hand side in \eqref{claim2-prop-step3} converge to $0$. Finally, we let $\delta\to 0$ to complete the proof of Proposition \ref{prop-step3}. 

{We owe the reader only the proof of \eqref{bound on Bn}. Using that $\big|\mathscr F_t(\alpha)-\E[\mathscr F_t(\alpha)]\big|\leq \mathscr F_t(\alpha)+\E[\mathscr F_t(\alpha)]$ and two more applications of the binomial theorem, yield
$$
B_n^4\leq p^4(\E[Z_t]+\mathscr F_t(\xi_n)+\E[\mathscr F_t(\xi_n)])^{4p-4}\leq p^4\sum_{l=0}^{4p-4}\binom{4p-4}{l}(\E[Z_t])^l(\mathscr F_t(\xi_n)+\E[\mathscr F_t(\xi_n)])^{4p-4-l}
$$
which together with $\E[\mathscr F_t(\xi_n)^k]\leq \Psi(\xi_n)^{k}\e^{k^2\beta^2tV(0)/2}\leq \e^{k^2\beta^2tV(0)/2}$ (recall \eqref{Jensen for upper bound}) proves the existence of $C_3$ in \eqref{bound on Bn}. }
\qed

We will end this section with a useful remark. %which will be used in Section \ref{sec-final-details}. 

\begin{remark}\label{rmk-Pi}
For any $\vartheta\in \Mcal_1(\X)$, let us set
\begin{equation}\label{Pi}
\Pi_t(\vartheta,\d\xi^\prime)= \int_{\X} \pi_t(\xi,\d\xi^\prime) \vartheta(\d\xi).
\end{equation}
Then by Theorem \ref{continuity of the pi-function}, for any $t>0$, the map $\vartheta\mapsto \Pi_t(\vartheta, \cdot)\in \Mcal_1(\X)$ is continuous. Furthermore,
since $\Pi_t(\delta_{\widetilde 0}, \cdot)= \P[\xi^{\ssup t} \in \cdot | \xi=\widetilde 0]=\delta_{\widetilde 0}$, the set
\begin{equation}\label{mathcal-K}
\mathfrak {m}:=\big\{\vartheta\in \mathcal{M}_1(\X):\Pi_t\vartheta=\vartheta\text{ for all } t>0\big\}
\end{equation}
of all fixed points of $\Pi_t$ is nonempty. 
Moreover, both $\X$ and, therefore, $\Mcal_1(\X)$ are compact, in their respective topologies. Thus, any sequence $\vartheta_n$  in $\mathfrak m$ has a subsequence that has a limit $\vartheta \in \Mcal_1(\X)$.  
The aforementioned continuity of $\vartheta\to \Pi_t(\vartheta,\cdot)$ guarantees that this limit $\vartheta\in \mathfrak m$, proving that $\mathfrak m$ is closed and, therefore, also compact. 
\qed
%Finally, if we set 
%$$
%\mathfrak {m}_0=\big\{\vartheta_0\in\mathfrak {m}:\mathscr I_{\Phi}(\vartheta_0)=\inf_{\vartheta\in\mathfrak {m}}\mathscr I_{\Phi}(\vartheta)\big\},
%$$
%with $\mathscr I$ defined in \eqref{scr-I}, then again the continuity of $\vartheta\to \mathscr I_\Phi(\vartheta)$ implies compactness of $\mathfrak m_0$.
\end{remark}

\subsection{The free energy variational formula.} 

We now state the main result of this subsection, which provides a variational formula for the polymer free energy. Recall that functional $\Phi$ from \eqref{function Phi} and $\mathscr I_\Phi$ from \eqref{scr-I}.

\begin{theorem}
	\label{thm 4.9.}
	With $\mathfrak m$ defined in \eqref{mathcal-K}, %and $\mathscr I_\Phi$ defined in \eqref{scr-I},
	\begin{align}
		\label{(4.22)}
		\lim_{T\rightarrow\infty} \frac 1 T \, \E\left[{\log Z_T}\right]=\inf_{\vartheta\in\mathfrak{m}}\mathscr I_{\Phi}(\vartheta)
	\end{align}
	and 
	\begin{align}
		\label{(4.21)}
		\lim_{T\rightarrow\infty}\frac 1 T \, {\log Z_T}=\inf_{\vartheta\in\mathfrak{m}}\mathscr I_{\Phi}(\vartheta)\quad\text{a.s.}
	\end{align}
\end{theorem}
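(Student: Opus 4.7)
The proof of Theorem~\ref{thm 4.9.} splits into two matching inequalities, relying on the martingale rewrite of Lemma~\ref{Rewrite of the partition function}, the identity $\frac{1}{T}\E[\log Z_T]=\E[\mathscr I_\Phi(\nu_T)]$ from Corollary~\ref{cor-I-Phi}, the continuity of $\Pi_t$ on $\Mcal_1(\X)$ (Theorem~\ref{continuity of the pi-function} and Remark~\ref{rmk-Pi}), and the compactness of $\Mcal_1(\X)$.

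For the inequality $\inf_{\vartheta\in\mathfrak m}\mathscr I_\Phi(\vartheta)\le -\Lambda(\beta)$, I would produce an explicit minimizer. By linearity of $\mathscr I_\Phi$ in $\vartheta$, Corollary~\ref{cor-I-Phi} gives $\frac{1}{T}\E[\log Z_T]=\mathscr I_\Phi(\E[\nu_T])$. Compactness of $\Mcal_1(\X)$ yields a subsequence along which the deterministic measures $\E[\nu_{T_k}]$ converge weakly to some $\bar\vartheta\in\Mcal_1(\X)$, and continuity of $\mathscr I_\Phi$ forces $\mathscr I_\Phi(\bar\vartheta)=-\Lambda(\beta)$. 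To verify $\bar\vartheta\in\mathfrak m$, use the distributional identity $\widetilde{\bQ}_{t+s}\overset{d}{=}\widetilde{\bQ}_t^{\ssup s}$ noted just before Theorem~\ref{continuity of the pi-function}, which yields
$$
\Pi_s\,\E[\nu_T]-\E[\nu_T]=\frac{1}{T}\int_T^{T+s}\P[\widetilde{\bQ}_t\in\cdot]\,\d t-\frac{1}{T}\int_0^s\P[\widetilde{\bQ}_t\in\cdot]\,\d t,
$$
of total variation at most $2s/T\to 0$. Continuity of $\Pi_s$ from Remark~\ref{rmk-Pi} then transfers invariance to the limit $\bar\vartheta$.

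For the reverse inequality $\inf_{\vartheta\in\mathfrak m}\mathscr I_\Phi(\vartheta)\ge -\Lambda(\beta)$, fix $\vartheta\in\mathfrak m$ and run the auxiliary chain $\xi_t:=\xi_0^{\ssup t}$ with $\xi_0\sim\vartheta$ drawn independently of the noise; $\pi_t$-invariance makes $\xi_t$ distributed as $\vartheta$ for every $t$. Applying It\^o's formula to the shifted partition function $\widehat Z_t^{\xi_0}:=\mathscr F_t(\xi_0)+(1-\Psi(\xi_0))\E[Z_t]$ in the manner of Lemma~\ref{Rewrite of the partition function} produces $\log\widehat Z_T^{\xi_0}=\overline{M}_T+\int_0^T\Phi(\xi_s)\,\d s$ with $\overline{M}_T$ a square-integrable martingale, and stationarity gives $\E[\log\widehat Z_T^{\xi_0}]=T\,\mathscr I_\Phi(\vartheta)$. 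Since $\mathscr F_T(\xi_0)$ is a mixture of shifted partition functions and $\Psi(\xi_0)\le 1$, translation invariance together with the uniform negative-moment bound \eqref{rmk-moment-total-mass} controls the comparison up to $o(T)$, giving $\E[\log\widehat Z_T^{\xi_0}]\ge \E[\log Z_T]+o(T)$ and hence $\mathscr I_\Phi(\vartheta)\ge-\Lambda(\beta)$. The almost-sure statement \eqref{(4.21)} then follows from Corollary~\ref{cor-I-Phi}, which identifies $\liminf\frac{1}{T}\log Z_T$ with $\liminf\mathscr I_\Phi(\nu_T)$ a.s., combined with Gaussian concentration of $\log Z_T$ around its mean via Borell's inequality applied to the quadratic-variation bound $\langle M\rangle_T\le T\beta^2 V(0)$ from Lemma~\ref{Rewrite of the partition function}.

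The main obstacle will be the comparison step in the lower bound: one has to match $\E[\log\widehat Z_T^{\xi_0}]$ with $\E[\log Z_T]$ to leading order when the dissipation correction $(1-\Psi(\xi_0))\E Z_t$ can be of the same order as $\mathscr F_T(\xi_0)$ itself. In the lattice setting of \cite{BC16} this was done via fractional-moment methods, and the continuous Gaussian counterpart here will rely on the decoupling computation of Proposition~\ref{lemma-step5} together with the moment bounds exploited in the proof of Proposition~\ref{prop-step1}.
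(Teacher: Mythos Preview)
Your overall architecture is right, and the two halves are recognizable in the paper, but there are two points where the proposal misfires.

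\textbf{The It\^o identity.} The equality $\log\widehat Z_T^{\xi_0}=\overline M_T+\int_0^T\Phi(\xi_s)\,\d s$ is false when $\xi_0$ has more than one orbit. Applying It\^o to $\log\big(\mathscr F_t(\xi_0)+(1-\Psi(\xi_0))\E Z_t\big)$, as in Lemma~\ref{lemma for Ito for scr F}, produces the drift
\[
\frac{\beta^2}{2}V(0)-\frac{\beta^2}{2}\sum_{i,j}\int V(x_1-x_2)\,\alpha_i^{\ssup s}(\d x_1)\,\alpha_j^{\ssup s}(\d x_2),
\]
a \emph{double} sum over pairs $(\widetilde\alpha_i,\widetilde\alpha_j)\in\xi_0\times\xi_0$, whereas $\Phi(\xi_s)$ keeps only the diagonal $i=j$. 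Since $V\ge 0$, the off-diagonal terms make the It\^o drift \emph{smaller} than $\Phi(\xi_s)$, so you only get $\E[\log\widehat Z_T^{\xi_0}]\le \int_0^T\E[\Phi(\xi_s)]\,\d s=T\,\mathscr I_\Phi(\vartheta)$. This inequality is exactly what is needed, and it is what the paper uses (see the display \eqref{inequality between scr I and log}); just do not claim equality. Also, a minor notational slip: the target value is $\lim_T\frac1T\E[\log Z_T]=\tfrac{\beta^2}{2}V(0)-\Lambda(\beta)$, not $-\Lambda(\beta)$.

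\textbf{The ``main obstacle'' is not one.} The comparison $\E[\log\widehat Z_T^{\xi_0}]\ge\E[\log Z_T]$ is an exact, one-line convexity argument, not an $o(T)$ estimate requiring decoupling or fractional moments. Write $\widehat Z_T^{\xi_0}=\Psi(\xi_0)\cdot\frac{\mathscr F_T(\xi_0)}{\Psi(\xi_0)}+(1-\Psi(\xi_0))\,\E Z_T$, apply concavity of $\log$, then Jensen on the probability $\Psi(\xi_0)^{-1}\sum_i\alpha_i$ together with $\E[\log Z_T[x]]=\E[\log Z_T]$ and $\log\E Z_T\ge\E\log Z_T$; see \eqref{using concavity}. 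Proposition~\ref{lemma-step5} and the moment bounds in Proposition~\ref{prop-step1} play no role here.

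\textbf{Where you differ from the paper.} Your route to the inequality $\inf_{\mathfrak m}\mathscr I_\Phi\le \lim_T\frac1T\E[\log Z_T]$ via subsequential limits of $\E[\nu_T]$ and the shift identity $\Pi_s\E[\nu_T]=\E[\nu_T^{\ssup s}]$ is correct and genuinely different: the paper obtains this direction from the almost-sure attraction $\mathscr W(\nu_T,\mathfrak m)\to 0$ of Theorem~\ref{thm-LLN} combined with Fatou. Your approach is shorter for the expectation statement, though note the paper needs Theorem~\ref{thm-LLN} anyway for Corollary~\ref{convergence to M}, so the saving is local. For the almost-sure part, your concentration argument (Borell on $\langle M\rangle_T\le T\beta^2V(0)$) is essentially the paper's Theorem~\ref{theoremF}; the reference to Corollary~\ref{cor-I-Phi} is unnecessary there.
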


\begin{remark}
	Recall that the renormalized partition function $\mathscr Z_T$ is directly related to the SHE solution $u_\eps$ (recall \eqref{she-intro} and \eqref{scaling}). Likewise $\log \mathscr Z_T$ is related to the \textit{Cole-Hopf} solution $h_\eps:=\log u_\eps$, satisfying the Kardar-Parisi-Zhang (KPZ) equation
	$$
	\partial_t h_\eps=\frac{1}{2}\Delta h_\eps+\Big[\frac{1}{2}|\triangledown h_\eps|^2-C_\eps\Big]+\beta\eps^{\frac{d-2}{2}}\dot B_\eps
	$$
	with $h_\eps(0,x)=0$, see \cite{CCM18,CCM19,CCM19-II} for recent progress about the behavior of the limiting solution as $\eps\to 0$ in $d\geq 3$ and for small $\beta$. Since $\frac{1}{T}\log\mathscr Z_T=\frac{1}{T}\log Z_T-\frac{\beta^2}{2}V(0)$, Theorem \ref{thm 4.9.} then provides a law of large numbers for the KPZ solution $h_\eps$.\qed
\end{remark}
Theorem \ref{thm 4.9.} will follow from the following (almost sure) law of large numbers. 

\begin{theorem}\label{thm-LLN}
The occupation measures $\nu_T=\frac 1 T \int_0^T \delta_{\widetilde\bQ_t} \d t$ are attracted by the set $\mathfrak m$, that is, $\mathscr W(\nu_T, \mathfrak m) \to 0$ almost surely.
\end{theorem}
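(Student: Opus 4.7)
Since $(\Mcal_1(\X), \mathscr W)$ is compact and $\mathfrak m$ is closed (Remark \ref{rmk-Pi}), it suffices to show that, $\P$-a.s., every weak cluster point $\vartheta$ of $(\nu_T)_{T \geq 0}$ lies in $\mathfrak m$: indeed, otherwise one could extract a subsequence $T_n$ with $\mathscr W(\nu_{T_n}, \mathfrak m) \geq \delta$ and, by compactness, a further subsequence converging to some $\vartheta \notin \mathfrak m$. To characterize invariance, fix a countable dense family $\mathcal L \subset \mathrm{Lip}_1(0)$; by Theorem \ref{continuity of the pi-function} combined with continuity of $\mu \mapsto \int \ell \, d\mu$ on $\Mcal_1(\X)$, each map $\pi_t \ell(\xi) := \int \ell(\xi') \pi_t(\xi, d\xi')$ lies in $C(\X)$. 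The condition $\vartheta = \Pi_t \vartheta$ then becomes $\int \pi_t \ell \, d\vartheta = \int \ell \, d\vartheta$, which needs to be checked only for $\ell \in \mathcal L$ and $t$ ranging over a countable dense subset of $\R_+$ (other $t$ being recovered from the semigroup identity $\Pi_t \Pi_s = \Pi_{t+s}$). So it is enough to prove, for each such fixed pair $(t, \ell)$, that
$$
X_T(t, \ell) := \frac{1}{T}\int_0^T \bigl[\pi_t \ell(\widetilde\bQ_s) - \ell(\widetilde\bQ_s)\bigr] \, ds \; \longrightarrow \; 0 \qquad \P\text{-a.s.}
$$

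The process $(\widetilde\bQ_s)_{s \geq 0}$ is Markov with transition kernel $\pi_t$: setting $\mathcal G_s := \sigma(\dot B|_{[0, s]})$, the noise increment $\dot B|_{[s, s+t]}$ is independent of $\mathcal G_s$, and the renewal identity for $\bQ_{t+r}$ displayed just before \eqref{pi-function} yields $\E[\ell(\widetilde\bQ_{s+t}) \mid \mathcal G_s] = \pi_t \ell(\widetilde\bQ_s)$. Setting $A_s := \pi_t \ell(\widetilde\bQ_s) - \ell(\widetilde\bQ_{s+t})$ and $Y_T := \int_0^T A_s \, ds$, one has
$$
X_T(t, \ell) = \frac{Y_T}{T} + \frac{1}{T}\int_0^T \bigl[\ell(\widetilde\bQ_{s+t}) - \ell(\widetilde\bQ_s)\bigr] \, ds,
$$
and the second summand telescopes to $T^{-1}\bigl(\int_T^{T+t} - \int_0^t\bigr)\ell(\widetilde\bQ_u)\, du = O\bigl(t\|\ell\|_\infty/T\bigr) \to 0$, since $\ell$ is bounded on the compact space $\X$. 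For the martingale-type term $Y_T$, the Markov identity gives $\E[A_s \mid \mathcal G_s] = 0$; moreover, whenever $s' \geq s + t$ the variable $A_s$ is $\mathcal G_{s'}$-measurable, so $\E[A_s A_{s'}] = \E[A_s\, \E(A_{s'} \mid \mathcal G_{s'})] = 0$. This kills every cross-correlation at lag $> t$, yielding
$$
\E[Y_T^2] \leq 4 \|\ell\|_\infty^2 \cdot \mathrm{Leb}\bigl\{(s, s') \in [0, T]^2 : |s - s'| \leq t\bigr\} \leq 8\, t\, T\, \|\ell\|_\infty^2.
$$

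Chebyshev then gives $\P(|Y_T|/T > \varepsilon) \leq 8 t/(T \varepsilon^2)$, which is summable along $T_n = n^2$, so Borel--Cantelli forces $Y_{n^2}/n^2 \to 0$ a.s.; the crude bound $|Y_T - Y_{n^2}| \leq 2 \|\ell\|_\infty (T - n^2) \leq 4 \|\ell\|_\infty n$ for $T \in [n^2, (n+1)^2]$ upgrades this to $Y_T/T \to 0$ a.s. Intersecting the countably many exceptional null sets over the pairs $(t, \ell)$ concludes the argument. The main anticipated obstacle is a fully rigorous justification of the Markov identity $\E[\ell(\widetilde\bQ_{s+t}) \mid \mathcal G_s] = \pi_t \ell(\widetilde\bQ_s)$ in the compactified/quotient setting of $\X$: the underlying GMC dynamics live naturally on $\Mcal_1(\R^d)$, and one must check that passing to translation orbits and embedding into $\X$ commutes measurably with the conditioning, so that the ``fresh noise'' construction defining $\pi_t$ matches the actual evolution of $\widetilde\bQ_s$. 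Beyond this bookkeeping, the remaining steps are classical second-moment and compactness arguments.
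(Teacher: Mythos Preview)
Your proof is correct and follows essentially the same strategy as the paper: both reduce the claim to showing that $\frac{1}{T}\int_0^T\bigl[\ell(\widetilde\bQ_{s+t})-\E[\ell(\widetilde\bQ_{s+t})\mid\mathcal F_s]\bigr]\,ds\to 0$ a.s.\ for Lipschitz $\ell$, and both control this via a moment bound plus Borel--Cantelli along a sparse subsequence with interpolation in between. The differences are purely technical: the paper uses a fourth-moment bound obtained from the BDG inequality applied to the discrete martingale $M_{n,t}(\ell)=\sum_{k=0}^n\theta_{t+ks}(\ell)$, whereas you use a direct second-moment bound exploiting the vanishing of correlations at lag $>t$ (which is simpler but forces the sparser grid $T_n=n^2$); and the paper passes from countably many $\ell$ to all $\ell\in\mathrm{Lip}_1(0)$ via Arzel\`a--Ascoli and equicontinuity of $T\mapsto\Theta_T(\cdot)/T$, thereby obtaining $\mathscr W(\nu_T^{\ssup s},\Pi_s\nu_T)\to 0$ outright, whereas you instead test cluster points of $\nu_T$ against a countable dense family in $\mathrm{Lip}_1(0)$ and rely on the continuity of $\xi\mapsto\pi_t\ell(\xi)$ furnished by Theorem~\ref{continuity of the pi-function}. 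Both routes are valid; yours is arguably more elementary on the probabilistic side, while the paper's avoids any explicit appeal to the semigroup property or to continuity of $t\mapsto\Pi_t\vartheta$ when passing from a countable dense set of times to all $t>0$.
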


We defer the proof of Theorem \ref{thm-LLN} to Section \ref{sec-final-details} and {first prove the following statements which will be used in the proof of Theorem \eqref{thm 4.9.}. 
\begin{lemma}
	\label{lemma for Ito for scr F}
	For $\xi\in\X$, if $\overline{\mathscr F}_T(\xi)=\mathscr F_T(\xi)+\E[Z_T-\mathscr F_T(\xi)]$ with $\mathscr F_T(\xi)$ defined in \eqref{scr-F}, then
	\begin{equation}\label{Ito for scr F}
\begin{aligned}
	&\log(\overline{\mathscr F}_T(\xi))\\
&\qquad=\int_0^T\d t\,\int_{\R^d}\frac{\beta}{\overline{\mathscr F}_t(\xi)}\sum_{\widetilde{\alpha}\in\xi}\int_{\R^d}\alpha(\d z)\bE_z\Big[\phi(y-W_t)\e^{\beta\mathscr H_t(W,B)}\Big]\dot B(t,\d y)\\
&\qquad\qquad+\frac{\beta^2}{2}\bigg[V(0)
		-\sum_{\widetilde{\alpha}_1,\widetilde\alpha_2\in\xi}\int_{\R^{2d}}V(x_2-x_1)\prod_{j=1}^{2}\frac{1}{\overline{\mathscr F}_t(\xi)}\int_{\R^d}\alpha_j(\d z_j)\bE_{z_j}\big[\1_{\{W_t^{\ssup j}\in\d x_j\}}\e^{\beta\mathscr H_t(W,B)}\big]\bigg].
	\end{aligned}
	%+\int_0^T\frac{\beta^2}{2}\bigg(V(0)-\frac{1}{\mathscr F_t(\xi)}\sum_{\widetilde{\alpha}_1\in\xi}\sum_{\widetilde{\alpha}_2\in\xi}\int_{\R^{2d}}\alpha_1(\d z_1)\alpha_2(\d z_2)\bE_{(z_1,z_2)}^{\otimes}\Big[V(W_t^{(1)}-W_t^{(2)})\e^{\beta(\mathscr H_t(W^{(1)},B)+\mathscr H_t(W^{(2)},B))}\Big]\bigg)\d t
	\end{equation}
	In particular,
	\begin{align}
	\begin{split}
	\label{expectation log scr F}
	\E\log(\overline{\mathscr F}_T(\xi))=\E\bigg[\int_0^T&\frac{\beta^2}{2}V(0)-\frac{\beta^2}{2}\sum_{\widetilde{\alpha}_1\in\xi}\sum_{\widetilde{\alpha}_2\in\xi}\int_{\R^{2d}}V(x_2-x_1)\\
	&\prod_{j=1}^{2}\frac{1}{\overline{\mathscr F}_t(\xi)}\int_{\R^d}\alpha_j(\d z_j)\bE_{z_j}\big[\1_{\{W_t^{\ssup j}\in\d x_j\}}\e^{\beta\mathscr H_t(W,B)}\big]\d t\bigg].
	\end{split}		
	\end{align}
\end{lemma}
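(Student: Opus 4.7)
The plan is to apply It\^o's formula to $\overline{\mathscr F}_t(\xi)=\mathscr F_t(\xi)+(1-\Psi(\xi))\E[Z_t]$ and then to its logarithm, following the template of the proof of Lemma~\ref{Rewrite of the partition function} (which is the special case $\xi=\widetilde\delta_0$, where $\overline{\mathscr F}_t(\widetilde\delta_0)=Z_t$). A useful initial observation is that $\mathscr F_0(\xi)=\Psi(\xi)$ and $\E[Z_0]=1$, so $\overline{\mathscr F}_0(\xi)=1$ and $\log\overline{\mathscr F}_0(\xi)=0$; consequently \eqref{Ito for scr F} is just the integrated form of $\d\log\overline{\mathscr F}_t(\xi)$ on $[0,T]$.

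First I would apply It\^o's formula to $Z_t[z]=\bE_z[\e^{\beta\mathscr H_t(W,B)}]$ pointwise in $z$, reproducing verbatim the computation from the proof of Lemma~\ref{Rewrite of the partition function} to obtain $\d Z_t[z]=\beta\,\bE_z\bigl[\int_{\R^d}\phi(y-W_t)\e^{\beta\mathscr H_t(W,B)}\dot B(t,\d y)\bigr]\d t+\tfrac{\beta^2 V(0)}{2}Z_t[z]\,\d t$. Integrating against $\alpha_i(\d z)$, summing over $i$, and adding the deterministic drift $(1-\Psi(\xi))\tfrac{\beta^2 V(0)}{2}\E[Z_t]\,\d t$ coming from $\tfrac{\d}{\d t}\E[Z_t]=\tfrac{\beta^2 V(0)}{2}\E[Z_t]$, the two drifts combine into $\tfrac{\beta^2 V(0)}{2}\overline{\mathscr F}_t(\xi)\,\d t$, yielding $\d\overline{\mathscr F}_t(\xi)=\d N_t+\tfrac{\beta^2 V(0)}{2}\overline{\mathscr F}_t(\xi)\,\d t$, where $N_t$ is the $\dot B$-stochastic integral with integrand $A_s(y):=\beta\sum_i\int\alpha_i(\d z)\bE_z\bigl[\phi(y-W_s)\e^{\beta\mathscr H_s(W,B)}\bigr]$.

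Next I would compute the quadratic variation. Since the deterministic piece of $\overline{\mathscr F}_t$ contributes nothing, It\^o isometry gives $\d\langle\overline{\mathscr F}_t\rangle=\int_{\R^d}A_t(y)^2\,\d y\,\d t$. Expanding the square by introducing an independent Brownian motion $W'$ (paired with an independent summation index $j$) and using $\int\phi(y-x_1)\phi(y-x_2)\,\d y=V(x_1-x_2)$, this becomes $\beta^2\sum_{i,j}\int\alpha_i(\d z_1)\alpha_j(\d z_2)\bE^{\otimes}_{z_1,z_2}\bigl[V(W_t-W_t')\e^{\beta(\mathscr H_t(W,B)+\mathscr H_t(W',B))}\bigr]\,\d t$. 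Applying $\d\log X=X^{-1}\d X-\tfrac12 X^{-2}\d\langle X\rangle$ with $X=\overline{\mathscr F}_t(\xi)$, the drift part contributes the constant $\tfrac{\beta^2 V(0)}{2}\,\d t$ and the quadratic-variation correction produces the negative double sum, which after recasting via the factorization $\alpha_i^{\ssup t}(\d x_i)=\overline{\mathscr F}_t(\xi)^{-1}\int\alpha_i(\d z_i)\bE_{z_i}[\1\{W_t\in\d x_i\}\e^{\beta\mathscr H_t(W,B)}]$ from \eqref{alpha after time t} gives precisely \eqref{Ito for scr F} after integrating in $t\in[0,T]$.

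To pass to \eqref{expectation log scr F}, take expectation in \eqref{Ito for scr F}; what must be verified is that the It\^o integral is a genuine mean-zero martingale. The main technical point is therefore the integrability check that $\overline{\mathscr F}_t(\xi)^{-1}A_t(y)$ lies in $L^2(\d t\,\d y\otimes\d\P)$ on $[0,T]$, which follows from the Gaussian moment bound on $\bE_z[\phi(y-W_t)\e^{\beta\mathscr H_t(W,B)}]$ together with the inverse-moment estimate for $\overline{\mathscr F}_t(\xi)$ already established (compare \eqref{rmk-moment-total-mass} and \eqref{Jensen for upper bound}) and an application of the Cauchy--Schwarz inequality. This integrability verification is the only obstacle of substance; everything else is a direct, routine extension of the It\^o calculation performed in Lemma~\ref{Rewrite of the partition function}.
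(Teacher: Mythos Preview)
Your proposal is correct and follows essentially the same route as the paper: apply It\^o's formula to $\overline{\mathscr F}_t(\xi)$ (splitting into martingale part plus the drift $\tfrac{\beta^2 V(0)}{2}\overline{\mathscr F}_t(\xi)\,\d t$), compute the quadratic variation via $\int\phi(y-x_1)\phi(y-x_2)\,\d y=V(x_1-x_2)$, and then apply It\^o again to $\log\overline{\mathscr F}_t(\xi)$. If anything, your write-up is slightly more careful than the paper's, which passes from \eqref{Ito for scr F} to \eqref{expectation log scr F} without explicitly addressing the integrability of the stochastic integrand that you flag via \eqref{rmk-moment-total-mass}--\eqref{Jensen for upper bound}.
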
 
\begin{proof}
	To prove \eqref{Ito for scr F}, we proceed in the same way as in the proof of Lemma \ref{Rewrite of the partition function}. Recall that
	\begin{align*}
	\mathscr F_T(\xi)&=\sum_{\widetilde{\alpha}\in\xi}\int_{\R^d}\int_{\R^d}\alpha(\mathrm{d}z) \bE_z\bigg[\1_{\{W_T\in\mathrm{d}x\}}\,\, \exp\big\{\beta\mathscr H_T(W,B)\big\}\bigg]=\sum_{\widetilde{\alpha}\in\xi}\int_{\R^d}\alpha(\mathrm{d}z) \bE_z\bigg[\exp\big\{\beta\mathscr H_T(W,B)\big\}\bigg]
	\end{align*}
	and thus $\overline{\mathscr F}_T(\xi)=\sum_{\widetilde{\alpha}\in\xi}\int_{\R^d}\alpha(\mathrm{d}z) \bE_z\big[\exp\big\{\beta\mathscr H_T(W,B)\big\}\big]+(1-\Psi(\xi))\E Z_T$, where $\Psi(\xi)=\sum_i \alpha_i(\R^d)$ as before.  By It\^{o}'s formula, 
	\begin{align}
	\label{Ito to scr F -1}
		\d \overline{\mathscr F}_T(\xi)&=\beta\int_{\R^d}\sum_{\widetilde{\alpha}\in\xi}\int_{\R^d}\alpha(\d z)\bE_z\Big[\phi(y-W_T)\e^{\beta\mathscr H_T(W,B)}\Big]\dot B(T,\d y)\d T\\
		&+\frac{\beta^2}{2}\sum_{\widetilde{\alpha}\in\xi}\int_{\R^d}\alpha(\d z)\bE_z\Big[V(0)\e^{\beta\mathscr H_T(W,B)}\Big]\d T
		+(1-\Psi(\xi))\frac{\beta^2}{2}V(0)\e^{\frac{\beta^2}{2}TV(0)}\d T\\
		\label{Ito to scr F -1.5}
		&=\beta\int_{\R^d}\sum_{\widetilde{\alpha}\in\xi}\int_{\R^d}\alpha(\d z)\bE_z\Big[\phi(y-W_T)\e^{\beta\mathscr H_T(W,B)}\Big]\dot B(T,\d y)\d T+\frac{\beta^2}{2}V(0)\overline{\mathscr F}_T(\xi)\d T.
	\end{align}
	The quadratic variation of the above term is now given by
	\begin{align}
	\label{Ito to scr F -2}
	\!\!\!\!\!\!\!\!\!\!\!\d \langle\overline{\mathscr F}_T(\xi)\rangle=\beta^2\sum_{\widetilde{\alpha}_1\in\xi}\sum_{\widetilde{\alpha}_2\in\xi}\int_{\R^{2d}}\alpha_1(\d z_1)\alpha_2(\d z_2)\bE^{\otimes}_{(z_1,z_2)}\Big[V(W_T^{\ssup 1}-W_T^{\ssup 2})\e^{\beta(\mathscr H_T(W^{\ssup 1},B)+\mathscr H_T(W^{\ssup 2},B))}\Big]\d T
	\end{align}
	with $W^{\ssup 1}$ and $W^{\ssup 2}$ being two independent Brownian motions starting at $z_1$ and $z_2$, respectively. We now apply It\^{o}'s formula to $\log(\overline{\mathscr F}_T(\xi))$ and plug in \eqref{Ito to scr F -1}-\eqref{Ito to scr F -1.5} as well as \eqref{Ito to scr F -2} to get
	\begin{align*}
		\log(\overline{\mathscr F}_T(\xi))&=\int_0^T\frac{1}{\overline{\mathscr F}_t(\xi)}\d\overline{\mathscr F}_t(\xi)-\frac{1}{2}\int_0^T\frac{1}{\overline{\mathscr F}^2_t(\xi)}\d\langle\overline{\mathscr F}_t(\xi)\rangle\\
		&=\int_0^T\int_{\R^d}\frac{\beta}{\overline{\mathscr F}_t(\xi)}\sum_{\widetilde{\alpha}\in\xi}\int_{\R^d}\alpha(\d z)\bE_z\Big[\phi(y-W_t)\e^{\beta\mathscr H_t(W,B)}\Big]\dot B(t,\d y)+\frac{\beta^2}{2}V(0)\\
		&-\frac{\beta^2}{2}\sum_{\widetilde{\alpha}_1\in\xi}\sum_{\widetilde{\alpha}_2\in\xi}\int_{\R^{2d}}V(x_2-x_1)\prod_{j=1}^{2}\frac{1}{\overline{\mathscr F}_t(\xi)}\int_{\R^d}\alpha_j(\d z_j)\bE_{z_j}\big[\1_{\{W_t^{\ssup j}\in\d x_j\}}\e^{\beta\mathscr H_t(W,B)}\big]\d t,
	\end{align*}
	which proves \eqref{Ito for scr F} and, therefore, \eqref{expectation log scr F}.
\end{proof}
}	
	
	%\textcolor{blue}{\textbf{The lemma is equal to Lemma 4.10. in \cite{BS2016} but the proof is different}}
\noindent{\bf{Proof of Theorem \ref{thm 4.9.} [Assuming Theorem \ref{thm-LLN}]:}}  {Note that by the definitions of $\mathscr I_{\Phi}$ and $\Pi_t$ (recall Remark \ref{rmk-Pi}), we have for any $t$
	\begin{align}%\label{assertion-2-thm4.9.}
		\mathscr I_{\Phi}(\Pi_t\delta_{\xi})=\int_{\X}\Phi(\xi^{\prime})\Pi_t(\delta_{\xi},\mathrm{d}\xi^{\prime})=\int_{\X}\Phi(\xi^{\prime})\, \mathbf P\big[\xi^{\ssup t} \in \d\xi^\prime|\xi\big]=\E\big[\Phi(\xi^{\ssup t})\big]. 
	\end{align}
	On the other hand, $\Phi(\xi^{\ssup t})=\frac{\beta^2}{2}V(0)\big(1-\frac{1}{V(0)}\sum_{\widetilde{\alpha}\in\xi}\int_{\R^d\times\R^d}V(x_2-x_1)\prod_{j=1}^2\alpha^{\ssup t}(\mathrm{d}x_j)\big)$	and so
	\begin{align}
	\label{identity between I and Phi}
	\mathscr I_{\Phi}(\Pi_t\delta_{\xi})=\E\bigg[\frac{\beta^2}{2}V(0)\Big(1-\frac{1}{V(0)}\sum_{\widetilde{\alpha}\in\xi}\int_{\R^d\times\R^d}V(x_2-x_1)\prod_{j=1}^2\alpha^{\ssup t}(\mathrm{d}x_j)\Big)\bigg].
	\end{align}
	We claim that
	\begin{align}
		\label{inequality between scr I and log}
		\int_0^T\mathscr I_{\Phi}(\Pi_t\delta_{\xi})\d t\geq \E\big[\log(\mathscr F_T(\xi)+\E[Z_T-\mathscr F_T(\xi)])\big].
	\end{align}
	For proving \eqref{inequality between scr I and log}, we start by considering the sum on the right-hand side of \eqref{identity between I and Phi}. Since $V,\alpha,Z_t$ and $\mathscr F_t(\xi)$ are nonnegative,
	\begin{align*}
		\sum_{\widetilde{\alpha}\in\xi}&\int_{\R^d\times\R^d}V(x_2-x_1)\prod_{j=1}^2\alpha^{\ssup t}(\mathrm{d}x_j)\\
		&=\sum_{\widetilde{\alpha}\in\xi}\int_{\R^d\times\R^d}V(x_2-x_1)\prod_{j=1}^2\frac{1}{\mathscr F_t(\xi)+\E[Z_t-\mathscr F_t(\xi)]}\alpha(\d z_j)\bE_{z_j}\big[\1_{\{W_t^{\ssup j}\in \d x_j\}}\e^{\beta\mathscr H_t(W,B)}\big]\\
		&\leq \sum_{\widetilde{\alpha}_1\in\xi}\sum_{\widetilde{\alpha}_2\in\xi}\int_{\R^d\times\R^d}V(x_2-x_1)\prod_{j=1}^2\frac{1}{\mathscr F_t(\xi)+\E[Z_t-\mathscr F_t(\xi)]}\alpha_j(\d z_j)\bE_{z_j}\big[\1_{\{W_t^{\ssup j}\in \d x_j\}}\e^{\beta\mathscr H_t(W,B)}\big],
	\end{align*}
	thus by \eqref{identity between I and Phi},
	\begin{align*}
		\mathscr I_{\Phi}(\Pi_t\delta_{\xi})\geq \E\Big[\frac{\beta^2}{2}&V(0)-\frac{\beta^2}{2}\sum_{\widetilde{\alpha}_1\in\xi}\sum_{\widetilde{\alpha}_2\in\xi}\int_{\R^d\times\R^d}V(x_2-x_1)\\
		&\prod_{j=1}^2\frac{1}{\mathscr F_t(\xi)+\E[Z_t-\mathscr F_t(\xi)]}\alpha_j(\d z_j)\bE_{z_j}\big[\1_{\{W_t^{\ssup j}\in \d x_j\}}\e^{\beta\mathscr H_t(W,B)}\big]\Big].
	\end{align*}
	The claim in \eqref{inequality between scr I and log} now follows from Lemma \ref{lemma for Ito for scr F}. We restrict to the case, where the total mass functional satisfies $\Psi(\xi)>0$ and we use the concavity of the logarithm, which implies that
	\begin{align}
	\begin{split}
	\label{using concavity}
	\E \big[\log(\mathscr F_T(\xi)+\E[Z_T-\mathscr F_T(\xi)])\big]&=\E\Big[\log\big(\Psi(\xi)\frac{\mathscr F_T(\xi)}{\Psi(\xi)}+(1-\Psi(\xi))\E Z_T\big)\Big]\\
	&\geq \Psi(\xi)\E\log\Big(\frac{\mathscr F_T(\xi)}{\Psi(\xi)}\Big)+(1-\Psi(\xi))\log\big(\E Z_T\big).
	\end{split}
	\end{align}
	As $\int\Psi(\xi)^{-1}\sum_{\widetilde{\alpha}\in\xi}\alpha(\d x)=1$, we can use Jensen's inequality, so that
	\begin{align*}
	\log\Big(\frac{\mathscr F_T(\xi)}{\Psi(\xi)}\Big)=\log\bigg(\int_{\R^d}\bigg(\frac{\sum_{\widetilde{\alpha}\in\xi}\alpha(\d z)}{\Psi(\xi)}\bigg)\bE_z\Big[\e^{\beta\mathscr H_T(W,B)}\Big]\bigg)
	\geq \int_{\R^d}\bigg(\frac{\sum_{\widetilde{\alpha}\in\xi}\alpha(\d z)}{\Psi(\xi)}\bigg)\log\bigg(\bE_z\Big[\e^{\beta\mathscr H_T(W,B)}\Big]\bigg)
	\end{align*}
	and since $\bE_z\big[\e^{\beta\mathscr H_T(W,B)}\big]\overset{(d)}{=}Z_T$,
	$$
	\E\log\Big(\frac{\mathscr F_T(\xi)}{\Psi(\xi)}\Big)\geq \int_{\R^d}\bigg(\frac{\sum_{\widetilde{\alpha}\in\xi}\alpha(\d z)}{\Psi(\xi)}\bigg)\E\log Z_T=\E\log Z_T.
	$$
	By using Jensen's inequality once more, $\log\E Z_T\geq \E\log Z_T$, and both lower bounds, together with \eqref{inequality between scr I and log} and \eqref{using concavity}, yield  $\int_0^T \mathscr I_\Phi(\Pi_t \delta_\xi) \, \d t\geq \E[\log Z_T]$ for any $\xi\in \X$. The last inequality, when $\Psi(\xi)=0$, follows immediately by Jensen's inequality. Indeed, if $\Psi(\xi)=0$, $\mathscr I_\Phi(\Pi_t \delta_\xi)=\frac{\beta^2}{2}V(0)$ for all $t$ and so $\int_0^T \mathscr I_\Phi(\Pi_t \delta_\xi) \, \d t=\log \E Z_T\geq \E[\log Z_T]$. Since $\int_0^T \mathscr I_\Phi(\Pi_t \delta_\xi) \, \d t\geq \E[\log Z_T]$ now holds unconditionally,} 
    for any $\vartheta\in \mathfrak m$, 
        $$
        \begin{aligned}
        \frac 1 T\E[\log Z_T] \leq \frac 1 T\int_{\X} \vartheta(\d\xi)\,\, \int_0^T \mathscr I_\Phi(\Pi_t \delta_\xi) \, \d t &= \frac 1 T \int_0^T \d t \int_{\X} \vartheta(\d\xi) \, \mathscr I_\Phi(\Pi_t \delta_\xi) \\
        &= \frac 1 T \int_0^T \d t \mathscr I_\Phi(\Pi_t\vartheta) 
        = \mathscr I_\Phi(\vartheta)
        \end{aligned}
        $$
        proving that, $\limsup_{T\to\infty}\frac 1 T\E[\log Z_T] \leq \inf_{\vartheta\in\mathfrak m} \mathscr I_\Phi(\vartheta)$. To prove the corresponding lower bound, note that by Corollary \ref{cor-I-Phi}, 
        $\liminf_{T\to\infty} \frac 1 T \log Z_T= \liminf_{T\to\infty} \mathscr I_\Phi(\nu_T)$ almost surely. Now Theorem \ref{thm-LLN} dictates $\mathscr W(\nu_T,\mathfrak m)\to 0$ almost surely and we know 
        that $\mathscr I_\Phi(\cdot)$ is continuous. Therefore, 
        \begin{equation}\label{assertion-4-thm4.9.}
        \liminf_{T\to\infty}\frac 1 T \log Z_T = \liminf_{T\to\infty} \mathscr I_\Phi(\nu_T)\geq \inf_{\vartheta\in \mathfrak m} \mathscr I_\Phi(\vartheta) \qquad\mbox{a.s.}
        \end{equation}
         On the other hand, again by Corollary \ref{cor-I-Phi}, $\frac 1 T\E[\log Z_T]= \E[\mathscr I_\Phi(\nu_T)]$. Since both $\Phi$ and $\mathscr I_\Phi$ are nonnegative, by Fatou's lemma and \eqref{assertion-4-thm4.9.}, 
        $$
        \liminf_{T\to\infty}\frac 1 T\E[\log Z_T] = \liminf_{T\to\infty} \E[\mathscr I_\Phi(\nu_T)] \geq \E\big[\liminf_{T\to\infty} \mathscr I_\Phi(\nu_T)] \geq  \inf_{\vartheta\in\mathfrak m} \mathscr I_\Phi(\vartheta)
        $$
        and, therefore, $ \lim_{T\to\infty}\frac 1 T\E[\log Z_T]   =\inf_{\vartheta\in\mathfrak m} \mathscr I_\Phi(\vartheta)$. Finally, we apply Theorem \ref{theoremF} with any $\delta\in (0,1)$ to 
        conclude 
	$$\lim_{T\rightarrow\infty}\frac{\log Z_T}{T}=\lim_{T\rightarrow\infty}\frac{\E\log Z_T}{T}= \inf_{\vartheta\in\mathfrak {m}}\mathscr I_{\Phi}(\vartheta)\quad\text{a.s.}$$
	\qed

%\subsection{A technical lemma.}\label{sec-Burkholder}

\section{Final details}\label{sec-final-details}
We will now conclude the proof of Theorem \ref{APA} in this section. Given the results of Section \ref{section time dependence} and \ref{section Space X}, the arguments appearing 
in this part will closely follow the approach of \cite{BC16} adapted to our setting modulo slight modifications. In order to keep the present material self-contained, we will spell out the technical details.

\subsection{Proof of Theorem \ref{thm-LLN}.} 
In this section, we will complete the proof of Theorem \ref{thm-LLN} for which we will need a technical fact. 

Recall that we denote by $\mathrm{Lip}_1(0)$ the space of all Lipschitz functions $\ell : \X\to \R$ vanishing at $\widetilde 0$ and having Lipschitz constant $\leq 1$.  Then, for any fixed $\ell \in  \mathrm{Lip}_1(0)$, 
and $s\geq 0$, we set

\begin{equation}\label{Upsilon-T}
\begin{aligned}
\Theta_T(\ell)=\int_0^T\theta_t(\ell)\mathrm{d}t
\qquad\mbox{where}\qquad
\theta_t(\ell)=\ell(\widetilde\bQ_{t+s})-\E\big[ \ell(\widetilde\bQ_{t+s}) |\mathcal{F}_t\big].
\end{aligned}
\end{equation} 
The next lemma asserts that for any fixed $\ell$, $\Theta_T(\ell)$ has a sublinear growth at infinity. 
\begin{lemma}
	\label{Proposition for almost surely convergence}
	For any $\ell\in \mathrm{Lip}_1(0)$, 
	\begin{align}
		\label{Upsilon converges a.s.}
		\lim_{T\rightarrow\infty}\frac{|\Theta_T(l)|}{T}=0\quad\text{a.s.}
	\end{align}
\end{lemma}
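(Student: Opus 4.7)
The plan is to estimate $\E[\Theta_T(\ell)^2]$ by exploiting a ``delayed martingale difference'' structure of the integrand $(\theta_t(\ell))_{t\ge 0}$, deduce almost sure convergence along a polynomial subsequence $T_n=n^2$ via Chebyshev and Borel--Cantelli, and then interpolate in between using the trivial pointwise bound on $|\theta_t(\ell)|$.

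First I would record that $(\X,\mathbf{D})$ has bounded diameter: each summand in \eqref{eq-D} is at most $2^{-r}$, so $\mathbf{D}\le 2$, and any $\ell\in\mathrm{Lip}_1(0)$ satisfies $\|\ell\|_\infty\le 2$, giving $|\theta_t(\ell)|\le 4$ pointwise. The crucial observation is that $\widetilde{\bQ}_{t+s}$ is $\mathcal{F}_{t+s}$-measurable, hence so is $\theta_t(\ell)$, while by construction $\E[\theta_t(\ell)\mid\mathcal{F}_t]=0$. Therefore, whenever $t_1\le t_2$ with $t_2-t_1\ge s$, the inclusion $\mathcal{F}_{t_1+s}\subseteq\mathcal{F}_{t_2}$ together with the tower property yields
\[
\E\bigl[\theta_{t_1}(\ell)\,\theta_{t_2}(\ell)\bigr]=\E\bigl[\theta_{t_1}(\ell)\,\E[\theta_{t_2}(\ell)\mid\mathcal{F}_{t_2}]\bigr]=0.
\]
Combining this orthogonality with Fubini and the uniform bound $|\theta_t(\ell)|\le 4$, only the strip $\{|t_1-t_2|<s\}$ contributes to the second moment, so
\[
\E\bigl[\Theta_T(\ell)^2\bigr]\le \int_0^T\!\!\int_0^T \1_{\{|t_1-t_2|<s\}}\,16\,\d t_1\,\d t_2\le C\, s\, T
\]
for a universal constant $C$.

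With this $O(T)$ variance bound in hand, Chebyshev's inequality gives $\P(|\Theta_{n^2}(\ell)|>\varepsilon n^2)\le Cs/(\varepsilon^2 n^2)$, which is summable in $n$, so Borel--Cantelli produces $\Theta_{n^2}(\ell)/n^2\to 0$ almost surely. To bridge the gaps, for $T\in[n^2,(n+1)^2]$ the trivial estimate yields $|\Theta_T(\ell)-\Theta_{n^2}(\ell)|\le 4(T-n^2)\le 4(2n+1)$, and consequently
\[
\frac{|\Theta_T(\ell)|}{T}\le \frac{|\Theta_{n^2}(\ell)|}{n^2}\cdot\frac{n^2}{T}+\frac{4(2n+1)}{T}\;\longrightarrow\; 0\qquad\text{a.s.,}
\]
which finishes the argument. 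I expect the main technical point to be isolating the orthogonality relation leading to the $O(T)$ variance bound; once that is in place, the subsequence plus interpolation step is routine.
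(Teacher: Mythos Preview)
Your proof is correct. The orthogonality $\E[\theta_{t_1}(\ell)\theta_{t_2}(\ell)]=0$ for $|t_1-t_2|\ge s$ is exactly the right structural observation, and the second-moment bound $\E[\Theta_T(\ell)^2]=O(T)$, Chebyshev along $T_n=n^2$, Borel--Cantelli, and the interpolation using $|\theta_t(\ell)|\le 4$ all go through as stated.

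The paper's argument reaches the same conclusion by a slightly different route. Rather than a second-moment bound, it writes $\Theta_{sn}(\ell)=\int_0^s M_{n-1,t}(\ell)\,\d t$ with $M_{n,t}(\ell)=\sum_{k=0}^n\theta_{t+ks}(\ell)$ a discrete $(\mathcal{F}_{t+(n+1)s})_n$-martingale, applies the Burkholder--Davis--Gundy inequality to obtain a \emph{fourth}-moment bound $\E[\Theta_{sn}(\ell)^4]\le Cn^2$, and then runs Borel--Cantelli along the \emph{linear} subsequence $T_n=sn$. The trade-off is clear: your second-moment estimate is more elementary (no BDG, no discrete martingale decomposition), but it forces a sparser subsequence $T_n=n^2$ to make the tail probabilities summable; the paper pays the price of a fourth moment to work along the denser grid $sn$. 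Both interpolations are trivial because $|\theta_t|$ is uniformly bounded. Your approach is a clean simplification that avoids the martingale machinery entirely.
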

\begin{proof}
We claim that for any $\ell\in \mathrm{Lip}_1(0)$, $s\geq 0$ and $n\in \N$, there exists a constant $C=C(\ell,s)\in (0,\infty)$ such that 
\begin{equation}\label{eq-star}
\E\big[\Theta_{s n} (\ell) ^4\big] \leq C n^2.
\end{equation}
The above estimate implies that $\sum_{n=1}^\infty \P\big[ \frac{\Theta_{sn}(\ell)}{sn} \geq (sn)^{-1/5}\big] \leq  C^\prime  \sum_{n=1}^\infty  n^{-6/5} <\infty$. Then \eqref{Upsilon converges a.s.} follows at once since 
with $n=\lfloor \frac{T}s\rfloor$ we have $\frac{\Theta_T(\ell)}T =\frac{\Theta_{sn}(\ell)}T + \frac 1 T\int_{sn}^T \theta_t(\ell) \, \d t$. The first term converges almost surely to $0$ by Borel-Cantelli lemma, while the second term  is bounded above by $2s/T$, since $|\theta_t(\ell)|\leq 2$. 

It remains to check \eqref{eq-star}. Note that for any $t \in [0,s)$, $M_{n,t}(\ell)= \sum_{k=0}^n \theta_{t+ks}(\ell)$ is an $(\mathcal F_{t+ (n+1)s})_{n\in\N_0}$ martingale and $\Theta_{s n} (\ell)= \int_0^s M_{n-1,t}(\ell) \, \d t$. Then by the Burkholder-Davis-Gundy inequality, $\E[ M_{n,t}(\ell)^4] \leq C (n+1)^2$ and subsequently, by Jensen's inequality, 
$\E\big[\big(\int_0^s M_{n-1,t}(\ell) \, \d t\big)^4\big] \leq  C n^2$, which proves \eqref{eq-star}.
\end{proof}

We will now conclude the following.

\noindent{\bf{Proof of Theorem \ref{thm-LLN}:}} For any fixed $s\geq 0$, we set 
\begin{equation}\label{nu-s}
\nu_T^{\ssup s}=\frac{1}{T}\int_0^T\delta_{\widetilde\bQ_{t+s}}\mathrm{d}t
\end{equation}
and recall from \eqref{Wasserstein-dual} the dual representation of the Wasserstein metric $\mathscr{W}(\vartheta,\vartheta^\prime)=\sup_{\ell\in\mathrm{Lip}_1(0)} \big| \int_{\X} \ell(\xi) \vartheta(\d\xi) - \int_{\X} \ell(\xi) \vartheta^\prime(\d\xi)\big|$ on $\Mcal_1(\X)$. Then for any $\ell\in \mathrm{Lip}_1(0)$, 
\begin{align*}
		\mathscr {W}(\nu_T,\nu_T^{\ssup s})%&=%\sup_{\ell\in \mathcal{L}}\left(\int_{\X}l(\xi)\nu_T(\mathrm{d}\xi)-\int_{\X}l(\xi)\nu_T^{(s)}(\mathrm{d}\xi)\right)\\
		%&=\sup_{l\in \mathcal{L}}\left(\frac{1}{T}\int_0^T\int_{\X}l(\xi)\delta_{\xi_{\hP}^{(t)}}(\mathrm{d}\xi)\mathrm{d}t-\frac{1}{T}\int_0^T\int_{\X}l(\xi)\delta_{\xi_{\hP}^{(t+s)}}(\mathrm{d}\xi)\mathrm{d}t\right)\\
		=\sup_{\ell}\left(\frac{1}{T}\int_0^T \ell(\widetilde\bQ_t)\mathrm{d}t-\frac{1}{T}\int_s^{T+s}\ell(\widetilde\bQ_t)\mathrm{d}t\right)
		&=\sup_{\ell}\left(\frac{1}{T}\int_0^s\ell(\widetilde\bQ_t)\mathrm{d}t-\frac{1}{T}\int_0^{s}\ell(\widetilde\bQ_{T+t})\mathrm{d}t\right)\\
		%&=\sup_{l\in \mathcal{L}}\left(\frac{1}{T}\int_0^sl(\xi_{\hP}^{(t)})-l(\xi_{\hP}^{(T+t)})\mathrm{d}t\right)\\
		&\leq \frac{1}{T}2s, 
	\end{align*} 
 and Theorem \ref{thm-LLN} follows once we show that, for any fixed $s\geq 0$, 
\begin{equation}\label{nu-s-K}
\mathscr {W}(\nu_T^{\ssup s},\Pi_s\nu_T)\rightarrow 0.
\end{equation}	
Recall \eqref{Upsilon-T} and note that
	$$
	\mathscr{W}(\nu_T^{\ssup s},\Pi_s\nu_T)=\sup_{\ell}\frac{\Theta_T(l)}{T}.
	$$
	By the definition of the metric $\mathbf D$ on $\X$, for any $\ell\in \mathrm{Lip}_1(0)$, %$\widetilde{\textbf{0}}=(\widetilde{0},\widetilde{0},...)\in\X$ is an element of $\X$ and so the metric $\mathbf{D}$ is defined for $\widetilde{0}$, such that 
	$\sup_{\xi\in \X}|\ell(\xi)|\leq\sup_{\xi\in\X}\mathbf{D}(\xi,\widetilde 0)\leq 2$ and 
	thus, the family of functions $\ell\in \mathrm{Lip}_1(0)$  is equicontinuous and closed in the uniform norm. By Ascoli's theorem, this space is then compact and is also separable. 	
	Lemma \ref{Proposition for almost surely convergence} guarantees 
	\begin{align}
		\label{(4.4)}
		\lim_{T\rightarrow\infty}\frac{\Theta_T(\ell_n)}{T}=0\quad\text{for all }n\geq 1
	\end{align}
	for any countable dense set $(\ell_n)_n$. Further, given any $\ell_1,\ell_2\in\mathrm{Lip}_1(0)$ with $\|\ell_1-\ell_2\|_{\infty}<\delta$, we have
	$|\Theta_T(\ell_1)- \Theta_T(\ell_2)|<  2\delta T$ by \eqref{Upsilon-T}. 
	Thus $(\Theta_T(\cdot)/T)_{T\geq 0}$ is equicontinuous on the compact metric space $\mathrm{Lip}_1(0)$, and since this family $\Theta_T/T$ converges pointwise to $0$ on a dense subset $(\ell_n)_n$, and 
	again by the Ascoli's theorem this convergence is uniform. Thus,  $\mathscr {W}(\nu_T^{\ssup s},\Pi_s\nu_T)\rightarrow 0$ as $T\to\infty$, which proves \eqref{nu-s-K}, and thus also Theorem \ref{thm-LLN}. 
	\qed

We will now deduce a corollary to Theorem \ref{thm-LLN}. Let us set  
\begin{equation}\label{eq-m-0}
\mathfrak {m}_0=\big\{\vartheta_0\in\mathfrak {m}:\mathscr I_{\Phi}(\vartheta_0)=\inf_{\vartheta\in\mathfrak {m}}\mathscr I_{\Phi}(\vartheta)\big\},
\end{equation}
with $\mathscr I_\Phi$ defined in \eqref{scr-I}. Again the continuity of $\vartheta\to \mathscr I_\Phi(\vartheta)$ guarantees compactness of $\mathfrak m_0$ (recall Remark \ref{rmk-Pi}).

 \begin{cor}
	\label{convergence to M}
	The measure $\nu_T$ converges in the Wasserstein metric to $\mathfrak m_0$ for $T\rightarrow\infty$.
\end{cor}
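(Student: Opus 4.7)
The plan is to combine the three main ingredients already established: Theorem \ref{thm-LLN} (the occupation measures are attracted by $\mathfrak m$), Theorem \ref{thm 4.9.} (the variational formula for the free energy), and Lemma \ref{Rewrite of the partition function} (which identifies $\tfrac1T\log Z_T$ with $\mathscr I_\Phi(\nu_T)$ up to a vanishing martingale remainder). The argument will then be a soft compactness-plus-continuity argument carried out by contradiction.

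As a first step, I would combine the almost sure convergence $\tfrac1T\log Z_T \to \inf_{\vartheta\in\mathfrak m}\mathscr I_\Phi(\vartheta)$ from Theorem \ref{thm 4.9.} with the identity $\tfrac1T\log Z_T - \mathscr I_\Phi(\nu_T)\to 0$ almost surely (which follows from Lemma \ref{Rewrite of the partition function}, since $\mathscr I_\Phi(\nu_T)=\tfrac1T\int_0^T\Phi(\widetilde\bQ_t)\,\d t$). This upgrades the variational formula to the pointwise statement
\begin{equation*}
\mathscr I_\Phi(\nu_T)\;\longrightarrow\;\inf_{\vartheta\in\mathfrak m}\mathscr I_\Phi(\vartheta)\qquad\text{a.s.}
\end{equation*}
Thus along every realization in a full-measure event, the value functional evaluated at $\nu_T$ converges to the optimal value.

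Suppose now, for contradiction, that $\mathscr W(\nu_T,\mathfrak m_0)\not\to 0$ on an event of positive probability. On this event there exists a sequence $T_n\to\infty$ and $\delta>0$ with $\mathscr W(\nu_{T_n},\mathfrak m_0)\ge\delta$. Since $\X$ is compact by Theorem \ref{thm-MV14}, so is $\mathcal M_1(\X)$ under the Wasserstein topology, and hence we can pass to a further subsequence $T_{n_k}$ along which $\nu_{T_{n_k}}\to\vartheta_\ast$ in $\mathscr W$. By Theorem \ref{thm-LLN}, $\mathscr W(\nu_T,\mathfrak m)\to 0$ almost surely, and $\mathfrak m$ is closed by Remark \ref{rmk-Pi}; therefore $\vartheta_\ast\in\mathfrak m$.

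Finally, since $\Phi$ is continuous on the compact space $\X$ (Lemma \ref{lower semi cont for Phi}), the map $\mathscr I_\Phi$ is continuous on $\mathcal M_1(\X)$. Hence $\mathscr I_\Phi(\nu_{T_{n_k}})\to\mathscr I_\Phi(\vartheta_\ast)$, and combined with the almost sure convergence established in the first step this forces $\mathscr I_\Phi(\vartheta_\ast)=\inf_{\vartheta\in\mathfrak m}\mathscr I_\Phi(\vartheta)$, i.e., $\vartheta_\ast\in\mathfrak m_0$. But then $\mathscr W(\nu_{T_{n_k}},\mathfrak m_0)\le\mathscr W(\nu_{T_{n_k}},\vartheta_\ast)\to 0$, contradicting the choice of $\delta$. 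The assertion of the corollary follows. There is no real obstacle in this step, as all of the heavy lifting (continuity of the transition kernels, the martingale representation of the free energy, the law of large numbers for $\nu_T$) has been carried out in the previous sections; this corollary is simply the packaging of those facts.
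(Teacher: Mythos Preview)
Your proof is correct and follows essentially the same route as the paper: both combine $\mathscr I_\Phi(\nu_T)\to\inf_{\mathfrak m}\mathscr I_\Phi$ (via Corollary~\ref{cor-I-Phi} and Theorem~\ref{thm 4.9.}) with $\mathscr W(\nu_T,\mathfrak m)\to 0$ (Theorem~\ref{thm-LLN}), continuity of $\mathscr I_\Phi$, and compactness of $\mathfrak m$. The only difference is cosmetic: you spell out the subsequence/contradiction argument explicitly, whereas the paper compresses it into a one-line appeal to ``triangle inequality and compactness.''
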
 
%\textcolor{blue}{\textbf{Absolutely the same as Theorem 4.11. in \cite{BS2016}}}
\begin{proof}
The proof is a straightforward application of the triangle inequality combined with the preceding results. Indeed, by Corollary \ref{cor-I-Phi}, $|\mathscr I_\Phi(\nu_T)- \frac 1 T \log Z_T| \to 0$ almost surely, 
while Theorem \ref{thm 4.9.} dictates $|\frac 1 T\log Z_T- \inf_{\mathfrak m} \mathscr I_\Phi| \to 0$ almost surely. Therefore, $\mathscr I_\Phi(\nu_T)$ can be made arbitrarily close to $\inf_{\mathfrak m} \mathscr I_\Phi$ for $T$ large enough. Combining 
this statement with the fact that $\mathscr W(\nu_T,\mathfrak m)\to 0$ (from Theorem \ref{thm-LLN}), continuity of the functional $\mathscr I_\Phi(\cdot)$ (from Lemma \ref{lower semi cont for Phi}), compactness of $\mathfrak m$ (from Remark \ref{rmk-Pi}) and triangle inequality proves the desired claim.
\end{proof}

\subsection{Proof of Theorem \ref{APA}}
\label{section Proof}

In this section, we will conclude the proof of Theorem \ref{APA}, which involves two main steps. 

\noindent{\bf{Step 1:}}  
 With the compact set $\mathfrak m_0\subset \mathfrak m$ defined in \eqref{eq-m-0}, the first step shows that in the very strong disorder regime, under any $\vartheta\in \mathfrak m_0$  there is no disintegration of mass. 
\begin{theorem}
	\label{NoDisint}
	If $\beta$ is large enough such that $\Lambda(\beta)>0$ (see Theorem \ref{Lyapunov}), then $\vartheta\big[\xi\in\X:\Psi(\xi)=1\big]=1$ for any $\vartheta\in\mathfrak m_0$, where $\Psi(\xi)=\sum_i \alpha_i(\R^d)$ is the total mass functional on $\X$. 
\end{theorem}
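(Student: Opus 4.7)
The plan is to show $\int_{\X}\Psi\,\d\vartheta\geq 1$ for every minimizer $\vartheta\in\mathfrak m_0$; since $\Psi(\xi)\leq 1$ on $\X$ by definition of \eqref{eq-space-X} and of $\Psi$ in \eqref{Psi-def}, this immediately forces $\Psi=1$ $\vartheta$-almost surely. The idea is to reuse, now at the level of $\vartheta$ rather than of a single $\xi$, the two ingredients that already appear inside the proof of Theorem \ref{thm 4.9.}: the pointwise inequality $\int_0^T\mathscr I_\Phi(\Pi_t\delta_\xi)\,\d t \geq \E[\log \overline{\mathscr F}_T(\xi)]$ from \eqref{inequality between scr I and log}, and the Jensen chain that culminated in \eqref{using concavity} followed by the logarithmic Jensen step for $\E\log(\mathscr F_T(\xi)/\Psi(\xi))$.

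First I would integrate the pointwise bound against $\vartheta$ and invoke the invariance $\Pi_t\vartheta=\vartheta$ to obtain
\[
T\,\mathscr I_\Phi(\vartheta)\;=\;\int_0^T\mathscr I_\Phi(\Pi_t\vartheta)\,\d t \;\geq\; \int_{\X} \E\bigl[\log \overline{\mathscr F}_T(\xi)\bigr]\,\vartheta(\d\xi).
\]
Next, the two successive Jensen steps of Theorem \ref{thm 4.9.} combine into the pointwise bound
\[
\E\bigl[\log \overline{\mathscr F}_T(\xi)\bigr] \;\geq\; \Psi(\xi)\,\E[\log Z_T] \,+\, (1-\Psi(\xi))\log \E[Z_T],
\]
valid for all $\xi\in\X$ (the degenerate case $\Psi(\xi)=0$ gives equality since then $\overline{\mathscr F}_T(\xi)=\E[Z_T]$). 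Integrating this against $\vartheta$, dividing by $T$, and letting $T\to\infty$, using $\log\E[Z_T]=\tfrac{\beta^2 T}{2}V(0)$ together with $\tfrac{1}{T}\E[\log Z_T]\to \tfrac{\beta^2}{2}V(0)-\Lambda(\beta)$ (which is equivalent to the definition \eqref{beta_1} via the relation $\log\mathscr Z_{\beta,T}=\log Z_T-\tfrac{\beta^2 T}{2}V(0)$), yields
\[
\mathscr I_\Phi(\vartheta) \;\geq\; \tfrac{\beta^2}{2}V(0) - \Lambda(\beta)\int_{\X} \Psi\,\d\vartheta.
\]

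Finally, Theorem \ref{thm 4.9.} together with the same identification gives $\mathscr I_\Phi(\vartheta)=\inf_{\mathfrak m}\mathscr I_\Phi=\tfrac{\beta^2}{2}V(0)-\Lambda(\beta)$ for any $\vartheta\in\mathfrak m_0$; plugging this into the previous display, cancelling $\tfrac{\beta^2}{2}V(0)$, and using the hypothesis $\Lambda(\beta)>0$ then forces $\int_{\X}\Psi\,\d\vartheta\geq 1$, as claimed. I expect no serious new obstacles: all the hard analytic work (the metric compactification of $\X$, continuity of $\pi_t$, the It\^o decomposition of $\log \overline{\mathscr F}_T(\xi)$, and the variational representation itself) is already in place. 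The step that deserves the most care is verifying that the Jensen bound for $\E[\log\overline{\mathscr F}_T(\xi)]$ holds uniformly in $\xi$ so that one may legitimately integrate against $\vartheta$ before passing to the limit $T\to\infty$; the contradiction is then driven entirely by the strict gap $\Lambda(\beta)=\tfrac{1}{T}\log\E[Z_T]-\tfrac{1}{T}\E[\log Z_T]+o(1)$ between the annealed and quenched free energies, which is where the positivity hypothesis $\Lambda(\beta)>0$ enters in an essential way.
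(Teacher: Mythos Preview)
Your argument is correct and takes a genuinely different route from the paper. The paper first proves an auxiliary dichotomy (Lemma~\ref{Proposition 4.4}): for \emph{any} $\vartheta\in\mathfrak m$, the mass functional satisfies $\Psi\in\{0,1\}$ $\vartheta$-a.s., by a strict Jensen argument showing $\E[\Psi(\xi^{\ssup t})]<\Psi(\xi)$ whenever $\Psi(\xi)\in(0,1)$. It then argues that if $\vartheta\in\mathfrak m$ puts positive mass on $B=\{\Psi=1\}$ but $\vartheta(B)<1$, the conditional measure $\vartheta(\cdot\mid B)$ again lies in $\mathfrak m$ and has strictly smaller $\mathscr I_\Phi$-value (because $\Phi(\xi)<\Phi(\widetilde 0)$ for $\xi\neq\widetilde 0$), so $\vartheta\notin\mathfrak m_0$.

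Your approach bypasses the dichotomy lemma entirely: you integrate the already-established bound \eqref{inequality between scr I and log}--\eqref{using concavity} over $\vartheta$, use $\Pi_t\vartheta=\vartheta$ to collapse the left side to $T\,\mathscr I_\Phi(\vartheta)$, and extract the quantitative inequality $\mathscr I_\Phi(\vartheta)\geq \tfrac{\beta^2}{2}V(0)-\Lambda(\beta)\int\Psi\,\d\vartheta$. Combined with the minimizing value $\mathscr I_\Phi(\vartheta)=\tfrac{\beta^2}{2}V(0)-\Lambda(\beta)$ from Theorem~\ref{thm 4.9.}, this forces $\int\Psi\,\d\vartheta\geq 1$ directly. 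This is shorter and shows transparently that the strict positivity of $\Lambda(\beta)$ is exactly the leverage needed. What the paper's route buys is the extra structural information that $\Psi\in\{0,1\}$ a.s.\ holds for \emph{every} fixed point in $\mathfrak m$, not only for minimizers; your argument, by contrast, says nothing about non-minimizing $\vartheta\in\mathfrak m$. The only technical point worth recording explicitly in your write-up is that $\Phi$ is bounded on the compact space $\X$, so Fubini is justified when interchanging $\int_{\X}\vartheta(\d\xi)$ with $\int_0^T\d t$ on the left side.
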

For the proof of this theorem, we use the following lemma.
\begin{lemma}
	\label{Proposition 4.4}
	If $\vartheta\in\mathfrak m$, then $\vartheta\big[\xi\in\X:\Psi(\xi)=0\big] + \vartheta\big[\xi\in\X:\Psi(\xi)=1\big]=1$. 
\end{lemma}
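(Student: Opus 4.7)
The plan is to exploit the fixed point property $\Pi_t\vartheta=\vartheta$ together with a strict concavity argument applied to the total mass functional $\Psi$. Specifically, from \eqref{alpha after time t}, integrating $\alpha_i^{\ssup t}(\mathrm{d}x)$ over $\R^d$ and summing over $i\in I$ yields the key identity
$$
\Psi(\xi^{\ssup t})=\frac{\mathscr F_t(\xi)}{\mathscr F_t(\xi)+(1-\Psi(\xi))\,\E[Z_t]},
$$
where we used $\E[\mathscr F_t(\xi)]=\Psi(\xi)\E[Z_t]$. The map $x\mapsto x/(x+c)$ is strictly concave on $(0,\infty)$ for every $c>0$. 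Taking conditional expectation with respect to $\xi$ (i.e.\ averaging over the noise $B$) and applying Jensen's inequality with $c=(1-\Psi(\xi))\E[Z_t]$ gives
$$
\E\big[\Psi(\xi^{\ssup t})\,\big|\,\xi\big]\,\leq\,\frac{\E[\mathscr F_t(\xi)\,|\,\xi]}{\E[\mathscr F_t(\xi)\,|\,\xi]+(1-\Psi(\xi))\E[Z_t]}=\Psi(\xi),
$$
with equality, when $\Psi(\xi)\in(0,1)$, if and only if $\mathscr F_t(\xi)$ is $\P$-a.s.\ constant.

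The central technical step is to verify that this equality fails whenever $\Psi(\xi)\in(0,1)$, for which it suffices to establish $\mathrm{Var}_\P(\mathscr F_t(\xi))>0$. A direct Gaussian computation (using that $\beta\mathscr H_t(W^{\ssup 1},B)+\beta\mathscr H_t(W^{\ssup 2},B)$ is Gaussian of variance $\beta^2(2tV(0)+2\int_0^t V(W^{\ssup 1}_s-W^{\ssup 2}_s)\d s)$) yields
$$
\mathrm{Cov}\big(Z_t[z_1],Z_t[z_2]\big)=\E[Z_t]^2\,\Big(\bE^{\otimes}_{(z_1,z_2)}\!\big[\exp\big\{\beta^2\!\int_0^t\! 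V(W_s^{\ssup 1}-W_s^{\ssup 2})\,\d s\big\}\big]-1\Big)\,>\,0,
$$
for all $z_1,z_2\in\R^d$, since $V\geq 0$ and $V(0)>0$ makes the inner exponential strictly greater than $1$ on an event of positive $\bP^{\otimes}$-probability. If $\Psi(\xi)>0$, there exists $i$ with $\alpha_i(\R^d)>0$, so in $\mathrm{Var}(\mathscr F_t(\xi))=\sum_{i,j}\iint\mathrm{Cov}(Z_t[z_1],Z_t[z_2])\,\alpha_i(\d z_1)\alpha_j(\d z_2)$ the $i=j$ diagonal term alone is strictly positive, and hence $\mathscr F_t(\xi)$ is non-degenerate as claimed.

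Combining the two steps, the inequality $\E[\Psi(\xi^{\ssup t})\,|\,\xi]\leq\Psi(\xi)$ holds for every $\xi\in\X$ with strict inequality on $\{0<\Psi<1\}$. The invariance $\Pi_t\vartheta=\vartheta$ applied to the bounded Borel (lower semicontinuous by Lemma \ref{lower semi-cont}) functional $\Psi$ reads $\int\Psi\,\d\vartheta=\int\E[\Psi(\xi^{\ssup t})\,|\,\xi]\,\vartheta(\d\xi)$, so
$$
\int_{\X}\big(\Psi(\xi)-\E[\Psi(\xi^{\ssup t})\,|\,\xi]\big)\,\vartheta(\d\xi)=0.
$$
As the integrand is nonnegative everywhere and strictly positive on $\{0<\Psi<1\}$, we conclude $\vartheta(\{0<\Psi<1\})=0$, which is precisely the claim. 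The main obstacle is the strict positivity of $\mathrm{Var}_\P(\mathscr F_t(\xi))$, but the explicit Gaussian covariance identity above resolves it; everything else is standard Jensen/invariance bookkeeping.
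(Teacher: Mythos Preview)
Your proof is correct and follows essentially the same strategy as the paper: both derive the identity $\Psi(\xi^{\ssup t})=\mathscr F_t(\xi)/(\mathscr F_t(\xi)+(1-\Psi(\xi))\E[Z_t])$, apply Jensen's inequality for the strictly concave map $x\mapsto x/(x+c)$, and use invariance $\Pi_t\vartheta=\vartheta$ to force $\vartheta(\{0<\Psi<1\})=0$. The only difference is that where the paper simply invokes ``strict concavity and nondegeneracy of $\P$'' for the strict inequality, you supply an explicit Gaussian covariance computation showing $\mathrm{Var}_\P(\mathscr F_t(\xi))>0$ whenever $\Psi(\xi)>0$; this is a welcome elaboration rather than a different argument.
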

%\textcolor{blue}{\textbf{Absolutely the same as Proposition 4.4 in \cite{BS2016}}}
\begin{proof}
	Suppose $\xi\in \X$ such that $\Psi(\xi)\in (0,1)$. Recall the definition of $\xi^{\ssup t}=\{\alpha_i^{\ssup t}\}_i$, from \eqref{alpha after time t}, and note that $\E[Z_t] \geq \E[\mathscr F_t(\xi)]$. Then applying Jensen's inequality to the strictly concave function $x\mapsto \frac x {x+ \E[Z_t- \mathscr F_t(\xi)]}$ we have for any $t>0$,

\begin{align}%\label{strict}
%\begin{aligned}
		\E \big[\Psi(\xi^{\ssup t})\big]&=\E\bigg[\frac{\sum_{i \in I}\int_{\R^d}\int_{\R^d}\alpha_i(\mathrm{d}z)\bE_z\big[\1_{\{W_t\in\mathrm{d}x\}}\,\e^{\beta\int_0^t\int_{\R^d}\phi(y-W_s)\dot{B}(s,y)\d y \mathrm{d}s}\big]}{\mathscr F_t(\xi)+\E[Z_t-\mathscr F_t(\xi)]}\bigg]\nonumber\\
		&<\frac{\E[\mathscr F_t(\xi)]}{\E[\mathscr F_t(\xi)]+\E[Z_t-\mathscr F_t(\xi)]}\label{strict}\\
		&=\sum_{i \in I}\int_{\R^d}\int_{\R^d}\alpha_i(\mathrm{d}z)\bP_z(W_t\in\mathrm{d}x)\nonumber
		%&=\sum_{i \in I}\int_{\R^d}\alpha_i(\mathrm{d}z)\int_{\R^d}\bP_0(W_t\in\mathrm{d}(x-z))\\
		=\sum_i\alpha_i(\R^d)= \Psi(\xi).\nonumber
	\end{align}
	We remark that the inequality \eqref{strict} is strict because of strict concavity and nondegeneracy of $\P$. 
	Now let $\vartheta\in \mathfrak m\subset \Mcal_1(\X)$ be such that $\vartheta\big[ \xi \colon \Psi(\xi)\in (0,1)\big]>0$. Then by the strict upper bound \eqref{strict}, for any $t>0$, 
	$
	\int \Psi(\xi^\prime) \, \Pi_t(\vartheta, \d\xi^\prime)= \int \vartheta(\d\xi) \, \E[\Psi(\xi^{\ssup t})]  < \int \vartheta(\d\xi) \, \Psi(\xi)
	$, and since $\Pi_t \vartheta= \vartheta$ for any $t\geq 0$, we have a contradiction. To complete the proof of the lemma, note that for any $\xi\in \X$ with $\Psi(\xi)=0$ implies $\Psi(\xi^{\ssup t})=0$ and $\xi\in \X$ with $\Psi(\xi)=1$ implies $\Psi(\xi^{\ssup t})=1$.
	
		%That is $\E \left[\Psi(\xi^{(t)})\right]<\Psi(\xi)$.\\
	
\end{proof}
We will now provide the proof of Theorem \ref{NoDisint}.\\

\noindent{\bf{Proof of Theorem \ref{NoDisint}:}} Recall that $\delta_{\widetilde 0} \in \mathfrak m$. Suppose $\mathfrak m=\{\delta_{\widetilde 0}\}$. Then by  Theorem \ref{thm 4.9.}, 
	$$\lim_{T\rightarrow\infty}\E\left[\frac{\log Z_T}{T}\right]= \mathscr I_{\Phi}(\delta_{\widetilde 0})=\frac{\beta^2}{2}V(0)=\frac{\log\left(\E Z_T\right)}{T}$$
	which implies that  $\Lambda(\beta)=0$ (recall \eqref{beta_1}). But our assumption $\beta >\beta_1=\inf\{\beta>0\colon \Lambda(\beta)>0\}$ provides a contradiction. 
	Hence, there exists $\vartheta\in \mathfrak m$ such that $\vartheta\neq \delta_{\widetilde 0}$. 
	Lemma \ref{Proposition 4.4} guarantees that $\vartheta(B)>0$ with $B=\{\xi\in \X\colon \Psi(\xi)=1\}$. We will show that if $\vartheta(B)<1$, then $\vartheta \notin \mathfrak m_0$. 
	
	Note that $\xi^{\ssup t} \in B$ if and only if $\xi \in B$, and hence for any $A\subset \X$, 

	$$
	\pi_t(\xi,A)=\pi_t(\xi,A\cap B)\quad\text{for }\xi\in B \qquad\mbox{\and}\quad
	\pi_t(\xi,A\cap B)=0\quad\text{for}\,\, \xi\notin B.
	$$
	Using these two identities and with $\vartheta(\cdot | B)$ denoting the conditional probability on $\X$, 
	\begin{align*}
		\Pi_t(\vartheta(\cdot|B),A)=\int_{\X}\pi_t(\xi,A)\vartheta(\mathrm{d}\xi|B) 
		&=\frac{1}{\vartheta(B)}\int_B\pi_t(\xi,A)\vartheta(\mathrm{d}\xi)\\
		&=\frac{1}{\vartheta(B)}\left(\int_{B}\pi_t(\xi,A\cap B)\vartheta(\mathrm{d}\xi)+\int_{B^{\mathsf{c}}}\pi_t(\xi,A\cap B)\vartheta(\mathrm{d}\xi)\right)\\
		&=\frac{1}{\vartheta(B)}\int_{\X}\pi_t(\xi,A\cap B)\vartheta(\mathrm{d}\xi)\\
		&=\frac{1}{\vartheta(B)}\Pi_t(\vartheta,A\cap B).
	\end{align*}	
	Hence,  $\vartheta\in\mathfrak m$ implies $\vartheta (\cdot|B) \in\mathfrak m$. Let us assume that $\vartheta(B)<1$. Then we will show that
	$\mathscr I_{\Phi}[\vartheta(\cdot |B)]<\mathscr I_{\Phi}(\vartheta)$, which in turn would imply that $\vartheta\notin\mathfrak{m}_0$ giving us a contradiction. 
	
	Recall that the map $\Phi$ is continuous and $\Phi(\widetilde 0)= \beta^2 V(0)/2$. Then if $\xi\neq \widetilde 0$, 
	$$ \Phi(\xi)=\frac{\beta^2}{2}V(0)\left(1-\frac{1}{V(0)}\sum_{i\in I}\int_{\R^d\times\R^d}V(x_2-x_1)\prod_{j=1}^2\alpha_i(\mathrm{d}x_j)\right)<\frac{\beta^2}{2}V(0)=\Phi(\widetilde 0)$$
	and hence, 
	\begin{align}
		\mathscr I_{\Phi}[\vartheta(\cdot |B)]=\frac{1}{\vartheta(B)}\int_{B} \Phi(\xi)\vartheta(\mathrm{d}\xi)
		&=\int_{B} \Phi(\xi)\vartheta(\mathrm{d}\xi)+\frac{1-\vartheta(B)}{\vartheta(B)}\int_{B} \Phi(\xi)\vartheta(\mathrm{d}\xi)\nonumber\\
		&<\int_{B} \Phi(\xi)\vartheta(\mathrm{d}\xi)+(1-\vartheta(B))\Phi(\widetilde 0)\nonumber\\
		&=\int_{B} \Phi(\xi)\vartheta(\mathrm{d}\xi)+\int_{B^{\mathsf{c}}} \Phi(\xi)\vartheta(\mathrm{d}\xi)\label{Lemma 4.4}\\
		&=\mathscr I_{\Phi}(\vartheta)\nonumber
	\end{align}
	and we used Lemma \ref{Proposition 4.4} in the identity \eqref{Lemma 4.4}. We conclude that $\vartheta$ can only be an element of $\mathfrak {m}_0$, if $\vartheta(B)=1$.
\qed

\medskip

\noindent{\bf{Step 2:}} We will now conclude the following.

\noindent{\bf{Proof of  Theorem \ref{APA}}}: Recall from Section \ref{subsec-total-mass} the  functional $\Psi_\eps$ on $\X$ and the associated lower semicontinuous integral functional $\mathscr I_{\Psi_\eps}(\vartheta)=\int \Psi_\eps(\vartheta) \vartheta(\d\xi)$ on $\Mcal_1(\X)$. 
	For any $\xi\in\X$ with $\Psi(\xi)=1$, $\Psi_{\eps}(\xi)\nearrow 1$ for $\eps\rightarrow 0$. Since we assume $\Lambda(\beta)>0$, Theorem \ref{NoDisint} and monotone convergence theorem imply that
	$$\mathscr I_{\Psi_{\eps}}(\vartheta)\nearrow 1$$ pointwise for any $\vartheta\in \mathfrak m_0$. 
	Since $\mathfrak m_0$ is compact this pointwise convergence is in fact uniform. Thus, for any $m \in (0,1)$, there exists $\eps>0$  such that $\mathscr I_{\Psi_{\eps}}(\vartheta)>m$ for all $\vartheta\in\mathfrak {m}_0$.
	By compactness of $\mathcal{M}_1(\X)$ and lower semicontinuity of $\mathscr I_{\Psi_{\eps}}$, for any such $m\in (0,1)$ and $\eps>0$, we can find $\delta>0$ such that for any $\mu\in \Mcal_1(\X)$, 
	$\mathscr {W}(\mu,\mathfrak {m}_0)<\delta$ implies $\mathscr I_{\Psi_{\eps}}(\mu)>m$. 
	Thus, for any given $m\in (0,1)$ we can choose $\eps>0$ and $\delta>0$ such that $\mathscr I_{\Psi_{\eps}}(\vartheta)>m$ for all $\vartheta\in\mathfrak{m}_0$ and $\mathscr I_{\Psi_{\eps}}(\mu)>m$ for $\mu\in \mathcal{M}_1(\X)$ and so by Corollary \ref{convergence to M} there is a.s. $T^{\ast}$ large enough that
	$$T\geq T^{\ast}\Rightarrow \mathscr {W}(\nu_T,\mathfrak m_0)<\delta\Rightarrow \mathscr I_{\Psi_{\eps}}(\nu_T)>m.
	$$
	Now if we recall the relation \eqref{APA-psi-eps}, we have shown that, given any $m\in (0,1)$, there exists $\eps>0$ such that 
	$$
	\liminf_{T\rightarrow\infty}\frac{1}{T}\int_0^T\hP_t(W_t\in U_{t,\eps})\mathrm{d}t>m\quad\text{a.s.}
	$$
	
Note that the last display also implies the proof of Theorem \ref{APA}. Indeed, given any $k\in \N$, assume that the last assertion holds for some $\eps>0$ and $m\in(\tau , 1)$ with $\tau= 1- \frac 1 k$. Then we choose $T_1, T_2, T_3$ such that for $T>T_1$, we have $\int_0^T \hP_t(W_t\in U_{t,\eps})\mathrm{d}t>m T$, 
and for $t\geq T_2$ we have $\eps_t<\eps$ and so $\hP_t(W_t\in U_{t,\eps})\geq \hP_t(W_t\in U_{t,\eps})$, and for $T_3>T_2$ we have $m-\frac{T_2}{T_3}>\tau$. Now we conclude for $T\geq \max\{T_1,T_3\}$:
	\begin{align*}
	\frac{1}{T}\int_0^T\hP_t(W_t\in U_{t,\eps_t})\mathrm{d}t \geq\frac{1}{T}\int_{T_2}^T\hP_t(W_t\in U_{t,\eps_t})\mathrm{d}t
	&\geq\frac{T_2}{T}+\frac{1}{T}\int_{T_2}^T\hP_t(W_t\in {U}_{t,\eps_t} )\mathrm{d}t-\frac{T_2}{T_3}\\
	&\geq \frac{1}{T}\int_0^{T_2}\hP_t(W_t\in {U}_{t,\eps_t})\mathrm{d}t+\frac{1}{T}\int_{T_2}^T\hP_t(W_t\in U_{t,\eps_t})\mathrm{d}t-\frac{T_2}{T_3}\\
	&\geq \frac{1}{T}\int_0^{T}\hP_t(W_t\in {U}_{t,\eps_t})\mathrm{d}t-\frac{T_2}{T_3}\\
	&> m-\frac{T_2}{T_3}>\tau,
	\end{align*}
	which completes the proof of Theorem \ref{APA}. 
	%Now set $T^{\ast}:=\max\{T_1,T_3\}$, then the last calculations yield that for all $k\in\N$, there is $T^{\ast}$, such that
	%$$\frac{1}{T}\int_0^T\hP_t(W_t\in\mathcal{U}_t^{\eps_t})\mathrm{d}t\geq 1-\frac{1}{k}$$
	%for $T\geq T^{\ast}$ and so
	%$$\liminf_{T\rightarrow\infty}\frac{1}{T}\int_0^T\hP_t(W_t\in\mathcal{U}_t^{\eps_t})\mathrm{d}t\geq 1-\frac{1}{k}\quad\text{a.s. for all }k\in\N.$$
	%This finally implies
	%$$\lim_{T\rightarrow\infty}\frac{1}{T}\int_0^T\hP_t(W_t\in\mathcal{U}_t^{\eps_t})\mathrm{d}t=1\quad\text{a.s.}$$

%The rest of this section is devoted to the prove of the assumptions made in Theorem \ref{APA Equiv}. Afterwards that theorem will give us the asymptotic pure atomicity.

\appendix

\section{}\label{sec-appendix} 

Recall that in the proof of Theorem \ref{APA}, we needed $\Lambda(\beta)= -\lim_{T\to\infty} \frac 1 T \E[\log \mathscr Z_{\beta,T}]>0$ to use Theorem \ref{NoDisint}. 
The following monotonicity result for $\Lambda(\beta)$ was originally derived in \cite{CY06} for discrete directed polymers. 
%In particular, Theorem \ref{Lyapunov} also ensures that the localization assertions in Theorem \ref{APA} and Corollary \ref{cor-APA} hold (possibly deep) inside the strong disorder regime. 

\begin{theorem}
	\label{Lyapunov}
	The Lyapunov exponent 
	$$
	\Lambda(\beta)= -\lim_{T\to\infty} \frac 1 T \E[\log \mathscr Z_{\beta,T}]
	$$
	exists and is nonnegative. Furthermore, the map $\beta\mapsto \Lambda(\beta)$ is nondecreasing and continuous in $(0,\infty)$ and $\Lambda(0)=0$. Finally, $\Lambda(\beta)>0$ implies that 
	$\lim_{T\to\infty} \mathscr Z_{\beta,T}=0$ almost surely. 
	%In particular, there exists $\beta_1
	%is nondecreasing in $\beta\in[0,\infty)$. In particular
	%\begin{align}
		%\label{(3.13)}
		%\Lambda(\beta)\begin{cases}
			%=0,&\quad\text{if }\beta\in[0,\beta_1]\\
			%>0,&\quad\text{if }\beta\in[0,\infty)\setminus[0,\beta_1]
		%\end{cases}
	%\end{align}
	%and
	%\begin{align}
		%\label{(3.12)}
		%\beta_0\leq \beta_1\leq\infty.
	%\end{align}
\end{theorem}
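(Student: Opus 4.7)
The plan is to verify the four assertions—existence and nonnegativity of $\Lambda(\beta)$ with $\Lambda(0)=0$, monotonicity and continuity on $(0,\infty)$, and the a.s.\ vanishing of $\mathscr Z_{\beta,T}$ when $\Lambda(\beta)>0$—largely by adapting the Comets--Yoshida approach \cite{CY06} for discrete directed polymers to the present continuous setting.

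For existence and nonnegativity I would exploit the cocycle structure of $\mathscr Z_{\beta,T}$. The Markov property of Brownian motion at time $T$ combined with the independence of the white noise $\dot B$ on $[0,T]\times\R^d$ and $[T,\infty)\times\R^d$ yields
\begin{equation*}
  \mathscr Z_{\beta,T+S}=\mathscr Z_{\beta,T}\int_{\R^d}\widetilde{\mathscr Z}^{\ssup{T}}_{\beta,S}(x)\,\bQ_{\beta,T}(\mathrm dx),
\end{equation*}
where $\widetilde{\mathscr Z}^{\ssup{T}}_{\beta,S}(x)$ denotes the partition function starting at $x$ and driven by the time-shifted noise $\dot B(T+\cdot,\cdot)$. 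Taking logs, invoking Jensen's inequality on the right-hand integral, and then taking expectations—using independence of $\bQ_{\beta,T}$ (which is $\mathcal F_T$-measurable) and $\widetilde{\mathscr Z}^{\ssup{T}}_{\beta,S}$ together with spatial translation invariance of $\dot B$ giving $\widetilde{\mathscr Z}^{\ssup{T}}_{\beta,S}(x)\overset{d}{=}\mathscr Z_{\beta,S}$—I obtain the superadditivity $\E[\log\mathscr Z_{\beta,T+S}]\geq \E[\log\mathscr Z_{\beta,T}]+\E[\log\mathscr Z_{\beta,S}]$. Hence the limit $\Lambda(\beta)=\inf_T(-\E[\log\mathscr Z_{\beta,T}]/T)$ exists, and Jensen $\E[\log\mathscr Z_{\beta,T}]\leq\log\E[\mathscr Z_{\beta,T}]=0$ yields $\Lambda(\beta)\geq 0$, while $\mathscr Z_{0,T}\equiv 1$ gives $\Lambda(0)=0$.

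For monotonicity I would apply Kahane's comparison \eqref{comparison} to the scaled fields $\beta_i\mathscr H_T$, whose covariances $\beta_i^2\int_0^T V(W_s^{\ssup{1}}-W_s^{\ssup{2}})\,\mathrm ds$ are monotone in $\beta_i^2$. The concave functional $F=\log$ (handled via the standard truncation $F_\delta(x)=\log(x+\delta)$ and monotone convergence as $\delta\downarrow 0$, to comply with the polynomial growth hypothesis) yields $\E[\log\mathscr Z_{\beta_1,T}]\geq \E[\log\mathscr Z_{\beta_2,T}]$ for $\beta_1\leq\beta_2$, hence $\Lambda(\beta_1)\leq\Lambda(\beta_2)$. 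For continuity, the key observation is that
\begin{equation*}
  \log\mathscr Z_{\beta,T}(\omega)+\tfrac{\beta^2T}{2}V(0)=\log\bE_0\bigl[\exp\{\beta\mathscr H_T(W)\}\bigr]
\end{equation*}
is the cumulant generating function of $\mathscr H_T(W)$ under $\bP_0$, hence convex in $\beta$ for every realization of the noise. Averaging, dividing by $T$, and sending $T\to\infty$ shows that $\beta\mapsto -\Lambda(\beta)+\tfrac{\beta^2}{2}V(0)$ is a pointwise limit of convex functions, hence convex on $(0,\infty)$; convexity on an open interval implies continuity there, so $\Lambda$ is continuous on $(0,\infty)$.

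For the final assertion, the cocycle identity combined with $\E[\widetilde{\mathscr Z}^{\ssup{T}}_{\beta,S}(x)]=1$ identifies $(\mathscr Z_{\beta,T})_{T\geq 0}$ as a nonnegative $(\mathcal F_T)$-martingale of mean one, where $\mathcal F_T=\sigma(\dot B|_{[0,T]\times\R^d})$. Doob's theorem gives $\mathscr Z_{\beta,T}\to\mathscr Z_{\beta,\infty}$ a.s., and a standard Kolmogorov $0$--$1$ argument (casting $\{\mathscr Z_{\beta,\infty}=0\}$ as a tail event via the cocycle decomposition) shows this limit is either identically $0$ a.s.\ or strictly positive a.s. If $\mathscr Z_{\beta,\infty}>0$ a.s., then $T^{-1}\log\mathscr Z_{\beta,T}\to 0$ a.s.; combined with the Gaussian concentration bound $T^{-1}(\log\mathscr Z_{\beta,T}-\E[\log\mathscr Z_{\beta,T}])\to 0$ a.s.\ supplied by Theorem \ref{theoremF}, this forces $\Lambda(\beta)=0$, contradicting the hypothesis $\Lambda(\beta)>0$. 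The main technical delicacies will be (a) the $0$--$1$ step, where tail measurability of $\{\mathscr Z_{\beta,\infty}=0\}$ for the cylindrical noise requires some care, and (b) the passage from almost sure convergence of $\log\mathscr Z_{\beta,T}$ to convergence of its mean, for which Jensen's inequality only yields the upper bound while Gaussian concentration is essential for the matching lower bound.
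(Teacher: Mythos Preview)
Your proof is correct, but for the monotonicity of $\Lambda$ you take a genuinely different route from the paper. The paper follows \cite{CY06} and shows $\E\big[\partial_\beta\log\mathscr Z_{\beta,T}\big]\leq 0$ directly: it first justifies the interchange $\partial_\beta\E[\log\mathscr Z_{\beta,T}]=\E[\partial_\beta\log\mathscr Z_{\beta,T}]$ via a dominated-convergence argument, and then obtains the sign from the FKG inequality applied to the tilted measure $\tfrac{\d\mathscr M_{\beta,T}}{\d\bP_0}\,\d\P$ (the maps $\dot B\mapsto\mathscr H_T(W,B)-\beta TV(0)$ and $\dot B\mapsto -\mathscr Z_{\beta,T}^{-1}$ being nondecreasing). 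Your use of Kahane's comparison \eqref{comparison} with the fields $\beta_1\mathscr H_T$ and $\beta_2\mathscr H_T$ is cleaner here: since $V\geq 0$ the covariance ordering is immediate, and you avoid both the differentiation-under-the-expectation step and the FKG machinery. The cost is the mild truncation $F_\delta(x)=\log(x+\delta)$ to meet the growth hypothesis, which is harmless. The existence/subadditivity and convexity-implies-continuity arguments are essentially the same in both proofs (the paper states that $\Lambda$ itself is convex via H\"older; your version, that $\beta\mapsto \tfrac{\beta^2}{2}V(0)-\Lambda(\beta)$ is convex, is what H\"older actually yields and suffices for continuity). For the final assertion both proofs invoke the $0$--$1$ law; you additionally appeal to the concentration bound of Theorem~\ref{theoremF} to pass from a.s.\ convergence of $T^{-1}\log\mathscr Z_{\beta,T}$ to convergence of its mean, which makes explicit a step the paper's argument leaves rather terse.
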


\begin{proof}%[Proof of Theorem \ref{Lyapunov}]
The existence of the Lyapunov exponent is a consequence of a subadditivity argument (see \cite[Proposition 1.4]{CH02}), and the nonnegativity follows from a direct application of Jensen's inequality.

%We will now closely follow the proof of \cite[Theorem 3.2. (b)]{CY06} in order to prove the monotonicity of $\beta\mapsto \Lambda(\beta)$. 
We first want to show that 

\begin{equation}\label{claim-appendix}
\E\bigg[\frac{\partial}{\partial\beta}\log\mathscr Z_{\beta,T}\bigg]\leq 0\qquad\mbox{ for all }\,\,\,\, \beta\in (0,\infty).
\end{equation}
 Therefore, fix $\beta^\ast\in(0,\infty)$ and set $I=[0,\beta^\ast]$. We apply Jensen's inequality to
get $\E[\sup_{\beta\in I} \mathscr Z_{\beta,T}^{-2}] <\infty$. Next, recall the GMC measure $\mathscr M_{\beta,T}$ from \eqref{eq:M} and note that 
$$
\frac{\partial}{\partial\beta} \mathscr Z_{\beta,T} = \bE_0\bigg[\bigg(\mathscr H_T(W,B)- \beta T V(0)\bigg) \frac {\d\mathscr M_{\beta,T}}{\d\bP_0}\bigg]. 
$$
We again apply Jensen's inequality followed by the Cauchy-Schwarz inequality 
to get 
$$
\E\bigg[\sup_{\beta\in I} \bigg(\frac{\partial \mathscr Z_{\beta,T}}{\partial\beta}\bigg)^2\bigg]<\infty.
$$
 Then we can use the Cauchy-Schwarz inequality once more to show 
	$$
	\E\bigg|\frac{\partial\log \mathscr Z_{\beta,T}}{\partial\beta}\bigg|=\E\bigg|\frac{1}{\mathscr Z_{\beta,T}}\frac{\partial\mathscr Z_{\beta,T}}{\partial \beta}\bigg|\leq\bigg(\E\bigg|\mathscr Z_{\beta,T}\bigg|^{-2}\E\bigg|\frac{\partial\mathscr Z_{\beta,T}}{\partial\beta}\bigg|^2\bigg)^{1/2},
	$$
	and thus, $\sup_{\beta\in I} \frac{\partial\log \mathscr Z_{\beta,T}}{\partial\beta} \in L^1(\P)$. Then we can conclude
	\begin{align}
	%\begin{aligned}
		\label{integral-derivative-change}
		\frac{\partial}{\partial\beta^\ast}\E[\log\mathscr Z_{\beta^\ast,T}]=\frac{\partial}{\partial\beta^\ast}\E[\log\mathscr Z_{0,T}]+\frac{\partial}{\partial\beta^\ast}\E\bigg[\int_0^{\beta^\ast}\frac{\partial\log\mathscr Z_{\beta,T}}{\partial\beta}\mathrm{d}\beta\bigg]&=\frac{\partial}{\partial\beta^\ast}\int_0^{\beta^\ast}\E\Big[\frac{\partial\log \mathscr Z_{\beta,T}}{\partial\beta}\Big]\mathrm{d}\beta\nonumber\\
		&=\E\Big[\frac{\partial}{\partial\beta^\ast}\log\mathscr Z_{\beta^\ast,T}\Big]
	%\end{split}
	\end{align}
	for all $\beta^\ast\in(0,\infty)$. We will use \eqref{integral-derivative-change} to show \eqref{claim-appendix}.

	Note that for any fixed $T, \beta$ and $W$, the maps $\dot B \mapsto \mathscr H_T(W,B)- \beta T V(0)$ and $\dot B \mapsto - {\mathscr Z_{\beta,T}}^{-1}$ are
	 nondecreasing (see \cite{B05}) and since the law $\P$ of the noise $\dot B$ is a product measure,
	 we use the FKG-inequality applied to the tilted measure $\frac{\mathrm{d}\mathscr M_{\beta,T}}{\mathrm{d}\bP_0}\mathrm{d}\P$  
	 to obtain 
	 \begin{equation}\label{FKG}
	 \E\bigg[-\frac{\partial}{\partial\beta}\log\mathscr Z_{\beta,T}\bigg]\geq\bE_0\bigg[\E\bigg[-\frac{1}{\mathscr Z_{\beta,T}}\frac{\mathrm{d}\mathscr M_{\beta,T}}{\mathrm{d}\bP_0}\bigg]\E\bigg[\frac{\partial}{\partial\beta}\frac{\mathrm{d}\mathscr M_{\beta,T}}{\mathrm{d}\bP_0}\bigg]\bigg].
	 \end{equation}
	  By calculations similar to \eqref{integral-derivative-change}, $\E[\frac{\partial}{\partial\beta}\frac{\mathrm{d}\mathscr M_{\beta,T}}{\mathrm{d}\bP_0}]=\frac{\partial}{\partial\beta}\E[\frac{\mathrm{d}\mathscr M_{\beta,T}}{\mathrm{d}\bP_0}]=0$, which combined with \eqref{FKG} then implies \eqref{claim-appendix} and the desired monotonicity of $\Lambda(\beta)$. The continuity of $\beta\mapsto \Lambda(\beta)$ on $(0,\infty)$ is an immediate 
	  consequence of its convexity which follows from H\"older's inequality. 
	
	Finally, to show that $\Lambda(\beta)>0$ implies $\lim_{T\rightarrow\infty}\mathscr Z_{\beta,T}=0$ $\P$-a.s., note that 
	$\mathcal{V}:=\{\mathscr Z_{\beta,T}\not\to_{T\to \infty} 0\}$  is a tail event for the process
%the natural filtration 
%\textcolor{red}{$\mathcal{G}_t$}
%induced by
$t\to \dot B(t,\cdot)$ and, therefore, 
$\P(\mathcal{V})\in \{0,1\}$. 
So if $\lim_{T\rightarrow\infty}\mathscr Z_{\beta,T}>0$ almost surely, since for $x>0$, $-\log(x)<\infty$,

	$$\Lambda(\beta)=\lim_{T\rightarrow\infty}\frac{1}{T}\E\left[-\log\mathscr Z_{\beta,T}\right]\leq 0,$$
	which provides a contradiction. %That means, the Lyapunov exponent can not be positive if $\beta$ is located in the high temperature case; thus $\beta > \beta_0$ for $\Lambda(\beta)>0$.
\end{proof}

\begin{theorem}\label{theoremF}
For any $d\geq 1$, $\beta>0$ and $\delta>0$, as $T\to\infty$,
\begin{equation}%\label{theoremF}
        \log Z_T-\E[\log Z_T]=O\big(T^{\frac{1+\delta}{2}}\big)\qquad\P-\text{a.s.}
        \end{equation}
        \end{theorem}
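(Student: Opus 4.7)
The strategy is to establish Gaussian concentration of measure for $\log Z_T$ via an almost-sure bound on its Malliavin derivative with respect to the driving white noise $\dot B$.

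Working in the Cameron-Martin Hilbert space $H = L^2([0,\infty)\times\R^d)$ of $\dot B$, one observes that for fixed path $W$ the quantity $\mathscr H_T(W,B)=\int_0^T\int_{\R^d}\phi(y-W_s)\dot B(s,y)\,\d y\,\d s$ is a Wiener integral in $\dot B$, so $D_{s,y}\mathscr H_T(W,B)=\phi(y-W_s)\1_{\{s\leq T\}}$. The chain rule applied to $Z_T=\bE_0[\exp(\beta\mathscr H_T(W,B))]$ yields
$$
D_{s,y}\log Z_T=\beta E^{\hP_T}\big[\phi(y-W_s)\big]\1_{\{s\leq T\}},
$$
and Jensen's inequality together with translation invariance of Lebesgue measure gives the \emph{almost-sure deterministic} bound
$$
\|D\log Z_T\|_H^2 \leq \beta^2\int_0^T E^{\hP_T}\bigg[\int_{\R^d}\phi(y-W_s)^2\,\d y\bigg]\,\d s = \beta^2 T V(0).
$$

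This pointwise bound on the Malliavin gradient feeds directly into the Borell-Sudakov-Tsirelson Gaussian concentration inequality (in its standard Malliavin-calculus formulation for functionals in $\mathbb{D}^{1,2}$ with almost-surely bounded derivative), yielding the sub-Gaussian tail estimate
$$
\P\big[|\log Z_T-\E[\log Z_T]|>t\big]\leq 2\exp\bigg(-\frac{t^2}{2\beta^2 T V(0)}\bigg).
$$
Specializing to $T=n\in\N$ and $t=n^{(1+\delta)/2}$, the right-hand side equals $2\exp(-n^\delta/(2\beta^2 V(0)))$, which is summable in $n$, so Borel-Cantelli gives $|\log Z_n-\E[\log Z_n]|\leq n^{(1+\delta)/2}$ eventually, $\P$-almost surely.

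To extend this bound from integers to all real $T$, write $T=n+r$ with $n=\lfloor T\rfloor$ and use Lemma \ref{Rewrite of the partition function} to express $\log Z_T-\log Z_n=(M_T-M_n)+\int_n^T\Phi(\widetilde\bQ_t)\,\d t$. Since $0\leq\Phi\leq \beta^2 V(0)/2$ on $\X$, the integral is an $O(1)$ error; and the quadratic-variation bound $\langle M\rangle_{n+1}-\langle M\rangle_n\leq \beta^2 V(0)$ combined with Burkholder-Davis-Gundy gives $\E[\sup_{T\in[n,n+1]}|M_T-M_n|^4]\leq C$, whence Borel-Cantelli again forces $\sup_{T\in[n,n+1]}|M_T-M_n|=o(n^{(1+\delta)/2})$ almost surely. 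The only real obstacle is verifying the almost-sure---as opposed to in-expectation---bound on $\|D\log Z_T\|_H$; once that is in hand, the concentration step and Borel-Cantelli are essentially routine.
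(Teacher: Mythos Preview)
Your argument is correct and complete. The Lipschitz bound $|\log Z_T(B+h)-\log Z_T(B)|\leq \beta\sqrt{TV(0)}\,\|h\|_H$ follows directly from your Malliavin computation (or, equivalently, from the pathwise bound $|\langle \phi(\cdot-W_\cdot)\1_{[0,T]},h\rangle|\leq \sqrt{TV(0)}\,\|h\|_H$), and it is indeed an almost-sure deterministic estimate, so the Borell--Sudakov--Tsirelson inequality applies without further justification. Your interpolation step is also fine: the integral term is $O(1)$ since $0\leq\Phi\leq\beta^2V(0)/2$, the martingale increment has uniformly bounded quadratic variation on unit intervals so BDG with any exponent $2p>2/\delta$ makes Borel--Cantelli work, and $\E[\log Z_T]-\E[\log Z_n]=\int_n^T\E[\Phi(\widetilde\bQ_t)]\,\d t=O(1)$ as well.

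The paper takes a different route: it simply cites Comets--Yoshida \cite{CY05}, where the analogous statement is proved for a Poissonian environment via a Doob-martingale decomposition of $\log Z_T$ with respect to the noise filtration and an exponential-martingale concentration bound (the function $\varphi(v)=\e^v-v-1$ encoding the Poisson cumulant is replaced by its Gaussian counterpart). Your approach is more direct for Gaussian noise: it exploits the Gaussian isoperimetric inequality in one stroke rather than rebuilding a martingale concentration argument, and it makes the constant in the sub-Gaussian tail completely explicit. The paper's adaptation of \cite{CY05} has the virtue of working uniformly across different noise classes (Poisson, Gaussian, bounded i.i.d.) with only the cumulant function changed, whereas your argument is specific to the Gaussian case---but for the present paper, that is all that is needed, and your proof is self-contained rather than a pointer to the literature.
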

        \begin{proof}
        This result  has been shown for a Poissonian environment in \cite[Theorem 2.4.1(b) and Corollary 2.4.2]{CY05}. The proof in our setting is a straightforward adaptation of this result modulo minor changes.
In particular, in the proof of \cite[Theorem 2.4.1(b)]{CY05}, 
the function $\varphi(v)=\e^v-v-1$  has to be replaced by $\varphi(v)=\frac{1}{2}v^2-v$, while the indicator function there has to be replaced by our fixed  mollifier $\phi$,  
and  the constant $C$ should be chosen to be $C=|B_{1/2}(0)| \big(\e^{\beta\|\phi\|_\infty}-1\big)^2$.
 \end{proof}

 \noindent{\bf{Acknowledgement.}} It is a pleasure to thank Sourav Chatterjee (Stanford) and Francis Comets (Paris) for their encouragement and inspiration. The authors are grateful to 
 Erik Bates (Stanford),  Sourav Chatterjee, Francis Comets and Vincent Vargas (Paris) for their valuable comments on an earlier draft, and to Yuri Bakhtin (NYU) and Donghyun Seo (NYU) for pointing out an error in the proof of 
Theorem 3.1 in the earlier version.

%\end{document}


\begin{thebibliography}{WWW98}
 \bibliographystyle{plain}
  


 
  
 
% \end{thebibliography}
 
% \end{document}
 

\bibitem[A13]{A13}
{\sc E. A\"id\'ekon.}
\newblock{Convergence in law of the minimum of a branching random walk.}
\newblock{Ann. Probab.}, {\bf 41}, (2013) 1362-1426. 

\smallskip



\bibitem[ACQ11]{ACQ11}
{\sc G. Amir}, {\sc I. Corwin} and {\sc J. Quastel},
\newblock{Probability distribution of the free energy of the continuum
directed random polymer in 1+1 dimensions.}
\newblock {Communications on pure and applied mathematics}  {\bf 64},
Issue 4, 466-537, 2011 

\smallskip


\bibitem[AKQ14]{AKQ}
{\sc T. Alberts}, {\sc K. Khanin} and {\sc J. Quastel},
\newblock{The continuum directed random polymer.}
\newblock{J. Stat. Phys.}
{\bf 154}, 305-326, 2014

\smallskip



\bibitem[B05]{B05}
{\sc D. Barbato},
\newblock{FKG inequality for Brownian motion and stochastic differential equations.}
\newblock{Electron. Comm. Probab.}, {\bf 10}, (2005)

\smallskip

\bibitem[Ba17]{Ba17}
{\sc E. Bates.}
\newblock{Localization of directed polymers with general reference walk }
\newblock{Electron. J. Probab.}, {\bf 23}, (2018)

\smallskip


 \bibitem[BC16]{BC16}
  {\sc E. Bates} and {S. Chatterjee.}
  \newblock{\it The endpoint distribution of directed polymers.}
  \newblock{\it Preprint}, arXiv:1612.03443, (2016)
  


\smallskip







\bibitem[B17]{B17}
{\sc N. Berestycki},
\newblock{An elementary approach to Gaussian multiplicative chaos.}
\newblock{Electron. Comm. Probab.}, {\bf 22}, (2017)

\smallskip




 
  





%\bibitem[B89]{B89}
%{\sc E. Bolthausen},
%\newblock{A note on the diffusion of directed polymers in a random environment.}
%\newblock{Commun. Math. Phys.} {\bf 123}, 529-534 (1989)

%\smallskip


\bibitem[BC95]{BC95}
{\sc L. Bertini} and {\sc N. Cancrini}.
\newblock{The stochastic heat equation: Feynman-Kac formula and intermittency.} 
\newblock{Journal of statistical Physics} {\bf 78}, Nos. 5/6, 1995

\smallskip


\bibitem[BC98]{BC98}
{\sc L. Bertini} and {\sc N. Cancrini}.
\newblock{The two-dimensional stochastic heat equation: renormalizing a multiplicative noise.}
\newblock{J. Phys. A: Math. Gen.} {\bf 31} 615, 1998

\smallskip


\bibitem[BG97]{BG97}
{\sc L. Bertini} and {G. Giacomin}.
\newblock{Stochastic Burgers and KPZ equations from particle systems.}
\newblock{Communications in Mathematical Physics} {\bf 183}, Issue 3, 
(1997), 571-607

\smallskip

\bibitem[BL18]{BL18}
{\sc M. Biskup} and {\sc O. Louidor.}
\newblock{Full extremal process, cluster law and freezing for two-dimensional discrete Gaussian free field.}
\newblock{Adv. Math.}, {\bf 330}, (2018), 589-687

\smallskip


\bibitem[BM19]{BM19}
{\sc Y. Br\"oker} and {\sc C. Mukherjee}.
\newblock{Quenched central limit theorem for the stochastic heat equation in weak disorder.}
\newblock{Probability and Analysis in Interacting Physical Systems}, (2019), 173-189

\smallskip

\bibitem[C18]{C18}
{\sc S. Chaterjee.}
\newblock{Proof of the path localization conjecture for directed polymers.}
\newblock{\it Preprint}, arXiv: 1806.04220

\smallskip


\bibitem[CSZ17]{CSZ17}
{\sc F. Caravenna}, {\sc R. Sun} and {\sc N. Zygouras}.
\newblock{Universality of marginally relevant disorder systems.}
\newblock{Ann. Appl. Probab.}, {\bf 27}, (2017), 3050-3112 

%\bibitem[BS05]{BS05}
%{\sc V. Betz} and {H. Spohn}
%\newblock{A central limit theorem for Gibbs measures relative to Brownian motion},
%\newblock{Prob. Theory. rel. Fields.}, {\bf 131}, Issue 3, (2005), 459-478

%\bibitem[Bo98]{Bogachev}
%{\sc V.I. Bogachev},
%\newblock{ Gaussian measures.}
%\newblock{Mathematical Surveys and Monographs} {\bf 62} (1998), American 
%Mathematical Society, Providence, RI.

\smallskip


\bibitem[CH02]{CH02}
{\sc P. Carmona} and {\sc Y. Hu}.
\newblock{On the partition function of the directed polymer in a Gaussian random environment.}
\newblock{Probab. Theory Related Fields.}, {\bf 124}, 431-457, (2002) 

\smallskip

%\bibitem[CH06]{CH06}
%{\sc P. Carmona} and {\sc Y. Hu}.
%\newblock{Strong disorder implies strong localization for directed polymer in a random environment.}
%%%\newblock{ALEA. Lat. Am. J. Probab. Math. Stat.}, {\bf 2}, (2006), 217-229


%\smallskip


%\bibitem[C18]{C18}
%{\sc C. Cosco},
%\newblock{The Intermediate Disorder Regime for Brownian Directed Polymers in Poisson Environment}, 
%\newblock{arXiv:1804.09571}, (2018)

%\smallskip



\bibitem[CC18]{CC18}
{\sc F. Comets} and {\sc C. Cosco},
\newblock{ Brownian Polymers in Poissonian Environment: a survey}, 
\newblock{arXiv:1805.10899}, (2018)

\smallskip


\bibitem[CCM18]{CCM18}
{\sc F. Comets}, {\sc C. Cosco} and {C. Mukherjee}, 
\newblock{Fluctuation and rate of convergence of the stochastic heat equation in weak disorder.}
\newblock{\it Preprint}, Available at: ArXiv: 1807.03902 

\smallskip


\bibitem[CCM19]{CCM19}
{\sc F. Comets}, {\sc C. Cosco} and {C. Mukherjee}, 
\newblock{Renormalizing the Kardar-Parisi-Zhang equation in $d\geq 3$ in weak disorder.}
\newblock{\it Preprint}, Available at: ArXiv: 1902.04104

\smallskip


\bibitem[CCM19-II]{CCM19-II}
{\sc F. Comets}, {\sc C. Cosco} and {C. Mukherjee}, 
\newblock{Gaussian fluctuation and rate of convergence of the Kardar-Parisi-Zhang equation in weak disorder in $d\geq 3$.}
\newblock{\it Preprint}, Available at: ArXiv: 1905.03200

\smallskip



 \bibitem[CC13]{CC13}
  {\sc F. Comets} and {\sc M. Cranston.}
   \newblock{Overlaps and pathwise localization in the Anderson polymer model.}
   \newblock{\it Stochastic Processes and their Applications}, {\bf 123},  2446-2471, (2013). 
  
 \smallskip 
 
  \bibitem[CLD01]{CLD01}
 {\sc D. Carpentier}, {\sc P. Le Doussal.}
 \newblock{Glass transition of a particle in a random potential, front selection in nonlinear RG and entropic phenomena in Liouville and Sinh-Gordon models,}
\newblock{Phys. Rev. E}, {\bf 63}:026110 (2001).

 \smallskip
 
 
 
 
 
  \bibitem[CN95]{CN95}
{\sc F. Comets} and {\sc J. Neveu},
\newblock{The Sherrington-Kirkpatrick model of spin glasses and stochastic calculus: the high temperature case},
\newblock{\it Comm. Math. Phys.} {\bf 166} (1995), 349-364


\smallskip 








\bibitem[CSY03]{CSY03}
  {\sc F. Comets}, {\sc T. Shiga} and {\sc N. Yoshida},
\newblock{Directed polymers in a random environment: path localization and strong disorder}
\newblock{Bernoulli},  
 {\bf 9}, 705--728, 2003

\smallskip

%\bibitem[CSY04]{CSY04}
 % {\sc F. Comets}, {\sc T. Shiga} and {\sc N. Yoshida},
%\newblock{Probabilistic analysis of directed polymers in a random environment: a review.}
%\newblock{Stochastic analysis on large scale interacting systems},  
%{Adv. Stud. Pure Math.} {\bf 39}, 115--142, 2004

%\smallskip


\bibitem[CY05]{CY05}
 {\sc F. Comets} and {\sc N. Yoshida},
\newblock{Brownian Directed Polymers in Random Environment.}
\newblock{Comm. Math. Phys.} {\bf 254}, 257-287, 2005.

\smallskip


\bibitem[CY06]{CY06}
 {\sc F. Comets} and {\sc N. Yoshida},
\newblock{Directed polymers in random environment are diffusive in weak disorder.}
\newblock{Ann. Probab.} {\bf 34}, 1746-1770, 2006.

    
  % doi:10.1214/15-AOP1065. https://projecteuclid.org/euclid.aop/1479114267
  
  
  
  
  
  
  
  
  
  
  
 

 

 
 
 
%  \bibitem {K1942} L. V. Kantorovich. On the translocation of masses. C. R. (Doklady) Acad. Sci. USSR, 321:199–201, 1942.
  
%  \bibitem[CY05]{CY05} 
%  {\sc F. Comets} and {N. Yoshida}
%  \newblock{Brownian Directed Polymers in Random Environment.}
%  \newblock{\it Communications in Mathematical Physics}, {\bf 254}, 257-287, (2005)


  
 % \smallskip
  
  
  
  
 % \bibitem[CY04]{CY04}
 % {\sc F. Comets} and {\sc N. Yoshida.}
 % \newblock{Directed Polymers in Random Environment Are Diffusive at Weak Disorder.}
%  \newblock{\it Ann. Probab.}, {\bf 34}, 1746-1770, (2006).
  
  \smallskip
  
  
 \bibitem[DS88]{DS88}
{\sc B. Derrida} and {\sc H. Spohn.}
 \newblock{Polymers on disordered trees, spin glasses and traveling waves,}
\newblock{J. Stat. Phys.}, {\bf 51}, 817-840 (1988).

\smallskip

  
  
  
  \bibitem[DRSV14-I]{DRSV14-I}
  {\sc B. Duplantier}, {\sc R. Rhodes}, {\sc S. Sheffield} and {\sc V. Vargas.}
  \newblock{Critical Gaussian multiplicative chaos: Convergence of the derivative martingale.}
  \newblock{\it Ann. Probab.}, {\bf 42}, (2014), 1769-1808
  
    \smallskip
    
 \bibitem[DRSV14-II]{DRSV14-II}
 {\sc B. Duplantier}, {\sc R. Rhodes}, {\sc S. Sheffield} and {\sc V. Vargas}
 \newblock{Renormalization of Critical Gaussian Multiplicative Chaos and KPZ Relation.}
 \newblock{\it Communications in Mathematical Physics}, {\bf  330}, (2014), pp 283-330
 
 \smallskip  
 
\bibitem[DS11]{DS11}
{\sc B. Duplantier} and {\sc S. Sheffield.} 
\newblock{Liouville quantum gravity and KPZ}
\newblock{Invent. Math.}, {\bf 185}, 333-393, (2011) 
 
 \smallskip 
 
\bibitem[FB08]{FB08}
{\sc Y. Fyodorov} and {\sc J. P. Bouchaud,}
\newblock{Freezing and extreme-value statistics in a random energy model with logarithmically correlated potential,}
\newblock{J. Phys. A}, {\bf 41}, (2008) 372001. 

\smallskip

\bibitem[FLDR09]{FLDR09}
{\sc Y. Fyodorov}, {\sc P. Le Doussal} and {\sc A. Rosso.}
\newblock{Statistical Mechanics of Logarithmic REM: Duality, Freezing and Extreme Value Statistics of $1/f$ Noises generated by Gaussian Free Fields,}
\newblock{J. Stat. Mech.}, (2009) P10005. 

\smallskip

 
 
 
    
  \bibitem[GP17]{GP17}
{\sc M. Gubinelli} and {\sc N. Perkowski}
\newblock{KPZ reloaded.}
\newblock{Comm. Math. Phys}, {\bf 349}, Issue 1, 165-269, (2017)

\smallskip

  
  
 \bibitem[H13]{H13}
{\sc M. Hairer},
\newblock{Solving the KPZ equation.}
\newblock{Annals of Mathematics} {\bf 178}, 559-664, (2013)

%\smallskip

  
%  \bibitem[IS88]{IS88}
%  {\sc J. Z. Imbrie} and {\sc T. Spencer},
%\newblock{Diffusion of directed polymers in a random environment.}
%\newblock{Journal of statistical Physics} {\bf 52}, nos. 3/4, 1988 

\smallskip

  
  
 % \bibitem{Malliavin Calculus} Nualart, D. "The Malliavin Calculus and Related Topics" Probability and its applications : a series of the applied probability trust, Springer-Verlag, 1995.
  
    
    
\bibitem[K85]{K85}
{\sc J.-P. Kahane},
\newblock{ Sur le chaos multiplicatif.}
\newblock{\it Ann. sc. math. Quebec} {\bf 9}, no. 2, 105-150 (1985).

\smallskip

\bibitem[M15]{M15}
{\sc T. Madaule.}
\newblock{Maximum of a log correlated Gaussian field.}
\newblock{Annales de l'Institut Henri Poincar\'e, Probabilit\'es et Statistiques.}, {\bf 51}, (2015), 1369-1431.

\smallskip




\bibitem[M74]{M74}
{\sc B. B. Mandelbrot.}
\newblock{Multiplications a\'leatoires it?e\'eees et distributions invariantes par moyenne pond\'e\'ree
a\'leatoire, I and II.}
\newblock{Comptes Rendus (Paris)}, {\bf 278A}, (1974), 289-292 and 355-358

\smallskip



 \bibitem[MRV16]{MRV16}
{\sc T. Madaule}, {\sc R. Rhodes} and {\sc V. Vargas},
\newblock{Glassy phase and freezing of log-correlated Gaussian potentials.}
\newblock{Ann. Appl. Probab.}, {\bf 26}, (2016), 643-690

\smallskip


 
 


\bibitem[MSZ16]{MSZ16}
{\sc C. Mukherjee}, {\sc A. Shamov} and {\sc O. Zeitouni}, 
\newblock{Weak and strong disorder for the stochastic heat equation and the continuous directed polymer in $d\geq 3$}
\newblock{Electr. Comm. Prob.}, {\bf 21}, (2016),  arXiv: 1601.01652

\smallskip


\bibitem[M17]{M17}
{\sc C. Mukherjee},  %and {\sc S. R. S. Varadhan}
\newblock{Central limit theorem for Gibbs measures on path spaces including long range and singular interactions and homogenization of the stochastic heat equation},
\newblock {Preprint},  arXiv: 1706.09345, (2017)

\smallskip
  
    \bibitem[MV14]{MV14}
  {\sc C. Mukherjee} and {\sc S.R.S.  Varadhan.}
   \newblock{Brownian occupation measures, compactness and large deviations.}
  \newblock{\it Ann. Probab.} {\bf 44}, 3934--3964, (2016), arXiv: 1404.5259
  
  \smallskip
  
  \bibitem[RV10]{RV10}
  {\sc R. Robert} and {\sc V. Vargas.}
  \newblock{Gaussian multiplicative chaos revisited}
  \newblock{Ann. Probab.} {\bf 38}, 605-631, (2010).  
  
%  \smallskip
  
  
%  \bibitem[RT04]{RT04}
%  {\sc C. Rovira} and {\sc S. Tindel.} 
%  \newblock{On the Brownian-directed polymer in a Gaussian random environment.}
%  \newblock{\it Journal of Functional Analysis}, {\bf 222}, 178-201, (2005). %ISSN 0022-1236, https://doi.org/10.1016/j.jfa.2004.07.017.
  
 \smallskip
  
  \bibitem[S14]{S14}
  {\sc A. Shamov},
  \newblock{ On Gaussian multiplicative chaos.}
  \newblock {arXiv:1407.4418} (2014).
  
  \smallskip
  
  \bibitem[SS10]{SS10}
{\sc T. Sasamoto} and {\sc H. Spohn}
\newblock{The one-dimensional KPZ equation: an exact solution and its universality}
\newblock{Phys. Rev. Lett.}, {\bf 104},  230602,  (2010)

\smallskip

  

\bibitem[V07]{V07}
{\sc V. Vargas}, 
\newblock{Strong localization and macroscopic atoms for directed polymers.}
\newblock{Probab. Th. Rel. Fields.}, {\bf 138}, 391-410, (2007)





  
 \end{thebibliography}
\end{document}